\documentclass[11pt]{amsart}
\usepackage{amssymb,amsmath,amsthm,mathrsfs,verbatim,ulem,cancel,color}
\usepackage[all, knot]{xy}
\usepackage{rotating}
\usepackage{cancel}
\usepackage{enumitem}
\oddsidemargin = 0cm \evensidemargin = 0cm \textwidth = 6.5in
%\addtolength{\oddsidemargin}{-1.0in}
%\addtolength{\evensidemargin}{-1.0in}
%\addtolength{\textwidth}{2.0in}

%\oddsidemargin=0.0cm \evensidemargin=0.0cm \textwidth=6.8in
%\oddsidemargin = 0.5cm \evensidemargin = 0.5cm \textwidth = 6.8in
%\oddsidemargin = 1.2cm \evensidemargin = 1.2cm \textwidth = 6.3in
\textheight =8.6in

\newtheorem{theorem}{Theorem}
\newtheorem{lemma}[theorem]{Lemma}

\newtheorem{proposition}[theorem]{Proposition}

\theoremstyle{remark}

\newtheorem*{remark}{Remark}
\newtheorem*{remarks}{Remarks}

\theoremstyle{definition}

\newtheorem*{definition*}{Definition}
\numberwithin{theorem}{section}
\numberwithin{equation}{section}
\numberwithin{figure}{section}

\newcommand{\R}{\mathbb{R}}
\newcommand{\C}{\mathbb{C}}

\newcommand{\Q}{\mathbb{Q}}

\newcommand{\Z}{\mathbb{Z}}
\newcommand{\N}{\mathbb{N}}

\newcommand{\SL}{{\text {\rm SL}}}
\newcommand{\PSL}{{\text {\rm PSL}}}

\newcommand{\sgn}{\operatorname{sgn}}

\newcommand{\im}{\textnormal{Im}}

\def\H{\mathbb{H}}

\renewcommand{\SS}{\mathbb{S}}

\allowdisplaybreaks
\begin{document}

\title[Ramanujan-like formuls for Fourier coefficients]{Ramanujan-like formulas for Fourier coefficients of all meromorphic cusp forms }

\author{Kathrin Bringmann} 
\address{Mathematical Institute\\University of
Cologne\\ Weyertal 86-90 \\ 50931 Cologne \\Germany}
\email{kbringma@math.uni-koeln.de}
\author{Ben Kane}
\address{Department of Mathematics\\ University of Hong Kong\\ Pokfulam, Hong Kong}
\email{bkane@maths.hku.hk}
\date{\today}
\thanks{The research of the first author was supported by the Alfried Krupp Prize for Young University Teachers of the Krupp foundation and the research leading to these results has received funding from the European Research Council under the European Union's Seventh Framework Programme (FP/2007-2013) / ERC Grant agreement n. 335220 - AQSER. The research of the second author was supported by grant project numbers 27300314, 17302515, and 17316416 of the Research Grants Council.  Part of this research was conducted while the second author was visiting Max--Planck Institute of mathematics in Bonn and the University of Cologne, which are both greatly thanked for their support and hospitality.}
\subjclass[2010] {11F11, 11F12, 11F30}
\keywords{meromorphic modular forms, quasi-meromorphic modular forms, Fourier coefficients, Ramanujan-type formulas, polar harmonic Maass forms, Poincar\'e series}

\begin{abstract}
%In this paper, we investigate two-variable meromorphic Poincar\'e series studied by Petersson in 1950.  At that time, he considered three types of Poincar\'e series, one of which was modular in both variables, one of which was meromorphic in both variables, and a third piece coming from the difference which did not seem to fit into a general framework.  By realizing identities between these functions as splittings of so-called polar harmonic Maass forms, we recognize the Poincar\'e series of the third type as the non-meromorphic parts of the Poincar\'e series which are modular in both variables.  This leads to a formula for these Poincar\'e series as non-holomorphic Eichler integrals of other well-studied Poincar\'e series in the general case.

In this paper, we investigate Fourier expansions of meromorphic modular forms.  Over the years, a number of special cases of meromorphic modular forms were shown to have Fourier expansions closely resembling the expansion of the reciprocal of the weight $6$ Eisenstein series which was computed by Hardy and Ramanujan.  By investigating meromorphic modular forms within a larger space of so-called polar harmonic Maass forms, we prove in this paper that all negative-weight meromorphic modular forms (and furthermore all quasi-meromorpic modular forms) have Fourier expansions of this type, granted that they are bounded towards $i\infty$.  
\end{abstract}

\maketitle

\section{Introduction and statement of results.}

In 1950, Petersson \cite{Pe1} used two-variable meromorphic Poincar\'e series $H_{\kappa}$ (defined in \eqref{eqn:Hdef} below) of real weight $\kappa\in\R_{>2}$ in one variable to classify meromorphic modular forms of dual weight $2-\kappa<0$ via the principal parts of their Laurent expansions in the other variable.  To give a rough idea of Petersson's construction, for forms with simple poles, he took linear combinations of the $H_{\kappa}$ functions and then determined when the linear combinations are modular in the second variable. From another perspective, one can use a pairing of Bruinier--Funke \cite{BruinierFunke} to view his classification as a classification of the so-called polar harmonic Maass forms (modular by construction) which happen to be meromorphic.  In this paper, for $k\in\N_{>1}$, we use this perspective to investigate coefficients of (negative-weight) $2-2k$ \begin{it}meromorphic cusp forms\end{it} on $\SL_2(\Z)$, those meromorphic modular forms $f$ for which $y^{1-k}|f(z)|$ decays towards all cusps. The study of Fourier coefficients of such forms was initiated by Hardy and Ramanujan \cite{HR3} in 1918, where they computed the expansion for the reciprocal of the weight $6$ Eisenstein series. Expansions for other meromorphic cusp forms which closely resemble Hardy and Ramanujan's formula were discovered and proven in \cite{BeBiYe,Bi,BKFC}.  The primary goal of this paper is to prove that the similarity in the shape of these Fourier expansions is a general phenomenon which holds for all meromorphic cusp forms.  Furthermore, all negative-weight \begin{it}quasi-meromorphic cusp forms\end{it}, products of powers of the weight 2 Eisenstein series $E_2$ times meromorphic cusp forms $f$, are also shown to have Fourier expansions of the same type. 

 Our investigation in this paper follows recent renewed interest in meromorphic cusp forms.  For example, mirroring a construction of certain cusp forms $f_{k,D}$ from binary quadratic forms of discriminant $D>0$ by Zagier \cite{ZagierRQ}, Bengoechea \cite{Bengoechea} defined meromorphic cusp forms $f_{k,D}$ from binary quadratic forms of discriminant $D<0$.  Duke and Jenkins \cite{DJ} studied traces of meromorphic modular forms and such functions are also of importance for constructing canonical lifts \cite{AGOR, Gu}.  The classification of meromorphic cusp forms in \cite{Pe1} also yields explicit formulas for the Fourier coefficients for them in the case of simple poles which are reminiscent of the formula first discovered by Hardy and Ramanujan \cite{HR3} for the reciprocal of the weight $6$ Eisenstein series.  In \cite{Pe1}, the Fourier coefficients appear as other Poincar\'e series evaluated at points in the upper half-plane, and similar results for a more general class of elliptic Eisenstein series were obtained by von Pippich \cite{vonPippich}.

Before describing the expansions of meromorphic cusp forms,  let us first recall what is known for \begin{it}weakly holomorphic modular forms\end{it}, those meromorphic modular forms whose poles (if any) are supported at cusps.  In work which gave birth to the Circle Method, Hardy and Ramanujan \cite{HR1, HR2} derived their famous asymptotic formula for the partition function $p(n)$, namely, as $n\to\infty$,
\begin{equation}\label{eqn:partitiongrowth}
p(n)\sim \frac{1}{4n\sqrt{3}} e^{\pi \sqrt{  \frac{2n}3}  }.
\end{equation}
Rademacher \cite{Rad} then perfected the method to derive the exact formula
\begin{equation}\label{Radformula}
p(n)= 2 \pi (24n-1)^{-\frac{3}{4}} \sum_{k =1}^{\infty}\frac{A_k(n)}{k} I_{\frac{3}{2}}\left( \frac{\pi\sqrt{24n-1}}{6k}\right).
\end{equation}
Here $I_{l}(x)$ is the $I$-Bessel function of order $l$ and $A_k(n)$ denotes the Kloosterman sum
\begin{displaymath}
A_k(n):=\frac{1}{4} \sqrt{\frac{k}{3}} \sum_{\substack{h \pmod {24k}\\
h^2 \equiv -24n+1 \pmod{24k}}}  \chi_{12}(h) 
e\left(\frac{h}{12k}\right),
\end{displaymath}
where $e(\alpha):=e^{2\pi i \alpha}$ and $\chi_{12}(h):=\left(\frac{12}{h}\right)$. A key ingredient of the proof of \eqref{Radformula} is the fact that the partition generating function is 
essentially the reciprocal of a modular form with no roots in the upper half-plane, but which vanishes at the cusp $i\infty$ instead.  To be more precise, the function 
\begin{displaymath}
P(z):=\sum_{n=0}^{\infty}p(n)e^{2\pi i\left(n-\frac{1}{24}\right)z}=
e^{-\frac{\pi i z}{12}}\prod_{n=1}^{\infty}\frac{1}{1-e^{2\pi i nz}},
\end{displaymath}
is a weight $-1/2$ weakly holomorphic modular form.  Rademacher and Zuckerman \cite{RZ, Zu1, Zu2} subsequently generalized \eqref{Radformula} to obtain exact formulas for the coefficients of all weakly holomorphic modular forms of negative weight.  Using  modern techniques, a new formula for $p(n)$ as a (finite) trace of a certain weak Maass form evaluated at the CM points of discriminant $1-24n$ modulo the action of $\Gamma_0(6)$ was recently proven by Bruinier and Ono \cite{BruinierOno}.  Much in the same way that the sum \eqref{Radformula} restricted to $k\ll \sqrt{n}$ gives a very good asymptotic approximation to $p(n)$, Masri \cite{Masri} used Bruinier and Ono's result to obtain
 a good asymptotic approximation to $p(n)$ with a shorter sum.

Much less is known about Fourier coefficients of meromorphic cusp forms.  Hardy and Ramanujan \cite{HR3} considered the special case that the form has a unique simple pole modulo the action of $\SL_2 (\Z)$. In particular, they found a formula for the reciprocal of the weight $6$ Eisenstein series $E_6$.  Ramanujan \cite{RaLost} then stated further formulas for other meromorphic functions, but, as usual for his writing, did not provide a proof. His claims concerning meromorphic cusp forms with simple poles were then subsequently proven by Bialek in his Ph.D. thesis written under Berndt \cite{Bi}.  However, the problem of determining the Fourier coefficients of meromorphic forms becomes more difficult as the order of the poles increases. Berndt, Bialek, and Yee \cite{BeBiYe} were first to explicitly compute the Fourier coefficients of meromorphic cusp forms with second-order poles, resolving the last of Ramanujan's claims about their coefficients.  In \cite{BKFC}, the authors used various different Poincar\'e series to treat coefficients of a wide variety of meromorphic cusp forms, including forms with third-order poles.  This paper addresses the question for meromorphic cusp forms with arbitrary order poles, showing that these expansions all have the same shape, which we next describe. 

Formulas for Fourier coefficients of meromorphic modular forms look very different from \eqref{Radformula}.  To expound upon one example, we state the result for the reciprocal of the weight $4$ Eisenstein series $E_4$. In this case, we have for $y>\sqrt{3}/2$ 
\begin{equation*}
 \frac{1}{E_4(z)} = \sum_{n=0}^{\infty} \beta_n e^{2\pi i  n z}\quad \text{with}
\end{equation*}
\begin{equation}\label{bn2}
 \beta_n := \frac{6}{E_6(\rho)}\sum_{(\lambda)}\sum_{(c,d)} \frac{h_{(c,d)}(n)}{\lambda^3} e^{\frac{\pi n \sqrt{3}}{\lambda}}.
\end{equation}
Here $\rho:=e^{\frac{\pi i}{3}}$, $(c,d)$ runs over distinct solutions to $\lambda=c^2 -cd +d^2$, where $\lambda = 3^a \prod_{j=1}^{r}p_j^{a_j}$ with $a\in\{0,1\}, p_j$ denoting primes of the form $6m+1$, and $a_j \in \N_0$.  We do not clarify what distinct means in this paper, but the definition turns out to be equivalent to factoring out by the action of units in $\mathcal{O}_{\Q(\rho)}$.
 Finally, we let $h_{(1,0)}(n):=(-1)^{n}/2$, $h_{(2,1)}(n):=1/2$, and for $\lambda \geq 7$ we let $(a,b)\in\Z^2$ be any solution to $ad-bc=1$ and set 
\begin{equation*}
 h_{(c,d)}(n):=   \cos\left( (ad+bc -2ac-2bd )\frac{\pi n}{\lambda}-6 \arctan\left(\frac{c\sqrt{3}}{2d-c}\right)\right).
\end{equation*}

One sees directly that the main asymptotic term of $\beta_n$ comes from $\lambda=1$ in \eqref{bn2}.  One notes that the growth is significantly faster than in \eqref{eqn:partitiongrowth}.  This is explained by the fact that the Fourier expansions of weakly holomorphic modular forms converge for all $z\in\H$, whereas the Fourier expansion for a meromorphic modular form only converges for those $z$ with sufficiently large imaginary part, and hence the coefficients must grow exponentially.
 
Comparing \eqref{bn2} with the Fourier expansions of other meromorphic cusps forms with poles at elliptic fixed points $\mathfrak{z}$, it is striking that all known examples are linear combinations of the series 
\begin{equation}\label{Fklr}
F_{k,j,r}(\mathfrak{z},z):= \mathfrak{z}_2^{-j}\sum_{m=0}^{\infty} \;\sideset{}{^*}\sum_{\mathfrak{b}\subseteq\mathcal{O}_{\Q(\mathfrak{z})}} \frac{C_{k}\left(\mathfrak{b},m\right)}{N(\mathfrak{b})^{\frac{k}{2}-j}} (4\pi m)^r e^{\frac{2\pi  mv}{N(\mathfrak{b})}} e^{2\pi i m z},
\end{equation}
where $C_{k}$ is given by \eqref{eqn:C6} and \eqref{eqn:C4}, $N$ is the norm in $\mathcal{O}_{\Q (\mathfrak{z})}$, the sum on $\mathfrak{b}$ runs over primitive ideals, and $\mathfrak{z}=\mathfrak{z}_1+i\mathfrak{z}_2$ throughout.  

The appearance of the function $F_{k,j,r}$ in these examples is no accident; this paper utilizes the fact that the functions $H_{2k}$ from Petersson's classification are ``meromorphic parts'' of certain polar harmonic Maass forms $y^{2k-1}\Psi_{2k}(\mathfrak{z},z)$ (defined in \eqref{eqn:Psidef} below) to show that such an expansion is quite general.  In order to do so, we construct a basis of polar harmonic Maass forms and show that their Fourier coefficients are all of the shape \eqref{Fklr}.  To give an explicit version of this formula, we require a basis of weight $2-2k$ polar harmonic Maass forms with poles at $z=\tau_0$ whose meromorphic parts are given by $R_{2k,\mathfrak{z}}^{n}\left[H_{2k}(\mathfrak{z},z)\right]_{\mathfrak{z}=\tau_0}$ (see Proposition \ref{prop:basis}), where $R_{\kappa,\mathfrak{z}}:=2i \frac{\partial}{\partial \mathfrak{z}}+\frac{\kappa}{\mathfrak{z}_2}$ is the \begin{it}Maass raising operator\end{it}, and iterated raising is defined by $R_{\kappa,\mathfrak{z}}^n := R_{\kappa+2n-2,\mathfrak{z}}\circ\cdots \circ R_{\kappa,\mathfrak{z}}$.  Throughout the paper, we also write $\tau_{l}=u_{l}+iv_{l}$, denote by $\omega_{\tau_l}$ the order of the subgroup of $\PSL_2(\Z)$ fixing $\tau_l$, and write $\mathfrak{z}$ and $z$ dependencies only if needed.  Repeated raising has appeared in a number of contexts.  For example, Bol \cite{Bol} showed that $R_{2-2k}^{2k-1}$ is the same as holomorphic differentiation $2k-1$ times, Bruinier--Ono--Rhoades \cite{BruinierOnoRhoades} studied the image of $R_{2-2k}^{2k-1}$ on harmonic Maass forms, and Duke--Jenkins \cite{DJ} raised to weight 0 while constructing a Zagier lift between integral and half-integral weight weakly holomorphic modular forms. In this paper, the raising operator is applied in the $\mathfrak{z}$ variable to increase the order of poles in the $z$ variable.
\begin{theorem}\label{Fouriercoefficients}
Let $f$ be a weight $2-2k<0$ meromorphic cusp form with poles only at $\tau_0=i$ or $\tau_0=\rho$ modulo $\SL_2(\Z)$.  Then, for $y>v_0$, $f(z)$ has a Fourier expansion which is a linear combination of the series $F_{k_l,j,r}(\tau_0,z)$ with $k_l\in \N$, $j\in \N_0$, and $r\in \N_0$.  

In particular, if 
$$
f(z) = \sum_{n=0}^{n_0} a_{n} R_{2k,\mathfrak{z}}^{n}\left[H_{2k}\left(\mathfrak{z},z\right)\right]_{\mathfrak{z}=\tau_{0}},
$$
then, for $y>v_0$, we have the following Fourier expansion
$$
f(z) = 2\omega_{\tau_0}\sum_{n=0}^{n_0} a_{n} \sum_{j=0}^{n} \frac{(2k+n-1)!}{(2k+n-1-j)!}\binom{n}{j} F_{2k+2n,j,n-j}(\tau_0,z).
$$
\end{theorem}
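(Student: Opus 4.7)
The theorem reduces to computing the $z$-Fourier expansion of each $R_{2k,\mathfrak{z}}^{n}[H_{2k}(\mathfrak{z},z)]_{\mathfrak{z}=\tau_{0}}$. For the first (existence) assertion we invoke Proposition~\ref{prop:basis}, which supplies the representation $f(z)=\sum_{n=0}^{n_{0}}a_{n}R_{2k,\mathfrak{z}}^{n}[H_{2k}(\mathfrak{z},z)]_{\mathfrak{z}=\tau_{0}}$ appearing in the ``In particular'' clause; once the explicit formula is proved, every summand is a linear combination of $F_{2k+2n,j,n-j}(\tau_{0},z)$ with $k_{l}=2k+2n\in\N$ and $j,n-j\in\N_{0}$, so the first assertion follows from the second. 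The remaining work is thus to establish the explicit identity. The first substantive step is to write down the $z$-Fourier expansion of $H_{2k}(\mathfrak{z},z)$, valid for $y>\mathfrak{z}_{2}$: starting from \eqref{eqn:Hdef} we unfold over the $\SL_{2}(\Z)$-orbits (parametrized by primitive ideals $\mathfrak{b}\subseteq\mathcal{O}_{\Q(\mathfrak{z})}$ with $N(\mathfrak{b})=|c\mathfrak{z}+d|^{2}$) and apply the Lipschitz summation formula
\begin{equation*}
\sum_{n\in\Z}\frac{1}{(z-w-n)^{2k}}=\frac{(-2\pi i)^{2k}}{(2k-1)!}\sum_{m\geq 1}m^{2k-1}e^{2\pi i m(z-w)},\qquad \text{Im}(z-w)>0,
\end{equation*}
producing an expansion whose coefficients (as a series in $e^{2\pi i m z}$) are holomorphic in $\mathfrak{z}$, of the shape $e^{-2\pi i m\mathfrak{z}/N(\mathfrak{b})}$ times arithmetic data. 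The normalizations in \eqref{eqn:C6}--\eqref{eqn:C4} are arranged so that, after evaluating at $\mathfrak{z}=\tau_{0}$ and splitting $e^{-2\pi i m\tau_{0}/N(\mathfrak{b})}=e^{-2\pi i m u_{0}/N(\mathfrak{b})}\cdot e^{2\pi m v_{0}/N(\mathfrak{b})}$ (the phase absorbed into $C_{2k}(\mathfrak{b},m)$), the result is exactly $2\omega_{\tau_{0}}F_{2k,0,0}(\tau_{0},z)$---the $n=0$ case of the claimed formula.

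The core computation is an iterated raising identity. Because each Fourier coefficient of $H_{2k}(\mathfrak{z},z)$ in $z$ is holomorphic in $\mathfrak{z}$, the factor $\zeta:=e^{-2\pi i m\mathfrak{z}/N(\mathfrak{b})}$ satisfies $2i\partial_{\mathfrak{z}}\zeta=\beta\zeta$ with $\beta:=4\pi m/N(\mathfrak{b})$, which explains the ``$4\pi m$'' appearing in $F_{k,j,r}$. We then prove by induction on $n$ that
\begin{equation*}
R_{\kappa}^{n}[\zeta]=\sum_{j=0}^{n}\binom{n}{j}\frac{(\kappa+n-1)!}{(\kappa+n-1-j)!}\frac{\beta^{\,n-j}}{\mathfrak{z}_{2}^{\,j}}\,\zeta.
\end{equation*}
The base case $n=0$ is immediate; the inductive step $n\rightsquigarrow n+1$ uses $R_{\kappa+2n}=2i\partial_{\mathfrak{z}}+(\kappa+2n)/\mathfrak{z}_{2}$, the derivatives $2i\partial_{\mathfrak{z}}\zeta=\beta\zeta$ and $2i\partial_{\mathfrak{z}}(\mathfrak{z}_{2}^{-j})=-j\mathfrak{z}_{2}^{-j-1}$, and after clearing factorials reduces to the Pascal-type identity $(n-j+1)\binom{n}{j-1}=j\binom{n}{j}$. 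Applying this identity termwise to the expansion of $H_{2k}$ (justified by absolute convergence for $y>\mathfrak{z}_{2}$), matching each resulting summand with $F_{2k+2n,j,n-j}(\mathfrak{z},z)$, evaluating at $\mathfrak{z}=\tau_{0}$, and pairing against $a_{n}$ delivers the claimed formula.

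The principal obstacle is the first concrete step: writing $H_{2k}(\mathfrak{z},z)$ in ideal-theoretic form and verifying that the normalization of $C_{2k}(\mathfrak{b},m)$ is consistent between the Poincar\'e series and the series $F_{k,j,r}$. Once this expansion is in hand, the iterated raising is a straightforward induction, and the combinatorial coefficient $\binom{n}{j}(2k+n-1)!/(2k+n-1-j)!$ arises from a single Pascal-type identity.
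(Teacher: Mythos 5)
Your overall strategy (expand $H_{2k}(\mathfrak{z},z)$ as a Fourier series in $z$, apply $R_{2k,\mathfrak{z}}^{n}$ termwise, then evaluate at $\mathfrak{z}=\tau_{0}$) is viable and is essentially a reordering of the paper's argument, which first proves the operator identity of Proposition \ref{prop:Rtaugen} expressing $R_{2k,\mathfrak{z}}^{n}(H_{2k})$ through $\partial_z^{n-j}H_{2k+2n,j}$ and only then inserts the known expansion of Theorem \ref{genFC2}. Your iterated-raising formula for a genuine eigenfunction of $2i\partial_{\mathfrak{z}}$ is correct, and your Pascal-type identity is exactly what reduces the inductive step to \eqref{eqn:binom}. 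The appeal to Proposition \ref{prop:basis} for the first assertion is also fine.

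However, there is a genuine gap in the core computation. The $m$th Fourier coefficient of $H_{2k}(\mathfrak{z},z)$, as a function of $\mathfrak{z}$, is $\sum_{(c,d)=1}(c\mathfrak{z}+d)^{-2k}e^{-2\pi i m M\mathfrak{z}}$, and this is \emph{not} of the form $(\text{const})\cdot e^{-2\pi i m\mathfrak{z}/N(\mathfrak{b})}$: the quantity $N(\mathfrak{b})=|c\mathfrak{z}+d|^{2}$ is itself a non-holomorphic function of $\mathfrak{z}$, $\re(M\mathfrak{z})$ is not $\mathfrak{z}_1/|c\mathfrak{z}+d|^{2}$, and the automorphy factor $(c\mathfrak{z}+d)^{-2k}$ also varies. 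For a single term $T=(c\mathfrak{z}+d)^{-2k}e^{-2\pi i mM\mathfrak{z}}$ one has $R_{2k,\mathfrak{z}}(T)=(c\mathfrak{z}+d)^{-2}\bigl(4\pi m+2k\,\im(M\mathfrak{z})^{-1}\bigr)T$, with $(c\mathfrak{z}+d)^{-2}$ rather than your $|c\mathfrak{z}+d|^{-2}$. At $\mathfrak{z}=\tau_{0}$ these differ by the unimodular phase $e^{-2i\arg(c\tau_{0}+d)}$, and after $n$ raisings the accumulated phase is precisely what converts $C_{2k}(\mathfrak{b},m)$ into $C_{2k+2n}(\mathfrak{b},m)$, i.e.\ the shift $2k\arctan(\cdot)\mapsto(2k+2n)\arctan(\cdot)$ in \eqref{eqn:C6} and \eqref{eqn:C4}. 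Your version, which freezes $N(\mathfrak{b})$ as a constant, drops this phase, so the summands you produce carry $C_{2k}$ (indeed a non-real quantity once the residual phase $e^{-2\pi i mu_{0}/N(\mathfrak{b})}$ is accounted for) and cannot be matched with $F_{2k+2n,j,n-j}$. The clean repair is the paper's: commute $R_{2k,\mathfrak{z}}$ with the slash $|_{2k,\mathfrak{z}}M$ so that all raising acts on the seed $e^{-2\pi i m\mathfrak{z}}$, where your identity does apply with $\beta=4\pi m$, while the weight of the slash, and hence the index of $C$, climbs to $2k+2n$. Two smaller points: the Lipschitz formula is not needed (the seed of $H_{2k}$ expands by a geometric series), and you must keep the expansion in the $\mathfrak{z}$-varying form of Theorem \ref{genFC} while differentiating, since the ideal-theoretic form only makes sense after evaluation at $\tau_{0}$.
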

\begin{remarks}
\noindent

\noindent
\begin{enumerate}[leftmargin=*]
\item
Related functions occurred when studying coefficients of certain meromorphic Jacobi forms \cite{BF} arising from characters introduced by Kac and Wakimoto \cite{KW}.
\item
In the introduction we restrict to meromorphic cusp forms whose only poles occur at $i$ or $\rho$ for cosmetic reasons, but our results are more general. Namely, Theorem \ref{thm:FourierCoeff} implies that all meromorphic cusp forms have Fourier expansions of similar shapes.  These Fourier expansions may be rewritten with an algebraic structure if the poles are specifically at $i$ or $\rho$. 
Moreover, Theorem \ref{thm:FourierCoeff} is proven by viewing meromorphic cusp forms within a larger space of polar harmonic Maass forms.  The meromorphic part of these polar harmonic Maass forms also have Fourier expansions of the same shape.
\item
Theorem \ref{Fouriercoefficients} further states that every meromorphic cusp form of weight $2-2k$ (with $k$ fixed here) is actually a linear combination of the more restrictive sums ($j\leq n$)
$$
v_0^{-j}\sum_{m=0}^{\infty} \;\sideset{}{^*}\sum_{\mathfrak{b}\subseteq\mathcal{O}_{\Q(\tau_0)}} \frac{C_{2k+2n}\left(\mathfrak{b},m\right)}{N(\mathfrak{b})^{k}} \left(\frac{4\pi m}{N(\mathfrak{b})}\right)^{n-j} e^{\frac{2\pi  mv_0}{N(\mathfrak{b})}} e^{2\pi i m z}.
$$
\item 
In addition to the Fourier coefficients of meromorphic modular forms, it is natural to consider elliptic expansions around points in the upper half-plane (see \eqref{eqn:ellexp}).  In particular, J. Zhou has pointed out that elliptic expansions of certain meromorphic quasi-modular forms have recently appeared in the Gromov-Witten theory of Calabi-Yau varieties \cite{ABK,ASYZ}, in which the coefficients contain rich enumerative information.
\end{enumerate}
\end{remarks}
In addition to formulas for meromorphic modular forms, Ramanujan conjectured \cite{RaLost} an expression for $E_2/E_6$, $E_2^2/E_6$, and $E_2/E_4$ as linear combinations of the functions $F_{k,j,r}$.  
These are once again part of a general framework, this time for quasi-meromorphic cusp forms.
\begin{theorem}\label{QuasiTheorem}
If $f$ is a weight $2-2k$ meromorphic cusp form, $n\in\N_0$ satisfies $2-2k+2n<0$, and all of the poles of $f$ are at $\mathfrak{z}=i$ or $\mathfrak{z}=\rho$, then $E_2^n f$ is a linear combination of $F_{2k_l\omega_{\mathfrak{z}},j,r}$ with $k_l \in \N$, $j\in \N_0$, and $r\in \N_0$.
\end{theorem}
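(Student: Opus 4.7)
The plan is to reduce Theorem \ref{QuasiTheorem} to Theorem \ref{Fouriercoefficients} via the classical structure theory of quasi-modular forms combined with the elementary fact that the differential operator $D := \tfrac{1}{2\pi i}\tfrac{d}{dz}$ preserves the linear span of the series $F_{k,j,r}$.

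\textbf{Step 1: Kaneko--Zagier-type decomposition.} Under the hypotheses of the theorem, I would first show by induction on $n$ that
$$
E_2^n f \;=\; \sum_{j=0}^{n} D^{\,j} g_{j},
$$
where each $g_{j}$ is a weight $2-2k+2n-2j$ meromorphic cusp form whose poles lie only at $\tau_0\in\{i,\rho\}$. The engine is the identity
$$
E_2\, h \;=\; \frac{12}{w}\bigl( D h - \vartheta_{w} h\bigr),
$$
valid for any weight $w$ meromorphic modular form $h$, where $\vartheta_{w} h := Dh - \tfrac{w}{12}E_2 h$ is the Serre derivative and is itself a weight $w+2$ meromorphic modular form. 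Iterating this identity on $E_2^n f = E_2\cdot E_2^{n-1}f$ and invoking Ramanujan's formulas for $DE_2$, $DE_4$, $DE_6$ to handle the cross terms, one isolates the depth-$n$ part of $E_2^n f$ as a nonzero scalar multiple of $D^{n}f$; the remainder has strictly lower depth and is handled by the inductive hypothesis. Since $E_2, E_4, E_6$ are holomorphic on $\H$ and since $D$ and $\vartheta_w$ preserve both the pole locus $\{i,\rho\}$ and the boundedness of $y^{k-1-n}|\cdot|$ at $i\infty$, every $g_{j}$ produced lies in the class to which Theorem \ref{Fouriercoefficients} applies.

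\textbf{Step 2: Fourier expansion of each $g_j$ and transport through $D$.} The assumption $2-2k+2n<0$ forces each weight $2-2k+2n-2j$ (for $0\le j\le n$) to be negative, so Theorem \ref{Fouriercoefficients} endows $g_{j}$ with a Fourier expansion, valid for $y>v_0$, which is a linear combination of series $F_{2k_l\omega_{\mathfrak{z}},j',r'}(\tau_0,z)$. A direct calculation, using $D(e^{2\pi i m z}) = m\, e^{2\pi i m z}$ and $m\cdot (4\pi m)^{r'} = (4\pi)^{-1}(4\pi m)^{r'+1}$, yields
$$
D\, F_{k,j',r'}(\tau_0,z) \;=\; \frac{1}{4\pi}\, F_{k,j',r'+1}(\tau_0,z),
$$
so iterated application of $D$ sends $F$-series to $F$-series with the same first two indices and the third index shifted upward. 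Substituting the Fourier expansion of each $g_{j}$ into the decomposition of Step 1 and collecting terms produces the desired expression of $E_2^n f$ as a linear combination of $F_{2k_l\omega_{\mathfrak{z}},j',r'}(\tau_0,z)$ with $k_l\in\N$ and $j',r'\in\N_0$.

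\textbf{Main obstacle.} The technical heart is Step 1: verifying that the classical Kaneko--Zagier structure theorem for quasi-modular forms extends to the meromorphic cusp form setting with the correct pole control. While the algebraic bookkeeping closely parallels the holomorphic case, one must carefully track that the inductive peeling-off of the depth-$n$ term produces forms whose poles remain confined to $\tau_0$ and whose decay at $i\infty$ still satisfies the cusp-form hypothesis of Theorem \ref{Fouriercoefficients}. This ultimately follows because each of the operations involved (multiplication by the holomorphic forms $E_2, E_4, E_6$, differentiation by $D$, and the Serre derivative) individually preserves both the pole locus and the required growth condition, but checking it rigorously in the inductive argument is the place where the proof requires the most care.
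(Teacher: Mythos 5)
Your proposal is correct, and its overall architecture coincides with the paper's: both reduce the statement to the purely meromorphic case (Theorem \ref{Fouriercoefficients}, resp.\ Theorem \ref{thm:FourierCoeff}) by expressing $E_2^n f$ through genuine meromorphic cusp forms and their $D$-derivatives, and both then use that $D$ acts on the series by $D F_{k,j,r}=\frac{1}{4\pi}F_{k,j,r+1}$, which you compute correctly. The difference is in how the structural decomposition of your Step 1 is established. You stay on the holomorphic side, iterating $E_2 h=\frac{12}{w}(Dh-\vartheta_w h)$ together with Ramanujan's identities; the paper instead passes to the completion $\widehat{E}_2=E_2-\frac{3}{\pi y}$ and uses the raising/lowering calculus: Proposition \ref{prop:E2mero} exhibits an explicit alternating combination $F_n$ of the raised forms $R_{2-2k+2n-2l}^{l}\big(\widehat{E}_2^{\,n-l}f\big)$ and shows it is annihilated by the lowering operator, hence lies in $\SS_{2-2k+2n}$; Lemma \ref{lem:raisemp} identifies the meromorphic part of each raised term as $(2i)^l\frac{\partial^l}{\partial z^l}\big(E_2^{n-l}f\big)$, so that $E_2^n f$ equals $F_n$ plus derivatives of the lower powers $E_2^{n-l}f$, and one inducts on $n$. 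Unwinding that recursion produces exactly your decomposition $E_2^n f=\sum_j D^j g_j$. Your route is more elementary and delivers all the $g_j$ in one pass; the paper's route avoids the Ramanujan-identity bookkeeping, works verbatim for arbitrary pole locations (Theorem \ref{thm:EpowFCgen}), and plugs directly into the polar harmonic Maass form machinery already built in Sections \ref{sec:prelim}--\ref{sec:Fouriercoeff}.

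Two points in your Step 1 need to be made explicit. First, the clean decomposition with nonvanishing leading coefficients is special to negative total weight: peeling off the depth-$p$ part requires dividing by $v(v+1)\cdots(v+p-1)$ with $v=2-2k+2n-2p$, and your identity requires the intermediate weights $2-2k+2j$ to be nonzero; the hypothesis $2-2k+2n<0$ forces all of these quantities to be negative, hence nonzero (in nonnegative weight the structure theorem acquires the exceptional summand involving $D^{w/2-1}E_2$ and your identity degenerates at weight $0$). Second, the induction should run on the depth rather than on the power of $E_2$: after removing the depth-$n$ part $c\,D^n f$, the remainder is a general quasi-meromorphic cusp form of depth at most $n-1$, not $E_2^{n-1}$ times a meromorphic form, so the inductive statement must be formulated for all such forms, with the pole locus and boundedness at $i\infty$ carried along as you indicate. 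With those adjustments the argument is complete.
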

\begin{remarks}
\noindent
\begin{enumerate}[leftmargin=*]
\item
A comment about the role of the functions $E_2^nf$ in the framework of modular 
forms is in order.  Although $E_2$ is not itself modular, it has a weight $2$ modular completion $\widehat{E}_2(z):=E_2(z)-\frac{3}{\pi y}$.  Hence, if $f$ is a weight $2-2k$ meromorphic modular form, then $\widehat{E}_2^n f$ is a weight $2n+2-2k$ \begin{it}almost meromorphic modular form\end{it}, i.e., a function which satisfies weight $2n+2-2k$ modularity and has an expansion of the type
\begin{equation}\label{eqn:almostmeroexp}
%R_{2-2k+2n-2l,z}^{l}\left(\widehat{E}_2^{n-l}(z)f(z)\right)=
\sum_{j=0}^r y^{-j} f_j(z),
\end{equation}
where the functions $f_j$ are meromorphic.  However, the \begin{it}meromorphic part\end{it} $f_0$ of $\widehat{E}_2^nf$ is precisely $E_2^n f$.  Furthermore, the space of almost meromorphic modular forms of weight $2n+2-2k$ is spanned by $\widehat{E}_2^{n-m}g$, where $g$ is a weight $2m+2-2k$ meromorphic modular form.  Investigating almost meromorphic modular forms leads to Theorem \ref{QuasiTheorem}.
\item
Again, the restriction that all of the poles of $f$ are at $i$ or $\rho$ is done for cosmetic purposes.  In Theorem \ref{thm:EpowFCgen}, we show that all quasi-meromorphic cusp forms have similar Fourier expansions.
\end{enumerate}
\end{remarks}

Note that Theorem \ref{Fouriercoefficients} gives an explicit formula for the coefficients of a meromorphic cusp form written in terms of the meromorphic parts $R_{2k,\mathfrak{z}}^n \left[H_{2k}(\mathfrak{z},z)\right]_{\mathfrak{z}=\tau_0}$ of the basis elements.  In general, our results are explicit in the sense that one essentially obtains the Fourier coefficients of an arbitrary form once it is written as a linear combination of certain basis elements.  Since one may compute such basis representation given only the principal parts at (finitely many) points in $\H$, this closely mirrors the situation for weakly holomorphic modular forms, where one obtains the Fourier coefficients once the principal part of its Fourier expansion has been determined. Coefficients of new explicit examples are obtained by carrying out the calculations necessary to compute their representations with the given basis.  We work out the explicit examples $1/E_6^4$ and $E_2/E_6^4$ in Theorems \ref{thm:1/E6^4} and \ref{thm:E2/E6^4}, respectively, to give the flavor of such calculations.  If the meromorphic cusp form has simple poles, then a uniform result is obtained for all cases simultaneously.  In addition to the cases conjectured by Ramanujan, this yields the new following formula for the coefficients of $E_2^n/E_{10}$ for $0\leq n\leq 4$.  
\begin{theorem}\label{thm:E2^n/E10}
For $0\leq n\leq 4$ and $m\in\N_0$, the $m$th Fourier coefficient of $E_2^n/E_{10}$ equals
$$
\frac{4\left(\frac{3}{\pi}\right)^{n}}{E_4^3(i)} \;\sideset{}{^*}\sum_{\mathfrak{b}\subseteq\mathcal{O}_{\Q(i)}} \frac{C_{12}(\mathfrak{b},m)}{N(\mathfrak{b})^{6-n}} e^{\frac{2\pi m}{N(\mathfrak{b})}}+ \frac{6\left(\frac{2\sqrt{3}}{\pi}\right)^{n}}{E_6^2(\rho)} \;\sideset{}{^*}\sum_{\mathfrak{b}\subseteq\mathcal{O}_{\Q(\rho)}} \frac{C_{12}(\mathfrak{b},m)}{N(\mathfrak{b})^{6-n}} e^{\frac{\sqrt{3}\pi m}{N(\mathfrak{b})}}.
$$
\end{theorem}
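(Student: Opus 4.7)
The plan is to reduce Theorem \ref{thm:E2^n/E10} to Theorems \ref{Fouriercoefficients} and \ref{QuasiTheorem} applied to $f=1/E_{10}$, and then pin down the explicit constants using the factorization $E_{10}=E_4E_6$ together with the classical evaluations of $E_2$ at the elliptic fixed points.

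First I would observe that $\dim M_{10}(\SL_2(\Z))=1$, so $E_{10}=E_4E_6$. Since $E_4$ vanishes only at the $\SL_2(\Z)$-orbit of $\rho$ and $E_6$ only at the orbit of $i$ (each to order $1$), the reciprocal $1/E_{10}$ is a weight $-10$ meromorphic cusp form whose only poles modulo $\SL_2(\Z)$ are simple poles at $i$ and $\rho$. By Proposition \ref{prop:basis} we may write
$$
\frac{1}{E_{10}(z)}=\alpha_i\,H_{12}(i,z)+\alpha_\rho\,H_{12}(\rho,z),
$$
and determine $\alpha_i,\alpha_\rho$ by matching residues. Using the Ramanujan identities $q\,\partial_q E_4=(E_2E_4-E_6)/3$ and $q\,\partial_q E_6=(E_2E_6-E_4^2)/2$, specialized at $z=i$ (where $E_6$ vanishes) and $z=\rho$ (where $E_4$ vanishes), one finds $E_6'(i)=-\pi i\,E_4(i)^2$ and $E_4'(\rho)=-\tfrac{2\pi i}{3}E_6(\rho)$, so $\alpha_i$ and $\alpha_\rho$ are explicit universal constants divided by $E_4(i)^3$ and $E_6(\rho)^2$ respectively. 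Applying Theorem \ref{Fouriercoefficients} with $k=6$ and $n_0=0$, and substituting $\omega_i=2$, $\omega_\rho=3$, $v_i=1$, $v_\rho=\sqrt{3}/2$, identifies the resulting sums with $F_{12,0,0}(\tau_0,z)$ for $\tau_0\in\{i,\rho\}$ and yields the $n=0$ case of the theorem.

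For $1\leq n\leq 4$ the weight $2n-10$ of $E_2^n/E_{10}$ is negative, so Theorem \ref{QuasiTheorem} applies and guarantees that $E_2^n/E_{10}$ is a linear combination of functions $F_{12,j,r}$ at $\tau_0\in\{i,\rho\}$. To locate the precise combination I would invoke the classical evaluations $E_2(i)=3/\pi$ and $E_2(\rho)=2\sqrt{3}/\pi$, which together with $\widehat{E}_2=E_2-3/(\pi y)$ yield $\widehat{E}_2(i)=\widehat{E}_2(\rho)=0$. Since $E_2^n/E_{10}$ is the meromorphic part of the almost meromorphic completion $\widehat{E}_2^n/E_{10}$, expanding $\widehat{E}_2(z)$ around each $\tau_0$ and using the vanishing at $\tau_0$ shows that the leading singular contribution of $\widehat{E}_2^n/E_{10}$ at $\tau_0$ remains a simple pole with leading coefficient obtained from that of $1/E_{10}$ multiplied by $E_2(\tau_0)^n=(3/(\pi v_0))^n$. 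Propagating this factor through the raising-operator description behind Theorem \ref{Fouriercoefficients} produces the factor $(3/\pi)^n$ at $\tau_0=i$ and $(2\sqrt{3}/\pi)^n$ at $\tau_0=\rho$ multiplying the $F_{12,n,0}$ terms.

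The main obstacle is the last step: verifying that the Fourier expansion of $E_2^n/E_{10}$ contains \emph{only} the diagonal terms $F_{12,n,0}(\tau_0,z)$, with no corrections from $F_{12,j,r}$ for other $(j,r)$. This requires combining the vanishing $\widehat{E}_2(i)=\widehat{E}_2(\rho)=0$ with the almost meromorphic basis description from the remarks after Theorem \ref{QuasiTheorem}, so that all would-be corrections $\widehat{E}_2^{n-m}g_m$ with $m<n$ are seen to be regular at the elliptic points (the zero of $\widehat{E}_2^{n-m}$ at $\tau_0$ absorbing the pole of $g_m$) and hence contribute no term of the prescribed shape. Once this is settled, assembling the contributions from both elliptic points and inserting $\omega_i=2$, $\omega_\rho=3$ yields the formula stated in Theorem \ref{thm:E2^n/E10}.
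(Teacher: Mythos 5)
Your treatment of $n=0$ is correct and matches the paper: factoring $E_{10}=E_4E_6$, locating the simple poles at $i$ and $\rho$, writing $1/E_{10}=a_1H_{12}(i,z)+a_2H_{12}(\rho,z)$, and evaluating $a_1=1/E_4^3(i)$, $a_2=1/E_6^2(\rho)$ from $E_{10}'(i)=-\pi iE_4^3(i)$ and $E_{10}'(\rho)=-\tfrac{2\pi i}{3}E_6^2(\rho)$; Theorem \ref{genFC2} then finishes that case. For $1\leq n\leq 4$, however, there is a genuine gap at exactly the step you flag as ``the main obstacle,'' and the mechanism you propose does not close it. The identity actually needed is $(\pi/3)^nE_2^n/E_{10}=a_1H_{12,n}(i,z)+a_2H_{12,n}(\rho,z)$, i.e.\ that multiplication by $E_2^n$ corresponds to inserting the factor $\mathfrak{z}_2^{-n}$ into the Poincar\'e summand. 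This cannot be pinned down by matching principal parts at $i$ and $\rho$: $E_2^n/E_{10}$ is only quasi-modular, and $z\mapsto H_{12,n}(\tau_0,z)$ is not modular in $z$ either, so the difference of the two sides is not a negative-weight meromorphic modular form and Proposition \ref{prop:basis} cannot be applied to it. Knowing that the residue of $E_2^n/E_{10}$ at $\tau_0$ equals $E_2(\tau_0)^n$ times that of $1/E_{10}$ therefore determines nothing by itself.

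Moreover, the cancellation mechanism you invoke conflates $\widehat{E}_2$ with $E_2$. In the recursion behind Theorem \ref{thm:EpowFCgen}, the corrections one must subtract from the meromorphic cusp form $F_n$ are the meromorphic parts of $R_{2-2k+2n-2l}^{l}\bigl(\widehat{E}_2^{\,n-l}/E_{10}\bigr)$, which by Lemma \ref{lem:raisemp} are constants times $\partial_z^{l}\bigl(E_2^{n-l}/E_{10}\bigr)$. Since $E_2(\tau_0)=3/(\pi v_0)\neq 0$ (it is $\widehat{E}_2$, not $E_2$, that vanishes at the elliptic fixed points), these corrections have poles of order $l+1$ at $\tau_0$ and contribute terms $F_{12,j,r}$ with $r>0$; they are not ``regular at the elliptic points,'' and proving that all such contributions collapse to the single diagonal term is precisely the content of the paper's Theorem \ref{thm:E2^n}. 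That result requires the completed series $\mathcal{H}_{2k,j}$ of \eqref{eqn:Hkncompdef}, the recursion of Proposition \ref{prop:Hkn} expressing $\sum_{l}\binom{j}{l}(-y)^{l-j}H_{2k,l}$ through iterated raisings of $H_{2k-2l}$, and an eigenvalue decomposition under $\Delta_{2-2k+2j,z}$ showing that the non-meromorphic corrections $g_{l,j}$ vanish; the hypothesis $k>j+1$ there is also where the restriction $n\leq 4$ comes from. None of these ingredients appears in your sketch. (A smaller point: Theorem \ref{QuasiTheorem} only yields a combination of $F_{2k_l\omega_{\mathfrak{z}},j,r}$ with varying first index --- compare Theorem \ref{thm:E2/E6^4}, where $C_{32}$, $C_{28}$, and $C_{24}$ all occur --- so even your restriction to first index $12$ requires justification.)
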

\begin{remark}
By the corollary following Proposition 27 of \cite{123}, one can explicitly rewrite $E_4(i)=\frac{12}{(8\pi)^2}(\frac{\Gamma(\frac{1}{4})}{\Gamma(\frac{3}{4})} )^{4}$ and $E_6(\rho)=\frac{24\sqrt{3}}{(6\pi)^3}(\frac{\Gamma(\frac{1}{3})}{\Gamma(\frac{2}{3})})^9$. 
\end{remark}
The paper is organized as follows.  In Section \ref{sec:prelim}, we introduce weak Maass forms and polar Maass forms and construct a basis of harmonic Maass forms via Poincar\'e series.  In Section \ref{sec:conj}, we show that $H_{2k}$ is the meromorphic part of the polar harmonic Maass forms $y^{2k-1}\Psi_{2k}(\mathfrak{z},z)$.   Section \ref{sec:Fouriercoeff} is devoted to the proof of Theorem \ref{Fouriercoefficients}, and we prove Theorem \ref{QuasiTheorem} in Section \ref{sec:Quasi}.  The Fourier coefficients of the explicit examples $1/E_6^4$ and $E_2/E_6^4$ are computed in Section \ref{sec:examples} and we conclude the paper with Theorem \ref{thm:E2^n/E10}.

\section{Preliminaries}\label{sec:prelim}

\subsection{Polar harmonic Maass forms}\label{2.1}
In this section, we recall the definition of harmonic Maass forms and also introduce harmonic Maass forms which may have poles in $\H$.  

We begin by defining harmonic Maass forms, which Bruinier and Funke introduced in \cite{BruinierFunke}.
\begin{definition*}
For $\kappa \in\R$ with $\kappa < 1$, a congruence subgroup $\Gamma\subseteq \SL_2(\Z)$, and a multiplier system $\nu$ with $|\nu|=1$, a \begin{it}harmonic Maass form\end{it} of weight $\kappa$ on $\Gamma$ with multiplier $\nu$ is a real analytic function $\mathcal{F}:\H\to\C$ satisfying the following conditions:
\noindent

\noindent
\begin{enumerate}[leftmargin=*]
\item 
For every $M=\left(\begin{smallmatrix}a&b\\ c&d\end{smallmatrix}\right)\in\Gamma$, we have $\mathcal{F}|_{\kappa,\nu}M=\mathcal{F}$, where 
\begin{equation*}%\label{eqn:slashdef}
\mathcal{F}(z)|_{\kappa,\nu}M  :=\left(cz+d\right)^{-\kappa} \nu(M)^{-1}
\mathcal{F}
(Mz).
\end{equation*}
For $(cz+d)^{-\kappa}$, we throughout use the principal branch of the logarithm.
\item 

The function $\mathcal{F}$ is annihilated by the \begin{it}weight $\kappa$ hyperbolic Laplacian\end{it} 
$$
\Delta_{\kappa}:=-y^2\left(\frac{\partial^2}{\partial x^2}+\frac{\partial^2}{\partial y^2}\right)+i\kappa y\left(\frac{\partial}{\partial x}+i\frac{\partial}{\partial y}\right).
$$
\item 
The function $\mathcal{F}$ grows at most linear exponentially towards cusps of $\Gamma$.
\end{enumerate}
If one allows in condition (2) a general eigenvalue under $\Delta_{\kappa}$, then one obtains a \begin{it}weak Maass form.\end{it}
\end{definition*}
We denote the space of harmonic Maass forms by $H_{\kappa}(\Gamma,\nu)$ and the subspace of weakly holomorphic modular forms by $M_{\kappa}^!(\Gamma,\nu)$, omitting the subgroup and multiplier whenever  $\Gamma=\SL_2(\Z)$ and $\nu$ is trivial.  Harmonic Maass forms have singularities at the cusps; if one additionally allows singularities in $\H$, then one obtains polar harmonic Maass forms.  
\begin{definition*}%\label{def:polarharmonic}
Assume that $\kappa,\Gamma$, and $\nu$ satisfy the conditions 
given in the definition of harmonic Maass forms.  
 A \begin{it}polar harmonic Maass form\end{it} of weight $\kappa$ on $\Gamma$ with multiplier $\nu$ is a function $\mathcal{F}:\H\to\C$ which is real analytic outside of a discrete set of points satisfying the following: 
\noindent

\noindent
\begin{enumerate}[leftmargin=*]
\item 
For every $M\in\Gamma$, we have $\mathcal{F}|_{\kappa,\nu}M=\mathcal{F}$.
\item 
The function $\mathcal{F}$ is annihilated by $\Delta_{\kappa}$.
\item
For every $\mathfrak{z}\in \H$, there exists $n\in\N_0$ such that $(z-\mathfrak{z})^n\mathcal{F}(z)$ is bounded in some neighborhood of $z$.
\item 
The function $\mathcal{F}$ grows at most linear exponentially towards cusps of $\Gamma$.
\end{enumerate}
Again, if one allows in condition (2) a general eigenvalue, then one obtains a polar Maass form.  One says that $\mathcal{F}$ has a \begin{it}singularity at $\mathfrak{z}\in\H$\end{it} if the minimal $n$ in condition (3) is positive.
\end{definition*}
We denote the space of polar harmonic Maass forms whose singularities in $\H$ are all poles and which are bounded towards all cusps by $\H_{\kappa}(\Gamma,\nu)$ and the subspace of meromorphic modular forms with no poles at the cusps by $\mathbb{M}_{\kappa}(\Gamma,\nu)$.  We furthermore use the notation $\SS_{\kappa}(\Gamma,\nu)$ for the subspace of {\it meromorphic cusp forms}, 
i.e., those meromorphic modular forms $f$ for which $y^{\frac{\kappa}{2}}|f(z)|$ decays towards all cusps.
 An important subspace of $\H_{\kappa}(\Gamma,\nu)$ is obtained by noting that the hyperbolic Laplacian splits as
\begin{equation}\label{eqn:Deltasplit}
\Delta_\kappa=-\xi_{2-\kappa}\circ \xi_\kappa,
\end{equation}
where $\xi_{\kappa,z}:=2iy^{\kappa} \overline{\frac{\partial}{\partial \overline{z}}}$.  
If $\mathcal{F}$ satisfies weight $\kappa$ modularity, then $\xi_{\kappa}(\mathcal{F})$ 
is modular of 
 weight $2-\kappa$ and one sees from the decomposition \eqref{eqn:Deltasplit} that $\xi_{\kappa}(\mathcal{F})\in \mathbb{M}_{2-\kappa}(\Gamma,\overline{\nu})$ if $\mathcal{F}\in \H_{\kappa}(\Gamma,\nu)$.  It is thus natural to consider the subspace $\H_{\kappa}^{\operatorname{cusp}}(\Gamma,\nu)\subseteq \H_{\kappa}(\Gamma,\nu)$ consisting of those $\mathcal{F}$ for which $\xi_{\kappa}(\mathcal{F})$ is a cusp form.

The Fourier expansion of a polar harmonic Maass form $\mathcal{F}$ splits naturally into meromorphic and non-meromorphic parts.  
Roughly speaking, the non-meromorphic part is the unique part of the Fourier expansion given in (not necessarily integral) powers of $e(\mathfrak{z})$ times the incomplete gamma function $\Gamma(\kappa,y):=\int_y^{\infty} e^{-t}t^{\kappa-1} dt$.  To be more precise, if $0\leq \ell_0<1$ satisfies $\ell_0-\ell \in\Z$, then the \begin{it}non-meromorphic part\end{it} has the shape 
\begin{equation*}%\label{eqn:F-exp}
\mathcal{F}^-(z)=\sum_{\substack{n\gg -\infty\\n\neq \ell_0}}c^-(n)\Gamma\left(1-\kappa,\frac{4\pi \left(n-\ell_0\right)y}{L} \right) e^{-\frac{2\pi i \left(n-\ell_0\right)z}{L}}+\delta_{\ell_0=0} c^-(0)  y^{1-\kappa},
\end{equation*}
defined for $y$ sufficiently large, while the \begin{it}meromorphic part\end{it} $\mathcal{F}^+:=\mathcal{F}-\mathcal{F}^-$ can be written as $e(\ell_0 z /L)$ times an expansion in integral powers of $e(z/L)$.  These Fourier expansions hold whenever $y>\mathfrak{z}_2$ for every singularity $\mathfrak{z}\in\H$; in particular, if there is only one singularity $\mathfrak{z}$ modulo $\Gamma$, then the expansion holds whenever $y>\max(\mathfrak{z}_2,1/\mathfrak{z}_2)$.  Note that $\xi_{\kappa}(\mathcal{F})=\xi_{\kappa}(\mathcal{F}^-)$ implies that the coefficients $c^-(n)$ of the non-meromorphic part of $\mathcal{F}\in \H_{\kappa}^{\operatorname{cusp}}(\Gamma,\nu)$ are related to Fourier coefficients of a cusp form.  Specifically, for $\mathcal{F}\in \H_{\kappa}^{\operatorname{cusp}}(\Gamma,\nu)$, we have $c^-(n)=0$ for $n-\ell_0\leq 0$ and 
\begin{multline}\label{eqn:xiF}
\xi_{\kappa}\left( \sum_{n-\ell_0>0} c^-(n)\Gamma\left(1-\kappa,\frac{4\pi \left(n-\ell_0\right) y}{L}\right) e^{-\frac{2\pi i \left(n-\ell_0\right) z}{L}}\right)\\
 =-\left(\frac{4\pi}{L}\right)^{1-\kappa}\sum_{n-\ell_0>0}(n-\ell_0)^{1-\kappa}\overline{c^-(n)}e^{\frac{2\pi i(n-\ell_0) z}{L}}
\end{multline}
implies that $(n-\ell)^{1-\kappa}\overline{c^-(n)}$ are coefficients of a cusp form.  The coefficients of $\mathcal{F}^+$ are also related to coefficients of a weight $2-\kappa$ meromorphic modular form if $2-\kappa\in 2\N$ since $\mathcal{F}^-$ is annihilated by $D^{1-\kappa}$, where $D:=\frac{1}{2\pi i}\frac{\partial}{\partial z}$.  

For $\mathcal{F}\in H_{\kappa}$, we call the terms of the Fourier expansion which grow towards $i\infty$ the \begin{it}principal part\end{it} (at $i\infty$). In the same way we define principal parts at other cusps. For $\mathcal{F}\in \H_{\kappa}^{\operatorname{cusp}}$, the principal part at $i\infty$ is trivial, but there may be growth towards points $z\in \H$.  In particular, for $\mathcal{F}\in \H_{\kappa}^{\operatorname{cusp}}$ and $z\in\H$, there exist $c_1,\dots, c_{n_0}$ such that 
$$
\lim_{z\to \mathfrak{z}} \left(\mathcal{F}(z) - \sum_{n=1}^{n_0} c_n (z-\mathfrak{z})^{-n}\right)=O(1).
$$
We call $\sum_{n=1}^{n_0} c_n (\mathfrak{z}-z)^{-n}$ the \begin{it}principal part of $\mathcal{F}$ at $z$.\end{it}

\subsection{Poincar\'e series}\label{sec:Poincare}

An important tool to construct automorphic forms are \begin{it}Poincar\'e series\end{it}, which have a long history going back to Poincar\'e \cite{PoincareSeries}.  To be more precise, for $\kappa>2$ and $m\in\Z$, the classical weakly holomorphic Poincar\'e series are defined by (see (2b.1) of \cite{Pe1})
\begin{equation}\label{eqn:Pdef}
P_{\kappa,\ell+m,\Gamma,\nu}(z):={\displaystyle{\sum_{M\in \Gamma_{\infty}\backslash\Gamma}}} e^{\frac{2\pi i(\ell+m)z}{L}}\bigg|_{\kappa,\nu}M\in M_{\kappa}^!(\Gamma,\nu),
\end{equation}
where $\Gamma_{\infty}$ is the subgroup of $\Gamma$ generated by $T^L$. To construct harmonic Maass forms, for $\kappa<0$ and $m\in \Z$, we define the harmonic function 
$$
\varphi_{m}(z):=\mathcal{M}_{\kappa,1-\frac{\kappa}{2}}\left(\frac{4\pi (m+\ell_0) y}{L}\right)e^{\frac{2\pi i (m+\ell_0)x}{L}}
$$
with $(w\in\R\setminus\{0\},s\in\C)$ 
$$
\mathcal{M}_{\kappa,s}(w):=|w|^{-\frac{\kappa}{2}}M_{\frac{\kappa}{2}\sgn(w),s-\frac{1}{2}}(|w|),
$$
where $M_{\mu, \nu}$ is the $M$-Whittaker function.  This follows a construction of Niebur \cite{Niebur}, who considered eigenfunctions but restricted to weight 0.  In this paper, we only require the corresponding Poincar\'e series for integral weight $\kappa=2-2k$ with $k\in \N_{
>1
}$, trivial multiplier, and $\Gamma=\SL_2(\Z)$, so for the remainder of this section, we restrict to that case.  We define 
\begin{equation}\label{eqn:calFdef}
\mathcal{F}_{2-2k,m}:=-\frac{
\sgn(m)
}{(2k-1)!}\sum_{M\in\Gamma_{\infty}\backslash\SL_2(\Z)} 
\varphi_{m}
\bigg|_{2-2k}M\in H_{2-2k}.
\end{equation}
For $
m<0
$, the functions $\mathcal{F}_{2-2k,m}$ have principal parts $
2e(mz)
$ at $i\infty$. Furthermore, a straightforward calculation, using that $\xi$ commutes with the slash action, yields 
\begin{equation}\label{eqn:xiFP}
\xi_{2-2k}\left(\mathcal{F}_{2-2k,m}\right) = -\frac{(4\pi m)^{2k-1}}{(2k-2)!}P_{2k,-m}.
\end{equation}
This gives a relation between the $n$th Fourier coefficient $a_{2k,-m}(n)$ of $P_{2k,-m}$ and the $n$th coefficient $c_{2-2k,m}^-(n)$ of $\mathcal{F}_{2-2k,m}^{-}$. Comparing the coefficients on both sides of \eqref{eqn:xiFP}, using \eqref{eqn:xiF} to compute the coefficients of the left-hand side,
 we conclude that 
\begin{equation}\label{eqn:cnan}
c_{2-2k,m}^-(n) = \frac{1}{(2k-2)!}\left(\frac{m}{n}\right)^{2k-1}\overline{a_{2k,-m}(n)}.
\end{equation}
An inspection of the Fourier expansion yields that $a_{2k,-m}(n)$ is real. 
We therefore obtain 
\begin{equation}\label{eqn:duality}
c_{2-2k,m}^-(n) =\frac{1}{(2k-2)!}\left(\frac{m}{n}\right)^{2k-1}a_{2k,-m}(n)=\sgn(m)\frac{a_{2k,n}(-m)}{(2k-2)!}.
\end{equation}
where the second identity is so-called reciprocity, which follows by an argument similar to (5c.2) of \cite{Pe1}. Further similar results are contained in Theorem 2 of \cite{DIT} and follow by Theorem 3.7 of \cite{Fay}.  
It turns out that the $n$th coefficient $c_{2-2k,m}^+(n)$ of $\mathcal{F}_{2-2k,m}^+$ also satisfies another relation known as duality (cf. Theorem 1 of \cite{GuerzhoyGrids}), given in this case by 
\begin{equation}\label{eqn:dualityMaass}
c_{2-2k,m}^+(n)=\sgn(m)a_{2k,-n}(-m).
\end{equation}
A number of duality results have been of recent interest to many authors including Zagier \cite{ZagierTraces}, Ahlgren--Kim \cite{AhlgrenKim}, Duke--Jenkins \cite{DJ}, and Miller--Pixton \cite{MP}.  The correlations of Fourier coefficients given in \eqref{eqn:cnan} and \eqref{eqn:dualityMaass} play an important role in finding a nice formula for $y^{\kappa-1}\Psi_{\kappa}(\mathfrak{z},z)$ in the integral weight case.

\section{Polar harmonic Maass forms of Petersson}\label{sec:conj}

As noted in the introduction, the proof of Theorem \ref{Fouriercoefficients} relies on certain polar harmonic Maass forms defined by Petersson.  Although polar harmonic Maass forms were not investigated in Petersson's time and hence he did not note the harmonic properties of his function, he recognized the modularity properties.  Although we only need integral weight forms, Petersson asked about the general role that these functions play in real weight, so we introduce them here in that generality.  

To explicitly define the required functions, we first need some notation.  Following (5b.7) of \cite{Pe1}, for
 $\kappa\in\R_{>2}$, a congruence subgroup $\Gamma$ of $\SL_2(\Z)$ containing $T^L$ for some $L\in \N$ and $T:=\left(\begin{smallmatrix}1&1\\0&1\end{smallmatrix}\right)$, and a multiplier system $\nu$ satisfying $|\nu|=1$ and $\nu(T^{L})=e^{2\pi i\ell}$ with $0< \ell\leq  1$, define
\begin{equation}\label{eqn:Kdef}
K_{\kappa,\Gamma,\nu}\left(\mathfrak{z},z\right):=\frac{iL}{2\pi}(2iy)^{\kappa-1}\sum_{\substack{n\in\Z\\ n-\ell<0}} \mathcal{I}_{\kappa,n-\ell}\left(\frac{y}{L}\right)e^{\frac{2\pi i\left(n-\ell\right)z}{L}}P_{\kappa,\ell-n,\Gamma,\nu}(\mathfrak{z}),
\end{equation}
where $P_{\kappa,m,\Gamma,\nu}(\mathfrak{z})$ is the usual $m$th Poincar\'e series, $\mathcal{I}_{\kappa,r}(y)$ is a certain non-holomorphic integral defined in \eqref{eqn:Idef}, $\mathfrak{z}\in\H$, and $z=x+iy\in\H$ throughout.

Placing it into the theory of polar harmonic Maass forms yields a general formula for $K_{\kappa,\Gamma,\nu}$.  In order to give this, we define
$$
\Omega_{\kappa,\Gamma,\nu}(\mathfrak{z},z):=\sum_{M\in \Gamma} \frac{1}{\left(z-\overline{\mathfrak{z}}\right)^{\kappa}}\bigg|_{\kappa,\nu,z} M.
$$
\begin{theorem}\label{ConjectureTheorem}
For every $2<\kappa\in \R$, we have  
\begin{equation}\label{eqn:conj}
K_{\kappa,\Gamma,\nu}\left(\mathfrak{z},z\right)=\frac{i(\kappa-1)L}{2\pi}e^{-\pi i \kappa} \int_{-\overline{z}}^{i\infty}  \Omega_{\kappa, \Gamma, \overline{\nu}}\left( \overline{\mathfrak{z}}, -w\right)\left(z+w\right)^{\kappa-2} dw.
\end{equation}
\end{theorem}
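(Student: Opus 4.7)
The plan is to prove \eqref{eqn:conj} by expanding both sides as Fourier/Poincaré series in the two variables and matching them term by term, with the key bridge being a closed-form evaluation of the resulting inner contour integral.

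First, I would analyze the integrand on the right-hand side. As a function of $-w$, the series
$$
\Omega_{\kappa,\Gamma,\overline{\nu}}(\overline{\mathfrak{z}},-w)=\sum_{M\in\Gamma}\frac{1}{(-w-\mathfrak{z})^{\kappa}}\bigg|_{\kappa,\overline{\nu},-w}M
$$
transforms like a weight $\kappa$ modular form in $w$ for $\Gamma$. I would use the coset decomposition $\Gamma=\bigsqcup_{M\in\Gamma_{\infty}\backslash\Gamma}\Gamma_{\infty}M$ and Poisson summation over the $\Gamma_{\infty}$-translations in $w$ to derive its Fourier expansion. By a standard unfolding/reciprocity argument, as in Section 5 of \cite{Pe1}, the $n$th Fourier coefficient can be identified with the Poincaré series $P_{\kappa,\ell-n,\Gamma,\nu}(\mathfrak{z})$ in $\mathfrak{z}$, up to explicit gamma-function constants and branch factors of $e^{\pm\pi i\kappa}$.

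Next, I would substitute this expansion into the integral on the right-hand side of \eqref{eqn:conj} and interchange sum and integration. Term by term, this reduces to evaluating
$$
\int_{-\overline{z}}^{i\infty} e^{\frac{2\pi i(n-\ell)w}{L}}(z+w)^{\kappa-2}\, dw
$$
along the vertical ray from $-\overline{z}$ to $i\infty$. After the change of variables $w=-\overline{z}+it$ (so that $z+w=2iy+it$), the integrand becomes an exponential against a power of $2iy+it$, which is a classical Whittaker-type integral; it evaluates in closed form to a constant multiple of $e^{2\pi i(n-\ell)z/L}(2iy)^{\kappa-1}\mathcal{I}_{\kappa,n-\ell}(y/L)$. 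Only indices with $n-\ell<0$ contribute (positive-frequency terms either vanish from the integration contour or are absent in the Fourier expansion after accounting for the branch of $(-w-\mathfrak{z})^{-\kappa}$), and summing these back exactly reproduces \eqref{eqn:Kdef}. A careful bookkeeping shows that the prefactor $\frac{i(\kappa-1)L}{2\pi}e^{-\pi i\kappa}$ on the right balances the prefactor $\frac{iL}{2\pi}(2iy)^{\kappa-1}$ on the left together with the gamma factors produced by the Whittaker integral.

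The principal obstacle is the justification of the interchange of the (only conditionally convergent, since $\kappa\in\R_{>2}$) sum defining $\Omega_{\kappa,\Gamma,\overline{\nu}}$ with both the Fourier-expansion step and the contour integral. A natural workaround is Hecke--Petersson regularization: insert a damping factor $|cz+d|^{-2s}$ into the defining sum, establish the identity for $\operatorname{Re}(s)$ large where everything converges absolutely, and analytically continue to $s=0$, appealing to Petersson's original convergence analysis in \cite{Pe1}. A secondary, genuinely delicate, issue is the careful tracking of branch cuts for the non-integer powers $(-w-\mathfrak{z})^{-\kappa}$, $(z+w)^{\kappa-2}$, and $(2iy)^{\kappa-1}$; consistency among these choices is precisely what forces the factor $e^{-\pi i\kappa}$ on the right-hand side, and getting the sign right in each transformation is where most of the real work of the proof will lie.
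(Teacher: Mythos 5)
Your strategy is sound in outline, but it is a genuinely different, and considerably more computational, route than the one the paper takes. The paper never Fourier-expands $\Omega$ in $w$ and never evaluates a contour integral explicitly: it proves in Proposition \ref{prop:HkPsi} that $K_{\kappa,\Gamma,\nu}$ is the non-meromorphic part of $\frac{iL}{2\pi}(2iy)^{\kappa-1}\Psi_{\kappa,\Gamma,\nu}(\mathfrak{z},z)$, computes in \eqref{eqn:xi} that $\xi_{2-\kappa,z}$ of that form is $(\kappa-1)e^{\pi i\kappa}\Omega_{\kappa,\Gamma,\nu}(\mathfrak{z},z)$, notes that $\xi_{2-\kappa}$ applied to the non-holomorphic Eichler integral of a weight $\kappa$ cusp form returns that cusp form, and concludes that the two sides of \eqref{eqn:conj} have the same image under $\xi_{2-\kappa,z}$; since both are pure incomplete-gamma expansions, on which $\xi_{2-\kappa,z}$ is injective, they agree. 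This uniqueness argument sidesteps every computation you propose. Your route must actually carry out (i) the reciprocity step identifying the $w$-Fourier coefficients of $\Omega_{\kappa,\Gamma,\overline{\nu}}(\overline{\mathfrak{z}},-w)$ with the Poincar\'e series $P_{\kappa,\ell-n,\Gamma,\nu}(\mathfrak{z})$ — essentially the unfolding manipulation of \eqref{eqn:slashz} combined with the Lipschitz formula, which is where the gamma-function constants arise — and (ii) the identification of the ray integral $\int_0^\infty e^{-2\pi mt/L}(2iy+it)^{\kappa-2}\,dt$, an upper incomplete gamma function, with Petersson's horizontal contour integral $\mathcal{I}_{\kappa,n-\ell}\left(\frac{y}{L}\right)$ from \eqref{eqn:Idef}, which requires deforming that contour around the branch point at $t=-2iy/L$. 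Both steps are doable, and your approach has the virtue of being constructive rather than resting on a uniqueness principle, but each is a page of branch-cut bookkeeping that the $\xi$-operator argument replaces with three lines. One small correction: for $\kappa>2$ the series defining $\Omega_{\kappa,\Gamma,\nu}$ converges absolutely (this is the classical convergent range for Poincar\'e series), so no Hecke--Petersson regularization is needed; uniform convergence on compacta together with the exponential decay of the integrand along the ray already justifies the interchanges.
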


\begin{remarks}
\noindent

\noindent
\begin{enumerate}[leftmargin=*]
\item 
Petersson \cite{Pe1} gave an alternate formula for $K_{\kappa,\Gamma,\nu}$ for $\kappa\in \N$, but noted that his formula does not extend to arbitrary real weight.  Although Theorem \ref{ConjectureTheorem} does not match his identity in integral weight, it places $K_{\kappa,\Gamma,\nu}$ into a larger framework.  

\item If $k\in \N$, then one can also use \eqref{eqn:xiH2} to rewrite $K_{2k}$ as 
$$
\frac{1}{(2k-2)!}\int_{-\overline{z}}^{i\infty} \frac{\partial^{2k-1}}{\partial w^{2k-1}} \left(H_{2k}\left(\mathfrak{z},-w\right)\right)\left(z+w\right)^{2k-2} dw,
$$
where here and throughout we omit the subgroup and multiplier in the notation whenever $\Gamma=\SL_2(\Z)$ and the multiplier is trivial.
\item
The Poincar\'e series $z\mapsto \Omega_{\kappa,\Gamma,\nu}(\mathfrak{z},z)$ were extensively studied in \cite{Pe4} and shown to be cusp forms.  Their analytic continuation to weight $2$ was the main goal in \cite{Pe3}.
\item  
For a weight $\kappa$ cusp form $f$, the integral
$$
(2i)^{1-\kappa}\int_{-\overline{z}}^{i\infty} \overline{f(-\overline{w})}\left(z+w\right)^{\kappa-2}dw
$$
is called the \begin{it}non-holomorphic Eichler integral\end{it} of $f$.  Hence Theorem \ref{ConjectureTheorem} states that $K_{\kappa,\Gamma,\nu}$ is (up to a non-zero constant) the non-holomorphic Eichler integral of the Poincar\'e series $\Omega_{\kappa,\Gamma,\nu}$.%\overline{\nu}}\left(\overline{\mathfrak{z}},z\right)$, where we note that the definition of $\Omega_{\kappa,\Gamma,\nu}$ may be extended to $\mathfrak{z}\in -\H$ by conjugating.
\end{enumerate}

\end{remarks}

\subsection{Petersson's Poincar\'e series}
The main goal of this section is to realize the function $K_{\kappa,\Gamma,\nu}$ as the non-meromorphic part of a polar harmonic Maass form in order to prove Theorem \ref{ConjectureTheorem}.  To fill in the remaining piece of definition \eqref{eqn:Kdef}, for $r<0$ and $-2y<\alpha<0$, we define 
(see (5b.5) of \cite{Pe1}) 
\begin{equation}\label{eqn:Idef}
\mathcal{I}_{\kappa,r}(y):=\int_{i\alpha -\infty}^{i\alpha +\infty} \frac{e^{2\pi i r t}}{\left(t+2iy\right)^{\kappa-1}} \frac{dt}{t}.
\end{equation}

One of the main steps in proving Theorem \ref{ConjectureTheorem} is to connect other functions of Petersson to the theory of harmonic Maass forms.  These functions are defined by (here we renormalize $H_{\kappa,\Gamma,\nu}$ by multliplying (5b.3) of \cite{Pe1} by $\frac{L}{2\pi i}$) 
\begin{align}
\label{eqn:Hdef}
H_{\kappa,\Gamma,\nu}(\mathfrak{z},z) &:= \sum_{M \in \Gamma_{\infty}\backslash \Gamma} \frac{e^{\frac{2\pi i}{L} (1-\ell)(z-\mathfrak{z})}}{1-e^{\frac{2 \pi i}{L} (z-\mathfrak{z})}}\Bigg|_{\kappa,\nu,\mathfrak{z}} M,\\
\label{eqn:Psidef} \Psi_{\kappa,\Gamma,\nu}(\mathfrak{z},z)&:=\sum_{M\in \Gamma} \frac{1}{(\mathfrak{z}-z)\left(\mathfrak{z}-\overline{z}\right)^{\kappa-1}}\bigg|_{\kappa,\nu,\mathfrak{z}} M.
\end{align}
In particular, the functions $y^{\kappa-1}\Psi_{\kappa,\Gamma,\nu}(\mathfrak{z},z)$ are polar harmonic Maass forms.

\begin{proposition}\label{prop:HkPsi}
Assume that $\kappa$, $\Gamma\subseteq\SL_2(\Z)$, and the multiplier system $\nu$ satisfy the conditions given above.  Then $z\mapsto y^{\kappa-1}\Psi_{\kappa,\Gamma,\nu}(\mathfrak{z},z)
\in \H_{2-\kappa}^{\operatorname{cusp}}\left(\Gamma,\overline{\nu}\right)$.  Its meromorphic part equals
$$
\frac{2\pi i}{(2i)^{\kappa-1}L}H_{\kappa,\Gamma,\nu}(\mathfrak{z},z).
$$
\end{proposition}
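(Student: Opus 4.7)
I verify the four defining properties of $\H_{2-\kappa}^{\operatorname{cusp}}(\Gamma,\overline{\nu})$ termwise on the summand
$$
y^{\kappa-1}\psi_{\kappa,\mathfrak{z}}(z):=\frac{y^{\kappa-1}}{(\mathfrak{z}-z)(\mathfrak{z}-\overline{z})^{\kappa-1}},
$$
and then compute the Fourier expansion in $z$ for $y$ large to isolate the meromorphic part. The key transformation identity is
$$
(\operatorname{Im}Mz)^{\kappa-1}\psi_{\kappa,M\mathfrak{z}}(Mz)=(c\mathfrak{z}+d)^{\kappa}(cz+d)^{2-\kappa}y^{\kappa-1}\psi_{\kappa,\mathfrak{z}}(z),
$$
which is immediate from $Mw-Mw'=(w-w')/((cw+d)(cw'+d))$ together with $\operatorname{Im}(Mz)=y/|cz+d|^{2}$. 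This says that $y^{\kappa-1}\psi_{\kappa,\mathfrak{z}}(z)$ is invariant under the diagonal slash in weights $(\kappa,2-\kappa)$ with multipliers $(\nu,\overline{\nu})$; reindexing $M\mapsto MN$ in the defining sum translates this into weight $2-\kappa$ modularity in $z$ with multiplier $\overline{\nu}$. Absolute uniform convergence on compacta avoiding $\Gamma\cdot\mathfrak{z}$ is the standard Poincar\'e estimate for $\kappa>2$ and locates the singularities as simple poles contributed one per term at $\Gamma\cdot\mathfrak{z}$.

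\textbf{Harmonicity and cusp image.} A short $\partial_{\overline{z}}$-computation, using $\partial_{\overline{z}}y=i/2$ and the cancellation $\mathfrak{z}-\overline{z}=(\mathfrak{z}-z)+2iy$, gives the clean termwise identity
$$
\xi_{2-\kappa,z}\bigl(y^{\kappa-1}\psi_{\kappa,\mathfrak{z}}(z)\bigr)=\frac{\kappa-1}{(\overline{\mathfrak{z}}-z)^{\kappa}},
$$
which is holomorphic in $z$ and hence annihilated by $\xi_{\kappa}$, so $\Delta_{2-\kappa}=-\xi_{\kappa}\circ\xi_{2-\kappa}$ kills each term. Summing and reindexing shows that $\xi_{2-\kappa,z}(y^{\kappa-1}\Psi_{\kappa,\Gamma,\nu})$ equals $(\kappa-1)$ times the weight $\kappa$ Poincar\'e series $\Omega_{\kappa,\Gamma,\overline{\nu}}(\overline{\mathfrak{z}},\cdot)$, which is a cusp form by \cite{Pe4}. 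This simultaneously verifies the $\xi$-image cusp-form condition and, via the shape of the corresponding $\xi$-preimage (an incomplete gamma series), gives boundedness at each cusp of $\Gamma$.

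\textbf{Extracting the meromorphic part.} The main calculation is the Fourier expansion in $z$. I would unfold via $\Gamma=\bigsqcup_{N\in\Gamma_{\infty}\backslash\Gamma}\Gamma_{\infty}N$, so the $\Gamma$-sum becomes an outer sum over $N$ of $|_{\kappa,\nu,\mathfrak{z}}N$-slashes of the inner $\Gamma_{\infty}$-sum
$$
y^{\kappa-1}\sum_{m\in\Z}\frac{e^{-2\pi i m\ell}}{(\mathfrak{w}+mL-z)(\mathfrak{w}+mL-\overline{z})^{\kappa-1}},\qquad \mathfrak{w}:=N\mathfrak{z}.
$$
This inner sum I evaluate by a Poisson/Lipschitz-type summation: express each summand as a contour integral of $e^{2\pi i rt}/[t(t+2iy)^{\kappa-1}]$, apply Poisson summation in $m$, and shift the contour across the simple pole at $t=0$. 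The residue at $t=0$ sums to a geometric series in $e^{2\pi i(z-\mathfrak{w})/L}$ that, after reassembling the outer $N$-sum, is precisely $\tfrac{2\pi i}{(2i)^{\kappa-1}L}H_{\kappa,\Gamma,\nu}(\mathfrak{z},z)$, matching the geometric factor in \eqref{eqn:Hdef}; the shifted contour contributes an expansion carrying $\mathcal{I}_{\kappa,n-\ell}(y/L)\,P_{\kappa,\ell-n,\Gamma,\nu}(\mathfrak{z})$-weights, which is the non-meromorphic part (and will later supply $K_{\kappa,\Gamma,\nu}$ as input to Theorem \ref{ConjectureTheorem}).

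\textbf{Main obstacle.} The delicate point is the contour-shift/Poisson-summation step: isolating the simple pole at $t=0$ cleanly and justifying that the splitting of each $\Gamma_{\infty}$-sum into residue plus shifted integral commutes with the outer $N$-sum in the regime where $y$ exceeds the relevant $\operatorname{Im}(N\mathfrak{z})$. This is exactly what pins down the constant $\tfrac{2\pi i}{(2i)^{\kappa-1}L}$ and separates the meromorphic from non-meromorphic contributions; once done, the stated formula for the meromorphic part follows by inspection.
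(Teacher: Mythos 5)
Your proposal is correct in outline, and its first half (modularity in $z$, the termwise identity $\xi_{2-\kappa,z}\bigl(y^{\kappa-1}(\mathfrak{z}-z)^{-1}(\mathfrak{z}-\overline{z})^{1-\kappa}\bigr)=(\kappa-1)(\overline{\mathfrak{z}}-z)^{-\kappa}$, and the identification of the $\xi$-image with the cusp form $\Omega$) is essentially the paper's argument; note only that the paper's bookkeeping in \eqref{eqn:xi} produces $(\kappa-1)e^{\pi i\kappa}\Omega_{\kappa,\Gamma,\nu}(\mathfrak{z},z)$, so your constant and your placement of the conjugations on $\nu$ and $\mathfrak{z}$ need to be checked against the branch convention for $(\cdot)^{-\kappa}$ when $\kappa\notin\Z$. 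Where you genuinely diverge is in extracting the meromorphic part. The paper does not compute the Fourier expansion of $\Psi_{\kappa,\Gamma,\nu}$ at all: it imports Petersson's decomposition \eqref{eqn:split} (his (5b.6)) as a known identity, observes that $H_{\kappa,\Gamma,\nu}$ is meromorphic and that $K_{\kappa,\Gamma,\nu}$ is therefore harmonic and decays as $y\to\infty$, and then uses the fact that by definition \eqref{eqn:Kdef} the expansion of $K_{\kappa,\Gamma,\nu}$ is supported on negative powers of $e(x)$ to force it to consist entirely of incomplete-gamma terms, i.e.\ to be precisely the non-meromorphic part. You instead propose to rederive the decomposition from scratch by unfolding over $\Gamma_\infty\backslash\Gamma$, representing each term by the contour integral underlying $\mathcal{I}_{\kappa,r}$ in \eqref{eqn:Idef}, and splitting residue-at-$t=0$ (the geometric series, hence $H$) from the shifted contour (the $\mathcal{I}$-weighted series, hence $K$). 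That route is self-contained and would in addition yield the explicit Fourier expansion and the constant $\tfrac{2\pi i}{(2i)^{\kappa-1}L}$ directly, but the step you flag as the ``main obstacle'' --- the Poisson summation, the contour shift past $t=0$, and the interchange with the outer $N$-sum in the regime $y>\operatorname{Im}(N\mathfrak{z})$ --- is exactly the content of Petersson's derivation of (5b.6)--(5b.7), so as written your proof relocates rather than removes the hard computation; it would need to be carried out in full (or, as in the paper, cited) before the argument is complete. The trade-off is clear: the paper's route is short and soft but leans on an external identity, while yours is longer, constructive, and independent of \cite{Pe1} once the contour analysis is supplied.
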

\begin{remarks}
\noindent

\noindent
\begin{enumerate}[leftmargin=*]
\item
In (5b.6) of \cite{Pe1}, the Poincar\'e series 
\eqref{eqn:Kdef}, \eqref{eqn:Hdef}, and \eqref{eqn:Psidef} are related by  the decomposition (recall that we have renormalized $H_{\kappa, \Gamma,\nu}$ by dividing by $\frac{2\pi i}{L}$) 
\begin{equation}\label{eqn:split}
(2iy)^{\kappa-1}\Psi_{\kappa,
\Gamma,\nu
}(\mathfrak{z},z)=\frac{2\pi i}{L} H_{\kappa,\Gamma,\nu}(\mathfrak{z},z) -\frac{2\pi i}{L}K_{\kappa,\Gamma,\nu}(\mathfrak{z},z).
\end{equation}
By Proposition \ref{prop:HkPsi}, \eqref{eqn:split} is a natural splitting into meromorphic and non-meromorphic parts.  Note also that by \eqref{eqn:split}, $K_{\kappa,\Gamma,\nu}$ has a representation as a Poincar\'e series (after rewriting $\Psi_{\kappa,\Gamma,\nu}$ as a Poincar\'e series over $\Gamma_{\infty}\backslash\Gamma$).
\item
Petersson defined $\Psi_{\kappa,\Gamma,\nu}$ for Fuchsian groups of the first kind which contain $T^L$.  We restrict to congruence subgroups to simplify the development of the theory of polar harmonic Maass forms, but the proof of Proposition \ref{prop:HkPsi} is formal, so one should be able to prove a version of Proposition \ref{prop:HkPsi} in the generality of the full definition.
\item  The functions $H_{\kappa,\Gamma,\nu}$ are a special case of a more general family considered in \cite{PeTheorie}. In particular, $H_{\kappa,\Gamma,\nu}$ is $e^{2\pi i (1-\ell) z/L}$ times the function defined in (31) of \cite{PeTheorie} with $F(t)=t^{m}(t-e^{2\pi i z/L})^{-1}$, where $m=0$ if $\ell\neq 1$ and $m=1$ if $\ell=1$. 
\end{enumerate}
\end{remarks}
\begin{proof}[Proof of Proposition \ref{prop:HkPsi}] 
From condition 5 on page 63 of \cite{Pe1}, we conclude that $z\mapsto y^{\kappa-1} \Psi_{\kappa,\Gamma,\nu}(\mathfrak{z},z)$ transforms like a modular form of weight $2-\kappa$ with multiplier $\overline{\nu}$.  Furthermore, we compute
\begin{equation*}%\label{eqn:xiPsi}
\xi_{2-\kappa,z}\left(y^{\kappa-1}\left(\mathfrak{z}-\overline{z}\right)^{1-\kappa}\left(\mathfrak{z}-z\right)^{-1}\right)=(\kappa-1)\left(\overline{\mathfrak{z}}-z\right)^{-\kappa}, \quad \text{ and hence}\end{equation*}
$$
\xi_{2-\kappa,z}\left(y^{\kappa-1}\Psi_{\kappa,\Gamma, \nu}(\mathfrak{z},z)\right) = (\kappa-1)\sum_{M\in\Gamma}\overline{\nu}(M)^{-1} j(M,\overline{\mathfrak{z}})^{-\kappa} \left(M\overline{\mathfrak{z}}-z\right)^{-\kappa}.
$$

We now use (23) of \cite{Pe2}, followed by (2a.4) of \cite{Pe1}, to rewrite the sum over $M\in\Gamma$ as 
\begin{equation}\label{eqn:slashz}
\sum_{M\in\Gamma} \nu(M)j\left(M,M^{-1}z\right)^{\kappa}\left(\overline{\mathfrak{z}}-M^{-1}z\right)^{-\kappa}=\sum_{M\in\Gamma} \nu\left(M^{-1}\right)^{-1}j(M^{-1},z)^{-\kappa}\left(\overline{\mathfrak{z}}-M^{-1}z\right)^{-\kappa}.
\end{equation}
Using that $(\overline{\mathfrak{z}}-M^{-1}z)^{-\kappa}=e^{\pi i \kappa}(M^{-1}z-\overline{\mathfrak{z}})^{-\kappa}$, \eqref{eqn:slashz} implies that 
\begin{equation}\label{eqn:xi}
\xi_{2-\kappa,z}\left(y^{\kappa-1}\Psi_{\kappa,\Gamma, \nu}(\mathfrak{z},z)\right) = (\kappa-1)e^{\pi i \kappa}\sum_{M\in\Gamma} \left(z-\overline{\mathfrak{z}}\right)^{-\kappa}\bigg|_{\kappa,\nu,z}M=(\kappa-1)e^{\pi i \kappa}
\Omega
_{\kappa,\Gamma, \nu}(\mathfrak{z},z),
\end{equation}
which is a cusp form in $z$.

Using \eqref{eqn:Deltasplit}, we conclude that $
z\mapsto 
y^{\kappa-1}\Psi_{\kappa,\Gamma,\nu}(\mathfrak{z},z)\in \H_{2-\kappa}^{\operatorname{cusp}}(\Gamma,\overline{\nu})$.  Noting that $
z \mapsto 
H_{\kappa,\Gamma,\nu}
(\mathfrak{z},z)
$ is meromorphic, and hence in particular harmonic, \eqref{eqn:split} implies that $K_{\kappa,\Gamma,\nu} (\mathfrak{z}, z)$ is also harmonic. 
Moreover, 
as $y\to \infty$, 
$K_{\kappa,\Gamma,\nu} (\mathfrak{z}, z)$ decays by \eqref{eqn:split} and the fact that  \eqref{eqn:Hdef} and $y^{\kappa-1}$ times \eqref{eqn:Psidef} vanish as $y \to \infty$.  We conclude that there exist $b_{n}=b_n(\mathfrak{z}),\ a_{n}=a_n(\mathfrak{z})\in \C$ such that
$$
K_{\kappa,\Gamma,\nu}(\mathfrak{z},z) = \sum_{n-\ell_0 \geq 0}b_{n} e^{\frac{2\pi i}{L} \left(n-\ell_0\right) z} + \sum_{n-\ell_0>0} a_{n} \Gamma\left(\kappa-1,\frac{4\pi \left(n-\ell_0\right)y}{L}  \right) e^{-\frac{2\pi i}{L}\left(n-\ell_0\right) z}.
$$
By \eqref{eqn:Kdef}, the coefficients are only supported on negative powers of $e(x)$ and hence $K_{\kappa, \Gamma, \nu}$ is an expansion in incomplete gamma functions.  Since $
z\mapsto 
H_{\kappa,\Gamma,\nu}
(\mathfrak{z},z)
$ is meromorphic, we conclude that $K_{\kappa,\Gamma,\nu}$ is precisely the non-meromorphic part of $\frac{iL}{2\pi}(2iy)^{\kappa-1}\Psi_{\kappa,\Gamma,\nu}(\mathfrak{z},z)$, giving the claim. 
\end{proof}
Proposition \ref{prop:HkPsi} is the main step needed to prove Theorem \ref{ConjectureTheorem}.
\begin{proof}[Proof of Theorem \ref{ConjectureTheorem}]
By Proposition \ref{prop:HkPsi}, $K_{\kappa,\Gamma,\nu} (\mathfrak{z}, z)$ is the non-meromorphic part of the function $\frac{i(2i)^{\kappa-1}L}{2\pi}y^{\kappa-1}\Psi_{\kappa,\Gamma,\nu}(\mathfrak{z},z)$. An easy calculation gives that for a weight $\kappa$ cusp form $f$, we have 
$$
\xi_{2-\kappa}\left((2i)^{1-\kappa}\int_{-\overline{z}}^{i\infty} \overline{f(-\overline{w})}\left(z+w\right)^{\kappa-2}dw\right) = f(z).
$$
By \eqref{eqn:xi}, we conclude that both sides of \eqref{eqn:conj} have the same image under $\xi_{2-\kappa,z}$.  Moreover, each side has an expansion in incomplete gamma functions, and such 
a series is annihilated by $\xi_{2-\kappa,z}$ if and only if the function is identically zero, yielding the claim.
\end{proof}

\subsection{The case of integral weight}
From now on we assume even integral weight, $\Gamma=\SL_2(\Z)$, and trivial multiplier.  We consider the polar harmonic  Maass form completion of $H_{2k}$ in another light and find an even simpler representation for it.  For this, define 
\begin{equation}\label{eqn:HtauHtaubar}
\mathcal{H}_{2k}\left(\mathfrak{z},z\right) := H_{2k}(\mathfrak{z},z) + \sum_{r=0}^{2k-2}\frac{(2iy)^r }{r!}\frac{\partial^r}{\partial\overline{z}^r} H_{2k}\left(\mathfrak{z},\overline{z}\right).
\end{equation}
Note that although we assume $z\in \H$ in the definition \eqref{eqn:Hdef} of $H_{2k}$, this function is well-defined for all $z\in\C$, and hence \eqref{eqn:HtauHtaubar} is meaningful.

\begin{proposition}\label{prop:modulartau}
We have $\mathfrak{z}\mapsto \mathcal{H}_{2k}(\mathfrak{z},z)\in \mathbb{M}_{2k}$ and $z\mapsto\mathcal{H}_{2k}(\mathfrak{z},z)\in \H_{2-2k}^{\operatorname{cusp}}$.  Furthermore
\begin{equation}\label{eqn:HkPsi}
\mathcal{H}_{2k}(\mathfrak{z},z) =-\frac{i(2i)^{2k-1}}{2\pi}  y^{2k-1}\Psi_{2k}(\mathfrak{z},z).
\end{equation}
\end{proposition}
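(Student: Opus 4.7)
The plan is to prove \eqref{eqn:HkPsi} first, from which the two structural claims will follow. Applying Proposition \ref{prop:HkPsi} with $\kappa=2k$, $L=1$, and trivial multiplier, the function $-\frac{i(2i)^{2k-1}}{2\pi}y^{2k-1}\Psi_{2k}(\mathfrak{z},z)$ lies in $\H_{2-2k}^{\operatorname{cusp}}$ as a function of $z$ and has meromorphic part equal to $H_{2k}(\mathfrak{z},z)$, matching the meromorphic part of $\mathcal{H}_{2k}(\mathfrak{z},z)$ by construction. Since $\xi_{2-2k,z}$ is injective on the non-meromorphic parts of elements of $\H_{2-2k}^{\operatorname{cusp}}$ (by the coefficient relation \eqref{eqn:xiF}), it then suffices to verify that $\xi_{2-2k,z}$ agrees on both sides; by \eqref{eqn:xi} the right-hand side has image $-\frac{i(2i)^{2k-1}(2k-1)}{2\pi}\Omega_{2k}(\mathfrak{z},z)$.

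For the left-hand side, $H_{2k}(\mathfrak{z},z)$ is annihilated by $\xi_{2-2k,z}$. Using $\partial_{\overline{z}}(2iy)^r = -r(2iy)^{r-1}$, I would apply $\partial_{\overline{z}}$ to the sum in \eqref{eqn:HtauHtaubar} and observe that it telescopes to the single endpoint $\frac{(2iy)^{2k-2}}{(2k-2)!}\partial_{\overline{z}}^{2k-1} H_{2k}(\mathfrak{z},\overline{z})$. Conjugating, multiplying by $2iy^{2-2k}$, and using $\overline{(2iy)^{2k-2}}=(2iy)^{2k-2}$ (since $2k-2$ is even) yields
$$
\xi_{2-2k,z}\big(\mathcal{H}_{2k}(\mathfrak{z},z)\big) = \frac{(2i)^{2k-1}}{(2k-2)!}\,\overline{\partial_{\overline{z}}^{2k-1} H_{2k}(\mathfrak{z},\overline{z})}.
$$
The main obstacle is identifying this conjugated derivative as a multiple of $\Omega_{2k}(\mathfrak{z},z)$. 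Differentiating the cotangent partial-fraction identity $2k-1$ times produces
$$
\partial_w^{2k-1}\frac{1}{1-e^{2\pi i(w-\mathfrak{z})}} = -\frac{(2k-1)!}{2\pi i}\sum_{m\in\Z}\frac{1}{(w-\mathfrak{z}-m)^{2k}},
$$
and substituting this into \eqref{eqn:Hdef} while merging the sums over $m\in\Z$ and $M\in\Gamma_\infty\backslash\SL_2(\Z)$ into a single sum over $N\in\SL_2(\Z)$ gives
$$
\partial_w^{2k-1}H_{2k}(\mathfrak{z},w) = -\frac{(2k-1)!}{2\pi i}\sum_{N\in\SL_2(\Z)} j(N,\mathfrak{z})^{-2k}(w-N\mathfrak{z})^{-2k}.
$$
Setting $w=\overline{z}$, conjugating term-by-term, and using the elementary identity $j(N,\overline{\mathfrak{z}})(z-N\overline{\mathfrak{z}}) = j(N^{-1},z)(N^{-1}z-\overline{\mathfrak{z}})$ to reparameterize $M=N^{-1}$ collapses the resulting series to $\Omega_{2k}(\mathfrak{z},z)$. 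Combined with $\frac{1}{2\pi i}=-\frac{i}{2\pi}$ this yields exactly the required $-\frac{i(2k-1)!}{2\pi}\Omega_{2k}(\mathfrak{z},z)$, completing the proof of \eqref{eqn:HkPsi}.

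With \eqref{eqn:HkPsi} established, the two structural claims are immediate. The statement $z\mapsto\mathcal{H}_{2k}(\mathfrak{z},z)\in\H_{2-2k}^{\operatorname{cusp}}$ is exactly Proposition \ref{prop:HkPsi} applied to the right-hand side. For $\mathfrak{z}\mapsto\mathcal{H}_{2k}(\mathfrak{z},z)\in\mathbb{M}_{2k}$, Petersson's series $\Psi_{2k}(\mathfrak{z},z)$ is meromorphic and modular of weight $2k$ in $\mathfrak{z}$ by construction, and the factor $y^{2k-1}$ depends only on $z$; the absence of a pole at $i\infty$ in $\mathfrak{z}$ follows from the decay of each $|_{2k,\mathfrak{z}}M$-image of $((\mathfrak{z}-z)(\mathfrak{z}-\overline{z})^{2k-1})^{-1}$ as $\operatorname{Im}(\mathfrak{z})\to\infty$.
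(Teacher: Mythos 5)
Your overall strategy --- pinning down both sides of \eqref{eqn:HkPsi} by their meromorphic parts together with their images under $\xi_{2-2k,z}$ --- is genuinely different from the paper's route (which first proves modularity and harmonicity of $z\mapsto\mathcal{H}_{2k}(\mathfrak{z},z)$ via the generating-function identity $\mathcal{H}_{2k}(\mathfrak{z},z)=-\sum_{n\geq 1}\mathcal{F}_{2-2k,-n}(z)e^{2\pi i n\mathfrak{z}}$, using the duality relations \eqref{eqn:duality} and \eqref{eqn:dualityMaass}, and then obtains \eqref{eqn:HkPsi} because the difference of the two sides is a negative-weight harmonic Maass form with no singularities and bounded at $i\infty$). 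However, your execution fails at the crucial step. Differentiating $\frac{1}{1-e^{2\pi i u}}=\frac12-\frac{1}{2\pi i}\sum_{m\in\Z}\frac{1}{u+m}$ an odd number $2k-1$ of times produces a factor $(-1)^{2k-1}=-1$ from each term $\frac{1}{u+m}$, which cancels the leading minus sign; the correct identity is
\[
\frac{\partial^{2k-1}}{\partial w^{2k-1}}\,\frac{1}{1-e^{2\pi i(w-\mathfrak{z})}}=+\frac{(2k-1)!}{2\pi i}\sum_{m\in\Z}\frac{1}{(w-\mathfrak{z}-m)^{2k}}
\]
(check $k=1$: $\frac{d}{du}\frac{1}{1-e^{2\pi iu}}=\frac{2\pi i q}{(1-q)^2}$ with $q=e^{2\pi iu}$, which equals $\frac{1}{2\pi i}\frac{\pi^2}{\sin^2(\pi u)}$, not its negative). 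Carrying the corrected sign through your own subsequent manipulations, the conjugated derivative comes out to $+\frac{i(2k-1)!}{2\pi}\Omega_{2k}(\mathfrak{z},z)$, i.e.\ the \emph{negative} of what you declared ``required'' to match $\xi_{2-2k,z}$ of the right-hand side. So the central verification that the two $\xi$-images agree does not close: either the displayed intermediate formula is wrong and the conclusion is merely asserted, or the formula is corrected and the conclusion no longer follows from it. This sign discrepancy must be tracked down and resolved before the argument can be accepted.

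Second, even granting the $\xi$-computation, your reduction is incomplete. You invoke injectivity of $\xi_{2-2k,z}$ on ``non-meromorphic parts'' and assert that the meromorphic part of $\mathcal{H}_{2k}$ is $H_{2k}$ ``by construction.'' Neither is automatic: the added sum $G_{2k}(\mathfrak{z},z):=\sum_{r=0}^{2k-2}\frac{(2iy)^r}{r!}\frac{\partial^r}{\partial\overline{z}^r}H_{2k}(\mathfrak{z},\overline{z})$ must first be shown to be a pure incomplete-gamma expansion, so that it qualifies as a non-meromorphic part and lies in the class where \eqref{eqn:xiF} gives injectivity; otherwise matching $\xi$-images only shows that the difference of the two sides is holomorphic, not that it vanishes. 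This requires expanding the geometric series in $H_{2k}(\mathfrak{z},\overline{z})$ and using $(2k-2)!\,e^{-t}\sum_{r=0}^{2k-2}t^r/r!=\Gamma(2k-1,t)$, which yields $G_{2k}=-\frac{1}{(2k-2)!}\sum_{j\ge1}\Gamma\left(2k-1,4\pi jy\right)e^{-2\pi ijz}P_{2k,j}(\mathfrak{z})$ --- a step you cannot skip. Note also that the membership $z\mapsto\mathcal{H}_{2k}(\mathfrak{z},z)\in\H_{2-2k}^{\operatorname{cusp}}$, which you deduce only \emph{after} \eqref{eqn:HkPsi}, is implicitly used when you treat $\mathcal{H}_{2k}$ as having a well-defined non-meromorphic part; the expansion of $G_{2k}$ just described removes this circularity, but it has to be written out.
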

\begin{proof} 
%[Proof of Proposition \ref{prop:modulartau}]
Petersson \cite{Pe1} showed that $\mathfrak{z}\mapsto H_{2k}(\mathfrak{z},z)\in \mathbb{M}_{2k}$, while the analogous result for $H_{2k}(\mathfrak{z},\overline{z})$ and its derivatives follows mutatis mutandis, yielding the assertion for $\mathfrak{z}$.

To show the statement in $z$, we rewrite $\mathcal{H}_{2k}$, for $\mathfrak{z}_2>\max(y,1/y)$, as the generating function 
\begin{equation}\label{eqn:Hkdef}
\mathcal{H}_{2k}\left(\mathfrak{z},z\right)=-\sum_{n=1}^{\infty}\mathcal{F}_{2-2k,-n}(z) e^{2\pi i n\mathfrak{z}}
\end{equation}
of the (weight $2-2k$) Maass--Poincar\'e series $\mathcal{F}_{2-2k,-n}$ defined in \eqref{eqn:calFdef}.  We begin proving \eqref{eqn:Hkdef} by noting that, by (3a.8) of \cite{Pe1}, we have for $\mathfrak{z}_2>\max(y,1/y)$ 
\begin{equation*}%\label{eqn:Htauexp}
H_{2k}\left(\mathfrak{z},z\right)=-\sum_{n=1}^{\infty}F_{2-2k,-n}\left(z\right) e^{2\pi i n\mathfrak{z}},
\end{equation*}
where (recalling that $a_{2k,-j}(n)$ is the $n$th coefficient of $P_{2k,-j}$) 
\begin{equation*}%\label{eqn:Fdef}
F_{2-2k,-n}(z):= 2e^{-2\pi i nz} -\sum_{j=0}^{\infty}a_{2k,-j}(n)e^{2\pi i j z}.
\end{equation*}
By \eqref{eqn:dualityMaass}, one concludes that $F_{2-2k,-n}=\mathcal{F}_{2-2k,-n}^+$.  It remains to realize the remainder of $\mathcal{H}_{2k}$ as the generating function for $-\mathcal{F}_{2-2k,-n}^-$.  Noting that $\im(M\mathfrak{z}-\overline{z})>0$ for all $\mathfrak{z},z\in\H$, and $M\in\SL_2 (\Z)$ we expand the geometric series in the definition of $H_{2k}$ to rewrite 
\begin{equation*}%\label{eqn:expandzbar}
G_{2k}(\mathfrak{z},z) :=\sum_{r=0}^{2k-2}\frac{\left(2i y\right)^{r}}{r!} \frac{\partial^r}{\partial\overline{z}^r}H_{2k}\left(\mathfrak{z},\overline{z}\right)=-\sum_{r=0}^{2k-2}\frac{\left(4\pi y\right)^{r}}{r!} \sum_{M\in \Gamma_{\infty}\backslash \SL_2(\Z)} \sum_{j= 1}^{\infty} j^{r}e^{2\pi i j\left(\mathfrak{z}-\overline{z}\right)}\bigg|_{2k,\mathfrak{z}} M.
\end{equation*}
We then use that
$$
(2k-2)!e^{-4\pi j y}\sum_{r=0}^{2k-2}\frac{\left(4\pi jy\right)^{r}}{r!}=\Gamma\left(2k-1,4\pi j y\right)
$$
to obtain, plugging in the Fourier expansion of $P_{2k, j}$, 
$$
G_{2k}(\mathfrak{z},z) = -\frac{1}{(2k-2)!}\sum_{j=1}^\infty\Gamma\left(2k-1,4\pi j y\right) e^{-2\pi i j z} \sum_{n=1}^{\infty}a_{2k,j}(n)e^{2\pi i n\mathfrak{z}}.
$$
Reordering the sums and inserting \eqref{eqn:duality} then yields
\begin{equation*}%\label{eqn:Gdef}
G_{2k}(\mathfrak{z},z)=-\sum_{n=1}^{\infty}\mathcal{F}_{2-2k,-n}^-\left(z\right) e^{2\pi i n\mathfrak{z}},
\end{equation*}
verifying \eqref{eqn:Hkdef}.

The weight $2-2k$ modularity of $\mathcal{F}_{2-2k,-n}$ then implies that $\mathcal{H}_{2k}(\mathfrak{z},z)|_{2-2k,z} M -\mathcal{H}_{2k}(\mathfrak{z},z)=0$ 
if $\mathfrak{z}_2>\max(y,1/y)$.  Since $\mathfrak{z}\mapsto\mathcal{H}_{2k}(\mathfrak{z},z)$, and hence $\mathfrak{z}\mapsto\mathcal{H}_{2k}(\mathfrak{z},z)|_{2-2k,z} M -\mathcal{H}_{2k}(\mathfrak{z},z)$, is meromorphic, we conclude by the identity theorem that $z\mapsto\mathcal{H}_{2k}(\mathfrak{z},z)$ is modular.  Furthermore, \eqref{eqn:Hkdef} implies that $z\mapsto\mathcal{H}_{2k}(\mathfrak{z},z)$ is harmonic whenever $\mathfrak{z}_2>\max(y,1/y)$.  Since $\mathfrak{z}\mapsto\Delta_{2-2k,z}\left(\mathcal{H}_{2k}(\mathfrak{z},z)\right)$ is meromorphic, the identity theorem implies that $z\mapsto\mathcal{H}_{2k}(\mathfrak{z},z)$ is harmonic.  To verify that $\mathcal{H}_{2k}\in \mathbb{H}_{2-2k}^{\text{cusp}}$, we act by $\xi_{2-2k,z}$ on \eqref{eqn:HtauHtaubar}, noting that 
$$
\overline{\frac{\partial}{\partial \overline{z}} H_{2k}(\mathfrak{z},\overline{z})}=\frac{\partial}{\partial z} H_{2k}(-\overline{\mathfrak{z}},-z).
$$

A straightforward calculation using telescoping series 
then yields
\begin{equation}\label{eqn:xiH2}
\xi_{2-2k,z}\left(\mathcal{H}_{2k}(\mathfrak{z},z)\right)=-\frac{(2i)^{2k-1}}{(2k-2)!} \frac{\partial^{2k-1}}{\partial z^{2k-1}} H_{2k}\left(-\overline{\mathfrak{z}},-z\right).
\end{equation}
The right-hand side of \eqref{eqn:xiH2} clearly decays towards $i\infty$ and is hence  a cusp form because it has no poles in $\H$.

Taking the difference of both sides of \eqref{eqn:HkPsi}, one obtains a harmonic Maass form in $z$.  Moreover, both sides are bounded towards $i\infty$ and their meromorphic parts match by \eqref{eqn:HtauHtaubar} and Proposition \ref{prop:HkPsi} so that the difference has no poles in $\H$.  Since its weight is negative, the difference is zero, which yields \eqref{eqn:HkPsi}.
\end{proof}

\section{Proof of Theorem \ref{Fouriercoefficients}}\label{sec:Fouriercoeff}

\subsection{A generalization of Theorem \ref{Fouriercoefficients}}
We first give the definitions necessary to state a more general version of Theorem \ref{Fouriercoefficients}.  
For $(c,d)=1$, we define 
\begin{equation*}%\label{eqn:Bdef}
B_{k,c,d}(\mathfrak{z},n):=\frac{1}{\left(c\mathfrak{z}+d\right)^k}e^{\frac{2\pi n \mathfrak{z}_2}{\left|c\mathfrak{z}+d\right|^2}}e\left(-\frac{n}{\left|c\mathfrak{z}+d\right|^2}\left(ac|\mathfrak{z}|^2 + bd +u\left(ad+bc\right)\right)\right).
\end{equation*}
The Fourier expansions of negative-weight meromorphic cusp forms are given by the following theorem.
\begin{theorem}\label{thm:FourierCoeff}
If $f\in \SS_{2-2k}$ is written in the form
\begin{equation}\label{eqn:meroRexp}
f(z) = \sum_{l=1}^r \sum_{n=0}^{n_l} a_{l,n} R_{2k,\mathfrak{z}}^{n}\left[H_{2k}\left(\mathfrak{z},z\right)\right]_{\mathfrak{z}=\tau_{l}},
\end{equation}
then, for $m\in\N_0$, the $m$th Fourier coefficient of $f$ is given by 
$$
\sum_{\substack{c,d\in \Z\\ (c,d)=1}}  \sum_{l=1}^r \sum_{n=0}^{n_l} a_{l,n} \sum_{j=0}^{n} \frac{(2k+n-1)!}{(2k+n-1-j)!}\binom{n}{j} (-2i)^{n-j}\left(
\frac{\left|c\tau_l+d\right|^2}{v_{l}}\right)^{j}\left(2\pi i m\right)^{n-j} B_{2k+2n,c,d}\left(\tau_{l},m\right).
$$
\end{theorem}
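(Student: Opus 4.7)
The plan is to exploit linearity of \eqref{eqn:meroRexp} and reduce to computing the Fourier expansion of each basis element $R_{2k,\mathfrak{z}}^{n}[H_{2k}(\mathfrak{z},z)]_{\mathfrak{z}=\tau_l}$. Starting from the absolutely convergent Poincar\'e representation \eqref{eqn:Hdef}, which for trivial multiplier ($L=1$, $\ell=1$) reads
$$
H_{2k}(\mathfrak{z},z)=\sum_{M\in\Gamma_\infty\backslash\SL_2(\Z)}\left.\frac{1}{1-e^{2\pi i(z-\mathfrak{z})}}\right|_{2k,\mathfrak{z}} M,
$$
I would use the standard commutation $R_\kappa(g|_\kappa M)=(R_\kappa g)|_{\kappa+2}M$ of the Maass raising operator with the slash action to iterate
$$
R_{2k,\mathfrak{z}}^n H_{2k}(\mathfrak{z},z)=\sum_{M\in\Gamma_\infty\backslash\SL_2(\Z)}\left.R_{2k,\mathfrak{z}}^n\left[\frac{1}{1-e^{2\pi i(z-\mathfrak{z})}}\right]\right|_{2k+2n,\mathfrak{z}} M.
$$
For $y>\mathfrak{z}_2$ the geometric expansion $(1-e^{2\pi i(z-\mathfrak{z})})^{-1}=\sum_{m\geq 0}e^{2\pi i mz}e^{-2\pi im\mathfrak{z}}$ is valid, reducing the problem to computing $R_{2k,\mathfrak{z}}^n[e^{-2\pi im\mathfrak{z}}]$.

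A short induction on $n$, using $R_{\kappa,\mathfrak{z}}=2i\partial_\mathfrak{z}+\kappa/\mathfrak{z}_2$ and the identity $2i\partial_\mathfrak{z}(\mathfrak{z}_2^{-j})=-j\mathfrak{z}_2^{-j-1}$, yields the closed form
$$
R_{2k,\mathfrak{z}}^n\bigl[e^{-2\pi im\mathfrak{z}}\bigr]=\sum_{j=0}^n\binom{n}{j}\frac{(2k+n-1)!}{(2k+n-1-j)!}\mathfrak{z}_2^{-j}(4\pi m)^{n-j}e^{-2\pi im\mathfrak{z}}.
$$
Substituting this back, setting $\mathfrak{z}=\tau_l$, and carrying out the weight $2k+2n$ slash action in $\mathfrak{z}$ by $M=\sm{a}{b}{c}{d}$, the factor $\mathfrak{z}_2^{-j}$ converts to $(|c\tau_l+d|^2/v_l)^j$ via $\im(M\tau_l)=v_l/|c\tau_l+d|^2$, while the remaining factor $(c\tau_l+d)^{-(2k+2n)}e^{-2\pi im M\tau_l}$ unfolds (using the explicit formulas for the real and imaginary parts of $M\tau_l$) into precisely $B_{2k+2n,c,d}(\tau_l,m)$. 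Rewriting $(4\pi m)^{n-j}=(-2i)^{n-j}(2\pi im)^{n-j}$ and identifying $\Gamma_\infty\backslash\SL_2(\Z)$ with the set of coprime pairs $(c,d)$ via the bottom row produces exactly the sum in the theorem.

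The main obstacle is technical: justifying the termwise interchange of the raising operator, the geometric expansion, and the Poincar\'e series. This is controlled by absolute convergence for $y>v_l$: the polynomial factor $m^{n-j}$ is absorbed by the geometric decay $e^{-2\pi m(y-v_l)}$, while the factor $|c\tau_l+d|^{2j}$ (with $j\leq n$) combines with $|c\tau_l+d|^{-(2k+2n)}$ from the slashing to yield a majorant bounded by $|c\tau_l+d|^{-2k}$, whose sum over coprime pairs converges since $k\geq 2$. With absolute convergence in hand, the rearrangements are legitimate and the claimed formula follows upon extracting the coefficient of $e^{2\pi imz}$ and summing against the $a_{l,n}$.
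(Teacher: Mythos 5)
Your proposal is correct, and the computation it carries out is in substance the same as the paper's, but it is organized along a genuinely different (more self-contained) route. The paper's proof is very short on the surface: it substitutes Proposition \ref{prop:Rtaugen}, which rewrites $R_{2k,\mathfrak{z}}^{n}\left(H_{2k}(\mathfrak{z},z)\right)$ as a combination of $\frac{\partial^{n-j}}{\partial z^{n-j}}H_{2k+2n,j}(\mathfrak{z},z)$, and then quotes Theorem 3.1 of \cite{BKFC} (Theorem \ref{genFC}) for the Fourier expansion of $\frac{\partial^{r}}{\partial z^{r}}H_{2k,j}$. You bypass both the auxiliary family $H_{2k,j}$ and the external citation by commuting $R_{2k,\mathfrak{z}}^n$ into the Poincar\'e series, expanding the geometric series, and evaluating $R_{2k,\mathfrak{z}}^{n}\bigl[e^{-2\pi i m\mathfrak{z}}\bigr]$ in closed form before unfolding the slash action into the $B_{2k+2n,c,d}$ factors. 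Your closed form is right (I checked $n=1,2$ and the inductive step), and its induction rests on precisely the binomial identity \eqref{eqn:binom} that drives the paper's proof of Proposition \ref{prop:Rtaugen}, so the two arguments are combinatorially identical; your convergence discussion (majorant $\left|c\tau_l+d\right|^{-2k}$ with $k\geq 2$, geometric decay in $m$ for $y>v_l$) is the correct justification for the rearrangements, with the only minor imprecision being that the geometric expansion of the $M$-th term requires $y>\im(M\tau_l)$, so the uniform condition is $y>\max_M \im(M\tau_l)$, which equals $v_l$ only when $\tau_l$ lies in the standard fundamental domain. What your route buys is self-containedness: you effectively re-derive Theorem \ref{genFC} rather than import it. What the paper's route buys is reusability: the functions $H_{2k,j}$ and Proposition \ref{prop:Rtaugen} are needed again later (for the principal-part computations in Lemma \ref{lem:Rprincpartexplicit}, the quasi-modular results in Theorem \ref{thm:E2^n}, and the explicit examples), so factoring the proof through them is not wasted effort there.
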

\begin{remark}

In Proposition \ref{prop:basis} below, we see that the meromorphic parts of elements of 
$\H_{2-2k}^{\operatorname{cusp}}$ have
 the shape \eqref{eqn:meroRexp} and hence also have Fourier expanions as given in Theorem \ref{thm:FourierCoeff}.
\end{remark}

Before proving Theorem \ref{thm:FourierCoeff} in Subsection \ref{sectionprooftheorem},  we first show that all meromorphic cusp forms indeed have a representation as in \eqref{eqn:meroRexp}.  For this, we construct polar harmonic Maass forms with all possible principal parts.  Since the Laplace operator in the $z$ variable commutes with the raising operator in the $\mathfrak{z}$ variable, we see from \eqref{eqn:Hdef} that the functions $R_{2k,\mathfrak{z}}^n\left[\mathcal{H}_{2k}(\mathfrak{z},z)\right]_{\mathfrak{z}=\tau_{l}}$ are all polar harmonic  Maass forms.  These turn out to span the space of such forms.
\begin{proposition}\label{prop:basis}
Every $f\in \H_{2-2k}^{\operatorname{cusp}}$ has a presentation of the form 
\begin{equation}\label{eqn:meroRcalexp}
f(z) = \sum_{l=1}^r \sum_{n=0}^{n_l} a_{l,n} R_{2k,\mathfrak{z}}^{n}\left[\mathcal{H}_{2k}\left(\mathfrak{z},z\right)\right]_{\mathfrak{z}=\tau_{l}}.
\end{equation}
Furthermore, for $f$ satisfying \eqref{eqn:meroRcalexp}, we have $f\in \SS_{2-2k}$ if and only if \eqref{eqn:meroRexp} holds.
\end{proposition}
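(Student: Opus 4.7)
The plan has two main pieces: first match the principal parts of $f\in\H_{2-2k}^{\operatorname{cusp}}$ at its poles in $\H$ using the basis elements $R_{2k,\mathfrak{z}}^n[\mathcal{H}_{2k}(\mathfrak{z},z)]_{\mathfrak{z}=\tau_l}$, and then show the remainder must vanish. The ``furthermore'' equivalence between \eqref{eqn:meroRcalexp} and \eqref{eqn:meroRexp} is then immediate from the splitting \eqref{eqn:HtauHtaubar}, since $R_{2k,\mathfrak{z}}^n$ acts only in the $\mathfrak{z}$-variable and hence commutes with the meromorphic/non-meromorphic decomposition in $z$.

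For the principal-part step, observe that among the cosets in \eqref{eqn:Hdef} only the identity contributes a singularity to $H_{2k}(\mathfrak{z},z)$ at $z=\mathfrak{z}$, producing the simple pole $-\frac{1}{2\pi i(z-\mathfrak{z})}$ up to a holomorphic remainder. Iterating $R_{2k,\mathfrak{z}}=2i\partial_\mathfrak{z}+2k/\mathfrak{z}_2$ and evaluating at $\mathfrak{z}=\tau_l$ yields a pole of order exactly $n+1$ at $z=\tau_l$: the holomorphic $\mathfrak{z}$-derivatives raise the pole order while the $1/\mathfrak{z}_2$ factors contribute only strictly lower-order terms. Since $\mathcal{H}_{2k}-H_{2k}$ is smooth in $z$, the same pole structure persists for $R_{2k,\mathfrak{z}}^n[\mathcal{H}_{2k}]_{\mathfrak{z}=\tau_l}$. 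Listing representatives $\tau_1,\dots,\tau_r$ of the (finitely many, since $f$ is bounded at the cusp) $\SL_2(\Z)$-orbits of poles of $f$ with orders $n_l+1$, the resulting lower-triangular system in the top coefficients is invertible, so coefficients $a_{l,n}$ can be chosen so that $g:=f-\sum_{l,n}a_{l,n}R_{2k,\mathfrak{z}}^n[\mathcal{H}_{2k}]_{\mathfrak{z}=\tau_l}$ lies in $\H_{2-2k}^{\operatorname{cusp}}$ and has no poles in $\H$.

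To conclude $g=0$, first note that the meromorphic part of $g$ is a weight $2-2k<0$ holomorphic modular form bounded at $i\infty$, hence identically zero. For the non-meromorphic part I would pair $g$ against $\varphi:=\xi_{2-2k}(g)\in S_{2k}$ via the Bruinier--Funke pairing $\{g,\varphi\}=(\xi_{2-2k}(g),\varphi)_{\mathrm{Pet}}$. Applying Stokes' theorem on a truncated fundamental domain and letting the cusp boundary recede, $\{g,\varphi\}$ equals the sum of residue contributions from the poles of $g$ in $\H$ together with a boundary contribution at $i\infty$; both vanish here, the former because $g$ has no interior poles and the latter because $g$ is bounded toward the cusp. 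Hence $\|\varphi\|^2_{\mathrm{Pet}}=0$, giving $\xi_{2-2k}(g)=0$; thus $g$ is weakly holomorphic, which together with the vanishing of its meromorphic part yields $g=0$, proving \eqref{eqn:meroRcalexp}. For the final equivalence, writing $G_{2k}:=\mathcal{H}_{2k}-H_{2k}$, the identity $R_{2k,\mathfrak{z}}^n[\mathcal{H}_{2k}]_{\mathfrak{z}=\tau_l}=R_{2k,\mathfrak{z}}^n[H_{2k}]_{\mathfrak{z}=\tau_l}+R_{2k,\mathfrak{z}}^n[G_{2k}]_{\mathfrak{z}=\tau_l}$ separates each basis element into its $z$-meromorphic and $z$-non-meromorphic pieces, so \eqref{eqn:meroRcalexp} is meromorphic if and only if the non-meromorphic summands cancel, which is precisely the condition that \eqref{eqn:meroRexp} holds.

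The main obstacle is the vanishing step: setting up the Bruinier--Funke pairing carefully in the polar setting and controlling the Stokes boundary terms requires some technical bookkeeping. The principal-part computation is routine once the action of iterated $R_{2k,\mathfrak{z}}$ on $(z-\mathfrak{z})^{-1}$ is made explicit, but one should verify nonvanishing of the top coefficients to confirm invertibility of the triangular system.
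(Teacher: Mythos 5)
Your overall architecture (cancel principal parts inductively with the $R_{2k,\mathfrak{z}}^n[\mathcal{H}_{2k}]_{\mathfrak{z}=\tau_l}$, then kill the remainder by a Liouville-type argument, then read off the ``furthermore'' from the splitting $\mathcal{H}_{2k}=H_{2k}+G_{2k}$) is the same as the paper's, and your Bruinier--Funke/Stokes argument is a perfectly good expansion of the paper's one-line appeal to the fact that there are no nontrivial negative-weight harmonic Maass forms without singularities. But there is a genuine gap in the principal-part matching step. You claim that only the identity coset of \eqref{eqn:Hdef} contributes a singularity at $z=\mathfrak{z}$ and that the resulting triangular system is invertible, deferring only a ``verification of nonvanishing of the top coefficients.'' At an elliptic fixed point $\tau_0$ (which is exactly the case of interest: $i$ and $\rho$) this is false on both counts: all $\omega_{\tau_0}$ stabilizer cosets contribute, and their sum is $\widetilde{\varepsilon}_{2k}(\tau_0)$ as in \eqref{eqn:Resztau}, which \emph{vanishes} unless $k\equiv 0\pmod{\omega_{\tau_0}}$. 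More generally the leading coefficient of the principal part of $R_{2k,\mathfrak{z}}^{n}[\mathcal{H}_{2k}]_{\mathfrak{z}=\tau_0}$ is $\widetilde{\varepsilon}_{2k+2n}(\tau_0)$ (Lemma \ref{lem:Rprincpart}), which is zero unless $k+n\equiv 0\pmod{\omega_{\tau_0}}$. So your triangular system is genuinely singular for those $n$, and no choice of $a_{l,n}$ can cancel a pole of order $n+1$ with these basis elements in that case.

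The missing idea is not that the top coefficients are nonzero, but that the offending pole orders cannot occur in the first place: one must show that an element of $\H_{2-2k}^{\operatorname{cusp}}$ can only have a pole of order $n+1$ at $\tau_0$ when $n+1\equiv 1-k\pmod{\omega_{\tau_0}}$, which is precisely the condition $\widetilde{\varepsilon}_{2k+2n}(\tau_0)\neq 0$. The paper proves this as Lemma \ref{lem:congruence} by applying Bol's identity: $D^{2k-1}(g)$ is a weight $2k$ meromorphic modular form, and the congruence condition on the exponents in its elliptic expansion \eqref{eqn:ellexp} forces the claimed congruence on the pole order of $g$. Without this complementary lemma your induction stalls at the first pole whose order violates the congruence, so you should add this step (or an equivalent argument) before the recursion can be carried out.
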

To prove Proposition \ref{prop:basis}, we determine the principal 
part of $R_{2k,\mathfrak{z}}^{n}\left[\mathcal{H}_{2k}(\mathfrak{z},z)\right]_{\mathfrak{z}=\tau_{l}}$. Since the non-meromorphic part of $\mathcal{H}_{2k}(\mathfrak{z},z)$ has no singularities, and hence does not contribute to the principal part, this is equivalent to determining the principal parts of each $R_{2k,\mathfrak{z}}^{n}\left[H_{2k}(\mathfrak{z},z)\right]_{\mathfrak{z}=\tau_{l}}$.  
We hence fix $\tau_0\in\H$ and compute the corresponding principal part at $z=\tau_0$ for $\tau_l=\tau_0$, observing that the principal parts at points inequivalent to $\tau_0$ are trivial.  
Recalling that we have renormalized $H_{2k}(\tau_0,z)$, we begin by noting that Petersson (see (3a.10) of \cite{Pe1}) has shown that its principal part is
\begin{equation*}%\label{eqn:Hkprinc}
\frac{\widetilde{\varepsilon}_{2k}\left(\tau_0\right)}{z-\tau_0}, \quad \text{where}
\end{equation*} 
\begin{equation}\label{eqn:Resztau}
\widetilde{\varepsilon}_{2k}\left(\tau_0\right):=
\begin{cases}
\frac{i}{2\pi}\omega_{\tau_0}& \text{if }k\equiv 0 \pmod{\omega_{\tau_0}},\\
0&\text{otherwise}.
\end{cases}
\end{equation}

The fact that the raising and slash operators commute yields the following lemma.
\begin{lemma}\label{lem:Rprincpart}
The principal part of $R_{2k,\mathfrak{z}}^n\left[H_{2k}(\mathfrak{z},z)\right]_{\mathfrak{z}=\tau_0}$ at $z=\tau_0$ equals
$$
\widetilde{\varepsilon}_{2k+2n}\left(\tau_0\right)R_{2k,\mathfrak{z}}^n\left[ \frac{1}{z-\mathfrak{z}}\right]_{\mathfrak{z}=\tau_0}.
$$
\end{lemma}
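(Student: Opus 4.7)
The plan is to split the Poincar\'e series $H_{2k}(\mathfrak{z},z)$ according to whether the coset representative stabilizes $\tau_0$, and to exploit the commutation of the Maass raising operator with the slash action in the $\mathfrak{z}$ variable. Let $\Gamma_{\tau_0}$ denote the stabilizer of $\tau_0$ in $\SL_2(\Z)$ and pick representatives $M_1,\dots,M_{\omega_{\tau_0}}$ for $\Gamma_\infty\backslash\Gamma_{\tau_0}$ inside $\Gamma_\infty\backslash\SL_2(\Z)$. Write $H_{2k}=H^{(S)}_{2k}+H^{(NS)}_{2k}$, where the first sum is over the stabilizer cosets and the second over the remaining cosets.

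I would first argue that $H^{(NS)}_{2k}$ contributes nothing to the principal part. Each summand has the form $(c\mathfrak{z}+d)^{-2k}(1-e^{2\pi i(z-M\mathfrak{z})})^{-1}$ with $M\tau_0\neq\tau_0$; since $\SL_2(\Z)\cdot\tau_0$ is discrete, there exists a joint neighborhood of $(\tau_0,\tau_0)$ on which $z-M\mathfrak{z}$ stays uniformly bounded away from $\Z$ for every such $M$, making each summand holomorphic in both variables there. Combined with the standard absolute convergence estimates for the Poincar\'e series away from its diagonal, this makes $H^{(NS)}_{2k}$ jointly real-analytic on that neighborhood, so $R_{2k,\mathfrak{z}}^n[H^{(NS)}_{2k}(\mathfrak{z},z)]_{\mathfrak{z}=\tau_0}$ is holomorphic in $z$ near $\tau_0$ and contributes no principal part.

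For $H^{(S)}_{2k}$, I would use that a direct calculation (employing $\im(M\mathfrak{z})=\mathfrak{z}_2/|c\mathfrak{z}+d|^2$) yields the commutativity $R_{\kappa,\mathfrak{z}}(f|_{\kappa,\mathfrak{z}}M)=(R_{\kappa,\mathfrak{z}}f)|_{\kappa+2,\mathfrak{z}}M$. Iterating and applying this to each stabilizer term $f_0|_{2k,\mathfrak{z}}M_j$ with $f_0(\mathfrak{z},z):=(1-e^{2\pi i(z-\mathfrak{z})})^{-1}$, then evaluating at $\mathfrak{z}=\tau_0$ (using $M_j\tau_0=\tau_0$) yields
\[
R_{2k,\mathfrak{z}}^n\bigl[H^{(S)}_{2k}(\mathfrak{z},z)\bigr]_{\mathfrak{z}=\tau_0}=\Bigl(\sum_{j=1}^{\omega_{\tau_0}}(c_j\tau_0+d_j)^{-(2k+2n)}\Bigr)\,R_{2k,\mathfrak{z}}^n\bigl[f_0(\mathfrak{z},z)\bigr]_{\mathfrak{z}=\tau_0}.
\]
Decomposing $f_0(\mathfrak{z},z)=\frac{i/(2\pi)}{z-\mathfrak{z}}+g(\mathfrak{z},z)$ with $g$ jointly holomorphic near $(\tau_0,\tau_0)$, the reasoning above shows the $g$-piece contributes no principal part, so the principal part of $R_{2k,\mathfrak{z}}^n[H_{2k}(\mathfrak{z},z)]_{\mathfrak{z}=\tau_0}$ equals $\tfrac{i}{2\pi}\bigl(\sum_j(c_j\tau_0+d_j)^{-(2k+2n)}\bigr)R_{2k,\mathfrak{z}}^n[(z-\mathfrak{z})^{-1}]_{\mathfrak{z}=\tau_0}$.

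To finish, I would identify $\tfrac{i}{2\pi}\sum_j(c_j\tau_0+d_j)^{-(2k+2n)}$ with $\widetilde{\varepsilon}_{2k+2n}(\tau_0)$. This is forced by applying the same decomposition in the case $n=0$ but at weight $2k+2n$, and comparing with Petersson's principal part formula recalled immediately before the statement of the lemma. The main technical subtlety I expect is justifying that the Poincar\'e series for $H^{(NS)}_{2k}$ (and the remainder $g$) admit termwise application of $R_{2k,\mathfrak{z}}^n$; this should reduce to the standard uniform convergence of $H_{2k}$ on compacta disjoint from its diagonal poles.
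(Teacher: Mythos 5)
Your argument is correct and follows exactly the route the paper intends: the paper offers no written proof beyond the remark that the raising and slash operators commute, and your proposal is a faithful elaboration of that one-line justification (isolating the stabilizer cosets, commuting $R_{2k,\mathfrak{z}}^n$ past $|_{2k,\mathfrak{z}}M_j$, and matching the constant with Petersson's formula \eqref{eqn:Resztau} via the $n=0$ case at weight $2k+2n$). The convergence caveats you flag are the standard ones and pose no obstacle.
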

To use Lemma \ref{lem:Rprincpart} for specific examples, we explicitly compute the principal part in Lemma \ref{lem:Rprincpartexplicit}.  
Although by Proposition \ref{prop:basis} all polar harmonic  Maass forms may be represented as a linear combination of the forms $R_{2k,\mathfrak{z}}^n\left[\mathcal{H}_{2k}(\mathfrak{z},z)\right]_{\mathfrak{z}=\tau_{l}}$, this representation does not have to be unique.  To find a basis of weight $2-2k$ polar harmonic Maass forms, we first determine the possible principal parts of such 
forms.
\begin{lemma}\label{lem:congruence}
For every $\tau_0\in\H$ and $n\in\N$, there exists an element of $\H_{2-2k}^{\operatorname{cusp}}$ with principal part 
\begin{equation}\label{eqn:ppharmonic}
\frac{1}{\left(z-\tau_0\right)^n}+O\left(\frac{1}{\left(z-\tau_0\right)^{n-1}}\right)
\end{equation}
around $z=\tau_0$ if and only if $n\equiv 1-k \pmod{
\omega_{\tau_0}}$.
%, where $\ell_{\tau_0}:=1$ if $\tau_0$ is not an elliptic fixed point, $\ell_{\tau_0}:=2$ if $\tau_0$ is equivalent to $i$ modulo $\SL_2(\Z)$, and $\ell_{\tau_0}:=3$ if $\tau_0$ is equivalent to $\rho$ modulo $\SL_2(\Z)$. 
\end{lemma}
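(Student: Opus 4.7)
The plan is to prove both directions using Lemma~\ref{lem:Rprincpart} together with the modular invariance under the stabilizer of $\tau_0$. For the ``if'' direction, I take $R_{2k,\mathfrak{z}}^{n-1}\left[\mathcal{H}_{2k}(\mathfrak{z},z)\right]_{\mathfrak{z}=\tau_0}$ (suitably rescaled) as the candidate element of $\H_{2-2k}^{\operatorname{cusp}}$ and compute its principal part directly from Lemma~\ref{lem:Rprincpart}. For the ``only if'' direction, I match leading pole coefficients on the two sides of the functional equation $f|_{2-2k}\gamma=f$ for $\gamma$ in the $\tau_0$-stabilizer.

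\textbf{The ``if'' direction.} Since the non-meromorphic part of $\mathcal{H}_{2k}(\mathfrak{z},z)$ is locally bounded in $z$ at $z=\tau_0$, the principal part of $R_{2k,\mathfrak{z}}^{n-1}[\mathcal{H}_{2k}(\mathfrak{z},z)]_{\mathfrak{z}=\tau_0}$ agrees with that of $R_{2k,\mathfrak{z}}^{n-1}[H_{2k}(\mathfrak{z},z)]_{\mathfrak{z}=\tau_0}$, which by Lemma~\ref{lem:Rprincpart} equals $\widetilde{\varepsilon}_{2k+2n-2}(\tau_0)\cdot R_{2k,\mathfrak{z}}^{n-1}[(z-\mathfrak{z})^{-1}]_{\mathfrak{z}=\tau_0}$. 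Writing $R_{\kappa,\mathfrak{z}}=2i\partial_{\mathfrak{z}}+\kappa/\mathfrak{z}_2$, the highest-order pole in $z$ at $\tau_0$ produced by $n-1$ iterations on $(z-\mathfrak{z})^{-1}$ comes entirely from the holomorphic piece $2i\partial_{\mathfrak{z}}$, giving leading term $(2i)^{n-1}(n-1)!/(z-\tau_0)^n$ plus poles of order strictly less than $n$. Thus the candidate has principal part of the prescribed shape precisely when $\widetilde{\varepsilon}_{2k+2n-2}(\tau_0)\neq 0$, which by \eqref{eqn:Resztau} is $k+n-1\equiv 0\pmod{\omega_{\tau_0}}$, i.e., $n\equiv 1-k\pmod{\omega_{\tau_0}}$; a scalar rescaling normalizes the leading coefficient to $1$. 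That the candidate lies in $\H_{2-2k}^{\operatorname{cusp}}$ follows because raising in $\mathfrak{z}$ commutes with the slash action in $z$ and with $\Delta_{2-2k,z}$, and introduces no new singularities away from $z=\tau_0$.

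\textbf{The ``only if'' direction.} Conversely, suppose $f\in\H_{2-2k}^{\operatorname{cusp}}$ has leading principal-part term $c_n/(z-\tau_0)^n$ with $c_n\neq 0$, and let $\gamma=\sm{a}{b}{c}{d}\in\SL_2(\Z)$ project to a generator of the $\PSL_2(\Z)$-stabilizer of $\tau_0$, of order $\omega_{\tau_0}$. Using $\gamma\tau_0=\tau_0$ and $\gamma'(\tau_0)=(c\tau_0+d)^{-2}$ I obtain the local expansions $\gamma z-\tau_0=(z-\tau_0)/(c\tau_0+d)^2+O((z-\tau_0)^2)$ and $(cz+d)^{2-2k}=(c\tau_0+d)^{2-2k}+O(z-\tau_0)$. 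Substituting these into the modularity $f(\gamma z)=(cz+d)^{2-2k}f(z)$ and comparing the coefficient of $(z-\tau_0)^{-n}$ on the two sides forces $c_n(c\tau_0+d)^{2n}=c_n(c\tau_0+d)^{2-2k}$, hence $(c\tau_0+d)^{2n+2k-2}=1$. Since $c\tau_0+d$ is a primitive $2\omega_{\tau_0}$-th root of unity (explicitly $c\tau_0+d=i$ for $\tau_0=i$ with $\omega_i=2$, and $c\tau_0+d=e^{\pi i/3}$ for $\tau_0=\rho$ with $\omega_\rho=3$; vacuous at non-elliptic points where $\omega_{\tau_0}=1$), this is equivalent to $\omega_{\tau_0}\mid n+k-1$, as required.

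\textbf{Main obstacle.} Both directions amount to short calculations once the identifications are in place. The only genuine subtlety is recognizing $c\tau_0+d$ as a primitive $2\omega_{\tau_0}$-th root of unity for a generator of the elliptic stabilizer; this is immediate in the elliptic cases and trivial otherwise. Lower-order terms in the principal part contribute only to the $O$-symbol on the ``only if'' side and to the unspecified tail on the ``if'' side, so no further bookkeeping is needed to isolate the leading-coefficient match that drives both implications.
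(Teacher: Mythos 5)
Your proof is correct, and while the ``if'' direction coincides with the paper's (both take a suitable multiple of $R_{2k,\mathfrak{z}}^{n-1}\left[\mathcal{H}_{2k}(\mathfrak{z},z)\right]_{\mathfrak{z}=\tau_0}$ and observe that the leading coefficient is a nonzero multiple of $\widetilde{\varepsilon}_{2k+2n-2}(\tau_0)$), your ``only if'' direction takes a genuinely different route. The paper applies $D^{2k-1}$, invokes Bol's identity to land in the space of weight $2k$ meromorphic modular forms, and then reads off the congruence from the condition $m\equiv -k\pmod{\omega_{\tau_0}}$ in the elliptic expansion \eqref{eqn:ellexp}, which is quoted from Petersson; the pole of order $n+2k-1$ forces $1-n-2k\equiv -k$, i.e.\ $n\equiv 1-k$. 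You instead extract the congruence directly from the transformation law $f(\gamma z)=(cz+d)^{2-2k}f(z)$ for a generator $\gamma$ of the stabilizer of $\tau_0$, comparing the coefficient of $(z-\tau_0)^{-n}$ on both sides to get $(c\tau_0+d)^{2n+2k-2}=1$ and hence $\omega_{\tau_0}\mid n+k-1$. Your version is more self-contained (it essentially reproves the congruence underlying \eqref{eqn:ellexp} in the special case needed, rather than citing it) and avoids any appeal to Bol's identity; the paper's version is shorter given that \eqref{eqn:ellexp} is already on the table and is needed elsewhere. One small point worth flagging: not every generator of the stabilizer of $\rho$ has $c\rho+d$ a \emph{primitive} $2\omega_\rho$-th root of unity (for instance the square of your chosen generator gives $e^{2\pi i/3}$), but your explicit choices do, and in any case $(c\tau_0+d)^{2(n+k-1)}=1$ yields $\omega_{\tau_0}\mid n+k-1$ for any generator since $\gcd(2,\omega_{\tau_0})$ causes no loss here. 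So the argument is sound as written.
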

\begin{proof}
First suppose that $g\in \H_{2-2k}^{\operatorname{cusp}}$ with principal part \eqref{eqn:ppharmonic} exists. Applying $D^{2k-1}$ yields
$$
D^{2k-1}\left(g(z)\right) = \frac{(n+2k-2)!}{\left(-2\pi i\right)^{2k-1}(n-1)! \left(z-\tau_0\right)^{n+2k-1}} + O\left(\frac{1}{\left(z-\tau_0\right)^{n+2k-2}}\right).
$$
However, by Bol's identity \cite{Bol}, $D^{2k-1}(g)=
(-4\pi)^{1-2k}
R_{2-2k}^{2k-1}(g)$ is a meromorphic modular form of weight $2k$.  Recall that for every weight $2k$ meromorphic modular form $f$ there exist complex numbers $b_{m}\left(\tau_0\right)$ (cf. (2a.16) of \cite{Pe1}) for which
\begin{equation}\label{eqn:ellexp}
f(z) = \left(z-\overline{\tau_0}\right)^{-2k} \sum_{\substack{m\gg -\infty\\ m\equiv -k\pmod{
\omega_{\tau_0}}}} b_{m}
\left(\tau_0\right)\left(\frac{z-\tau_0}{z-\overline{\tau_0}}\right)^m.
\end{equation}
 Hence by the congruence condition in \eqref{eqn:ellexp}, 
we have
 $n\equiv 1-k\pmod{\omega_{\tau_0}
}$.  

Conversely, if $n\equiv 1-k \pmod{\omega_{\tau_0}}$, then $\widetilde{\varepsilon}_{2k+2n-2}\left(\tau_0\right)\neq 0$, and hence a constant multiple of $R_{2k,\mathfrak{z}}^{n-1}\left[\mathcal{H}_{2k}(\mathfrak{z},z)\right]_{\mathfrak{z}=\tau_0}\in \H_{2-2k}^{\operatorname{cusp}}$ has the principal part \eqref{eqn:ppharmonic}. 
\end{proof}
We are now ready to prove Proposition \ref{prop:basis}.
\begin{proof}[Proof of Proposition \ref{prop:basis}]
For $g\in \H_{2-2k}^{\operatorname{cusp}}$, we recursively construct a polar harmonic Maass form with a lower-order pole in $
\tau_l
$.  Let $n+1$ be the order of the pole of $g$ at $\tau_{l}$. Lemma \ref{lem:congruence} implies that $n\equiv -k\pmod{\omega_{\tau_l}}$, while by Lemma \ref{lem:Rprincpart} we can subtract a constant multiple of $R_{2k,\mathfrak{z}}^{n}\left[\mathcal{H}_{2k}(\mathfrak{z},z)\right]_{\mathfrak{z}=\tau_{l}}$ to obtain a polar harmonic Maass form with a lower-order pole. Repeating this process, gives a harmonic Maass form with no singularities, yielding an identity because there are no non-trivial negative-weight harmonic Maass forms without any singularities.  Finally note that \eqref{eqn:meroRcalexp} is meromorphic if and only if the right-hand side equals its meromorphic part, which is \eqref{eqn:meroRexp}.
\end{proof}

\subsection{Proof of Theorem \ref{thm:FourierCoeff}}\label{sec:FourierCoeff}  
\label{sectionprooftheorem}

In order to obtain formulas for the Fourier coefficients of meromorphic cusp forms, we rewrite $R_{2k,\mathfrak{z}}^n\left(H_{2k}(\mathfrak{z},z)\right)$ as linear combinations of derivatives in $z$ of 
\begin{equation*}%\label{eqn:Hkndef}
H_{2k,j}(\mathfrak{z},z):=\sum_{M\in \Gamma_{\infty}\backslash \SL_2(\Z)}\frac{\mathfrak{z}_2^{-j}}{1-e^{2\pi i (z-\mathfrak{z})}}\bigg|_{2k,\mathfrak{z}} M,
\end{equation*}
whose Fourier coefficients were computed in Theorem 3.1 of \cite{BKFC} (see Theorems \ref{genFC} and \ref{genFC2} below).
\begin{proposition}\label{prop:Rtaugen}
For $n\in \N_0$, we have
\begin{equation}\label{eqn:Rtaugencoeff}
R_{2k,\mathfrak{z}}^{n}\left(H_{2k}(\mathfrak{z},z)\right)=\sum_{j=0}^{n} \frac{(2k+n-1)!}{(2k+n-1-j)!}\binom{n}{j} (-2i)^{n-j} \frac{\partial^{n-j}}{\partial z^{n-j}} H_{2k+2n,j}(\mathfrak{z},z).
\end{equation}
\end{proposition}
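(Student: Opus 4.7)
The plan is to compute $R_{2k,\mathfrak{z}}^n$ directly on the coset representative inside the definition of $H_{2k}$ and then re-sum. First, I would use the standard modular equivariance of the raising operator in the $\mathfrak{z}$ variable, namely $R_\kappa(f|_{\kappa,\mathfrak{z}} M)=(R_\kappa f)|_{\kappa+2,\mathfrak{z}}M$, which iterated gives $R_{2k,\mathfrak{z}}^n(f|_{2k,\mathfrak{z}}M)=(R_{2k,\mathfrak{z}}^n f)|_{2k+2n,\mathfrak{z}}M$. Applied termwise to the Poincar\'e series \eqref{eqn:Hdef} (using the remark after its definition that the summand extends to all of $\C$, so that differentiation passes through the sum), this reduces the problem to computing $R_{2k,\mathfrak{z}}^n$ on the single function
\[
\phi(\mathfrak{z},z):=\frac{1}{1-e^{2\pi i (z-\mathfrak{z})}}.
\]

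The core technical step is a closed-form expression for the iterated raising operator acting on any smooth function, namely
\[
R_{\kappa,\mathfrak{z}}^n=\sum_{j=0}^{n}\binom{n}{j}\frac{(\kappa+n-1)!}{(\kappa+j-1)!}\,\frac{(2i)^j}{\mathfrak{z}_2^{n-j}}\,\frac{\partial^j}{\partial \mathfrak{z}^j}.
\]
I would prove this by induction on $n$. The base case $n=1$ is the definition $R_\kappa=2i\partial_\mathfrak{z}+\kappa/\mathfrak{z}_2$. For the inductive step, one computes $R_\kappa^{n+1}=R_{\kappa+2n}\circ R_\kappa^n$ using $\partial_\mathfrak{z}(\mathfrak{z}_2^{-r})=\frac{ir}{2}\mathfrak{z}_2^{-r-1}$; the resulting coefficient recursion collapses to the claimed expression by a Pascal-type identity on $\binom{n}{j}$ together with the factorial shift $\frac{(\kappa+n)!}{(\kappa+j-1)!}=\frac{(\kappa+n)!}{(\kappa+j)!}(\kappa+j)$. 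I expect this induction to be the main obstacle in the sense that it is the only non-formal computation, though it is purely mechanical.

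With the closed form in hand, I apply it to $\phi$. Because $\phi$ depends on $z-\mathfrak{z}$ only, $\partial_\mathfrak{z}^j\phi=(-1)^j\partial_z^j\phi$, so that
\[
R_{2k,\mathfrak{z}}^n\phi(\mathfrak{z},z)=\sum_{j=0}^{n}\binom{n}{j}\frac{(2k+n-1)!}{(2k+j-1)!}\,(-2i)^{j}\,\mathfrak{z}_2^{-(n-j)}\,\frac{\partial^j}{\partial z^j}\phi(\mathfrak{z},z).
\]
Re-indexing $j\mapsto n-j$ converts the factor $(-2i)^j \mathfrak{z}_2^{-(n-j)}$ into $(-2i)^{n-j}\mathfrak{z}_2^{-j}$ and replaces $\frac{(2k+n-1)!}{(2k+j-1)!}$ by $\frac{(2k+n-1)!}{(2k+n-j-1)!}$, while leaving $\binom{n}{j}$ unchanged.

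Finally, I substitute this back into the formula from the first step. Since $\partial_z$ commutes with the $\mathfrak{z}$-variable slash action, and since $\mathfrak{z}_2^{-j}$ may be pulled inside the slash (both factors depend only on $\mathfrak{z}$), one has
\[
\sum_{M\in\Gamma_\infty\backslash\SL_2(\Z)}\left[\mathfrak{z}_2^{-j}\frac{\partial^{n-j}}{\partial z^{n-j}}\phi(\mathfrak{z},z)\right]\bigg|_{2k+2n,\mathfrak{z}}M=\frac{\partial^{n-j}}{\partial z^{n-j}}H_{2k+2n,j}(\mathfrak{z},z)
\]
by the very definition of $H_{2k+2n,j}$. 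Collecting terms yields the right-hand side of \eqref{eqn:Rtaugencoeff} and completes the proof.
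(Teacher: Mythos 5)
Your argument is correct, and it is organized differently from the paper's. The paper proves \eqref{eqn:Rtaugencoeff} by induction on $n$ at the level of the Poincar\'e series themselves: its key input is the one-step relation \eqref{eqn:RtauHkj}, $R_{2k,\mathfrak{z}}\left(H_{2k,j}\right)=-2i\frac{\partial}{\partial z}H_{2k+2,j}+(2k-j)H_{2k+2,j+1}$, and the inductive step is closed with the binomial identity \eqref{eqn:binom}. You instead isolate a statement-independent closed form for the iterated raising operator,
$$
R_{\kappa,\mathfrak{z}}^{n}=\sum_{j=0}^{n}\binom{n}{j}\frac{(\kappa+n-1)!}{(\kappa+j-1)!}\,\frac{(2i)^{j}}{\mathfrak{z}_2^{n-j}}\,\frac{\partial^{j}}{\partial\mathfrak{z}^{j}},
$$
prove it by induction (the coefficient recursion one obtains, $c_{j-1}+(\kappa+n+j)c_j$, does collapse via Pascal plus the absorption identity $j\binom{n}{j}=(n-j+1)\binom{n}{j-1}$, an identity of the same flavor as \eqref{eqn:binom}), and only then touch the modular object: equivariance of $R$ under the $\mathfrak{z}$-slash reduces everything to the seed $(1-e^{2\pi i(z-\mathfrak{z})})^{-1}$, where $\partial_{\mathfrak{z}}=-\partial_z$ trades $\mathfrak{z}$-derivatives for $z$-derivatives and the powers $\mathfrak{z}_2^{-j}$ produced by the closed form reassemble into $H_{2k+2n,j}$ after the re-indexing $j\mapsto n-j$. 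The combinatorial core is therefore the same, but your route yields a reusable operator identity and keeps the induction away from the two-variable series, while the paper's stays inside the family $H_{2k,j}$, whose one-step raising law it needs elsewhere anyway. Both arguments interchange $R_{\kappa,\mathfrak{z}}$ with the sum over $\Gamma_\infty\backslash\SL_2(\Z)$ with the same level of (justifiable) informality, so I see no gap.
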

\begin{remark}
By taking the completions on both sides, we construct a polar harmonic  Maass form as
$$
R_{2k,\mathfrak{z}}^{n}\left(\mathcal{H}_{2k}(\mathfrak{z},z)\right)= \sum_{j=0}^{n} \frac{(2k+n-1)!}{(2k+n-1-j)!}\binom{n}{j} (-2i)^{n-j} \frac{\partial^{n-j}}{\partial z^{n-j}} \mathcal{H}_{2k+2n,j}(\mathfrak{z},z).
$$
\end{remark}
\begin{proof}[Proof of Proposition \ref{prop:Rtaugen}]
We prove (\ref{eqn:Rtaugencoeff}) by induction on $n$.  The case $n=0$ is trivial.  

Before applying the induction step, we determine the action of raising in $\mathfrak{z}$ on $H_{2k,j}$ to turn derivatives with respect to $\mathfrak{z}$ into derivatives with respect to $z$.  Commuting the raising operator with the slash operator, a direct calculation yields that
\begin{equation}\label{eqn:RtauHkj}
R_{2k,\mathfrak{z}}\left(H_{2k,j}(\mathfrak{z},z)\right) = -2i\frac{\partial}{\partial z} H_{2k+2,j}(\mathfrak{z},z) + (2k-j) H_{2k+2,j+1}(\mathfrak{z},z).
\end{equation}
Assuming now by induction that \eqref{eqn:Rtaugencoeff} holds for $n$ gives 
\begin{equation}\label{eqn:geninduct}
R_{2k,\mathfrak{z}}^{n+1}\left(H_{2k}(\mathfrak{z},z)\right) =\sum_{j=0}^{n} \frac{(2k+n-1)!}{(2k+n-1-j)!}\binom{n}{j} (-2i)^{n-j} \frac{\partial^{n-j}}{\partial z^{n-j}}R_{2k+2n,\mathfrak{z}}\left( H_{2k+2n,j}(\mathfrak{z},z) \right).
\end{equation}
We then plug in \eqref{eqn:RtauHkj} to rewrite \eqref{eqn:geninduct}, after a shift in the second term, as 
$$
\sum_{j=0}^{n+1}\frac{(2k+n-1)!(-2i)^{n+1-j}}{(2k+n-j)!}\frac{\partial^{n+1-j}}{\partial z^{n+1-j}}H_{2k+2n+2,j}(\mathfrak{z},z)\!\left(\!(2k+n-j)\binom{n}{j}\!+(2k+2n+1-j)\binom{n}{j-1}\!\right)\!.
$$
Using the identity
\begin{equation}\label{eqn:binom}
(2k+n-j)\binom{n}{j}+ (2k+2n+1-j)\binom{n}{j-1} = (2k+n)\binom{n+1}{j},
\end{equation}
then yields that \eqref{eqn:Rtaugencoeff} also holds for $n+1$, finishing the proof.
%we obtain that \eqref{eqn:genalmost} equals
%\begin{multline*}
%\sum_{j=0}^{n+1}\frac{(2k-n-3)!}{(2k-n-2-j)!}(-2i)^{n+1-j}\frac{\partial^{n+1-j}}{\partial z^{n+1-j}}H_{2k,j}(\mathfrak{z},z)(2k-n-2)\binom{n+1}{j}\\
%\[
%\sum_{j=0}^{n+1}\frac{(2k+n)!}{(2k+n-j)!}\binom{n+1}{j}(-2i)^{n+1-j}\frac{\partial^{n+1-j}}{\partial z^{n+1-j}}H_{2k+2n+2,j}(\mathfrak{z},z).
%\]
\end{proof}
As alluded to above, one of the key steps in proving Theorem \ref{thm:FourierCoeff} is to use Proposition \ref{prop:Rtaugen} together with Theorem 3.1 of \cite{BKFC} to compute the Fourier coefficients of $R_{2k,\mathfrak{z}}^{n}\left[H_{2k}(\mathfrak{z},z)\right]_{\mathfrak{z}=\tau_0}$.  For the convenience of the reader, before proving Theorem \ref{thm:FourierCoeff}, we recall Theorem 3.1 (1) of \cite{BKFC} (note that we have renormalized $H_{2k,j}$ by a factor of $2\pi i$).
\begin{theorem}\label{genFC} 
If $j,r\in \N_0$, $k\geq 2+j$, and $y>\im(M\mathfrak{z})$ holds for every $M\in \SL_2(\Z)$, then 
$$
\frac{\partial^r}{\partial z^r} H_{2k,j}\left(\mathfrak{z},z\right)=\sum_{m=0}^{\infty} \sum_{\substack{c,d\in \Z\\ (c,d)=1}} \left(\frac{\left|c\mathfrak{z}+d\right|^2}{\mathfrak{z}_2}\right)^{j}\left(2\pi i m\right)^r B_{2k,c,d}(\mathfrak{z},m) e^{2\pi i mz}.
$$
\end{theorem}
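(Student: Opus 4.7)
The plan is to unfold the defining Poincar\'e series of $H_{2k,j}$, evaluate the slash action in $\mathfrak{z}$ explicitly, expand the resulting denominator as a geometric series in $e^{2\pi i(z-M\mathfrak{z})}$, and recognize the individual coset contributions as $B_{2k,c,d}$. The strict inequality $y>\im(M\mathfrak{z})$ for every $M\in\SL_2(\Z)$ is precisely what legitimizes this geometric expansion uniformly across cosets.

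To begin, the cosets $\Gamma_\infty\backslash\SL_2(\Z)$ are in bijection with coprime pairs $(c,d)\in\Z^2$, since $T^nM$ preserves the bottom row of $M$; this matches the summation in the statement. For any representative $M=\sm{a}{b}{c}{d}$ of the coset indexed by $(c,d)$, I would use $\im(M\mathfrak{z})=\mathfrak{z}_2/|c\mathfrak{z}+d|^2$ to compute
\[
\frac{\mathfrak{z}_2^{-j}}{1-e^{2\pi i(z-\mathfrak{z})}}\bigg|_{2k,\mathfrak{z}}M \;=\; (c\mathfrak{z}+d)^{-2k}\left(\frac{|c\mathfrak{z}+d|^2}{\mathfrak{z}_2}\right)^{j}\frac{1}{1-e^{2\pi i(z-M\mathfrak{z})}}.
\]
Since $|e^{2\pi i(z-M\mathfrak{z})}|=e^{-2\pi(y-\im(M\mathfrak{z}))}<1$, the last factor expands as $\sum_{m\geq 0}e^{-2\pi imM\mathfrak{z}}e^{2\pi imz}$. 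Writing $M\mathfrak{z}=(a\mathfrak{z}+b)\overline{(c\mathfrak{z}+d)}/|c\mathfrak{z}+d|^2$ and invoking $ad-bc=1$ yields $\re(M\mathfrak{z})=(ac|\mathfrak{z}|^2+bd+u(ad+bc))/|c\mathfrak{z}+d|^2$ and $\im(M\mathfrak{z})=\mathfrak{z}_2/|c\mathfrak{z}+d|^2$, which match the exponents in $B_{2k,c,d}$ exactly, so that $(c\mathfrak{z}+d)^{-2k}e^{-2\pi imM\mathfrak{z}}=B_{2k,c,d}(\mathfrak{z},m)$. One should verify en route that this quantity is independent of the choice of $(a,b)$ in the coset, since a shift $(a,b)\mapsto(a+nc,b+nd)$ alters the real part by an integer multiple of $|c\mathfrak{z}+d|^2$, which is absorbed into $e(\cdot)$. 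Assembling these pieces gives the case $r=0$, and differentiating termwise $r$ times in $z$ supplies the factor $(2\pi im)^r$.

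The main obstacle is the uniform absolute convergence required to interchange the $(c,d)$-sum, the $m$-sum, and termwise $z$-differentiation. Two ingredients suffice. First, because $\im(M\mathfrak{z})=\mathfrak{z}_2/|c\mathfrak{z}+d|^2\to 0$ as $|c\mathfrak{z}+d|\to\infty$ and only finitely many cosets can have $\im(M\mathfrak{z})$ close to $y$, the strict hypothesis $y>\im(M\mathfrak{z})$ upgrades to a uniform gap $y-\im(M\mathfrak{z})\geq\delta>0$ across all $M$, producing geometric decay in $m$ that absorbs the polynomial factor $m^r$ introduced by differentiation. Second, the Eisenstein-type sum $\sum_{(c,d)=1}|c\mathfrak{z}+d|^{2j-2k}$ converges precisely when $2k-2j>2$, i.e.\ under the standing hypothesis $k\geq 2+j$. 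The product of these bounds yields a summable dominating function, so Fubini together with dominated convergence justify all interchanges and complete the proof.
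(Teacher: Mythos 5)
Your proposal is correct and complete: the unfolding of the Poincar\'e series over $\Gamma_\infty\backslash\SL_2(\Z)\leftrightarrow\{(c,d):\gcd(c,d)=1\}$, the identification $(c\mathfrak{z}+d)^{-2k}e^{-2\pi imM\mathfrak{z}}=B_{2k,c,d}(\mathfrak{z},m)$ via the real and imaginary parts of $M\mathfrak{z}$, the check of independence from the choice of $(a,b)$, and the convergence argument (uniform gap $y-\im(M\mathfrak{z})\geq\delta$ giving geometric decay in $m$, plus the Eisenstein bound $\sum|c\mathfrak{z}+d|^{2j-2k}<\infty$ exactly under $k\geq 2+j$) are all sound. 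Note that this paper does not actually prove the statement --- it is quoted verbatim from Theorem 3.1 (1) of \cite{BKFC} --- but your argument is the standard one, matching the geometric-series manipulations the authors perform on $H_{2k}$ elsewhere (e.g.\ in the proof of Proposition \ref{prop:modulartau}), so it serves as a correct self-contained substitute for the citation.
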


\begin{proof}[Proof of Theorem \ref{thm:FourierCoeff}]
We represent $f$ as in \eqref{eqn:meroRexp} and employ Proposition 
\ref{prop:Rtaugen} to write 
\begin{equation}\label{fasH}
f(z) = \sum_{l=1}^r \sum_{n=
0
}^{n_l} a_{l,n} \sum_{j=0}^{n} \frac{(2k+n-1)!}{(2k+n-1-j)!}\binom{n}{j} (-2i)^{n-j} \frac{\partial^{n-j}}{\partial z^{n-j}} H_{2k+2n,j}\left(\tau_{l},z\right).
\end{equation}
Theorem \ref{genFC} then yields the claim.
%to write out the coefficients of $\frac{\partial^{n-j}}{\partial z^{n-j}} H_{2k+2n,j}\left(\tau_{l},z\right)$.  This yields
%\begin{multline*}
%f(z)
 %= \sum_{l=1}^r \sum_{n=1}^{n_l} a_{l,n} \sum_{j=0}^{n} \frac{(2k+n-1)!}{(2k+n-1-j)!}\binom{n}{j} (-2i)^{n-j}\\
%\times \sum_{m=0}^{\infty} \sum_{\substack{c,d\in \Z\\ (c,d)=1}} \left(\frac{\lambda(c,d)}{v_{l}}\right)^{j}\left(2\pi i m\right)^{n-j} B_{2k+2n,c,d}\left(\tau_{l},m\right) e^{2\pi i mz}.
%\end{multline*}
\end{proof}

As mentioned in the introduction, in the special case that $\tau_{l}=i$ or $\tau_{l}=\rho$, Theorem \ref{thm:FourierCoeff} has a particularly pleasing shape, which we next describe.  
We use the notation $\mathcal{O}_K$ for the ring of integers of a number field $K$ and write ideals of $\mathcal{O}_K$ as $\mathfrak{b}\subseteq \mathcal{O}_K$.  We call the ideals of $\mathfrak{b}\subseteq\mathcal{O}_K$ which are not divisible by any principal ideal $(g)$ with $g\in \Z$ \begin{it}primitive\end{it} and denote the sum over all primitive ideals of $\mathcal{O}_K$ by $\sum_{\mathfrak{b}\subseteq\mathcal{O}_K}^*$.  For $\gamma=c\mathfrak{z}+d\in \mathcal{O}_{\Q(\mathfrak{z})}$, we let $N(\gamma)$ be the norm of $\gamma$ in $\mathcal{O}_K$; we also use this notation for norms of ideals.  
For $\mathfrak{b}=(c\rho+d)\subset\mathcal{O}_{\Q(\rho)}$, we define 
\begin{equation}\label{eqn:C6}
C_{6m}\left(\mathfrak{b},n\right) := 
\cos\left(\frac{\pi n}{N(\mathfrak{b})}\left(ad+bc-2ac-2bd\right) -6m\arctan\left(\frac{c\sqrt{3}}{2d-c}\right)\right),
\end{equation}
where $a,b\in \Z$ are any choices for which $ad-bc=1$ and we set $C_{m}(\mathfrak{b},n):=0$ if $6\nmid m$.  Similarly, for $\mathfrak{b}=(ci+d)\subseteq\mathcal{O}_{\Q(i)}$, we let
\begin{equation}\label{eqn:C4}
C_{4m}\left(\mathfrak{b},n\right):=\cos\left(\frac{2\pi n}{N(\mathfrak{b})}\left(ac+bd\right)+4m\arctan\left(\frac{c}{d}\right)\right)
\end{equation}
and $C_{m}(\mathfrak{b},n):=0$ if $4\nmid m$.  Rewriting $B_{2k+2n,c,d}\left(\tau_{l},m\right)$ in terms of the $C_{2k}$ functions yields Theorem 3.1 (2) of \cite{BKFC}.
\begin{theorem}\label{genFC2}
For every $\mathfrak{z}\in \{i,\rho\}$, $j,r\in\N_0$, $k\geq 2+j$, and $y>\mathfrak{z}_2$, one has 
$$
\frac{\partial^r}{\partial z^r}H_{2k,j}\left(\mathfrak{z},z\right) =2\omega_{\mathfrak{z}}\sum_{m=0}^{\infty} \;\sideset{}{^*}\sum_{\mathfrak{b}\subseteq\mathcal{O}_{\Q(\mathfrak{z})}}\frac{C_{2k}\left(\mathfrak{b},m\right)}{N(\mathfrak{b})^{k}}\left(\frac{N(\mathfrak{b})}{\mathfrak{z}_2}\right)^j \left(2\pi i 
m\right)^{r}e^{\frac{2\pi m \mathfrak{z}_2}{N(\mathfrak{b})}} e^{2\pi i m z}.
$$
\end{theorem}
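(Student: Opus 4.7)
The plan is to deduce Theorem \ref{genFC2} directly from Theorem \ref{genFC} by rewriting the inner sum over coprime pairs $(c,d)$ in the two special cases $\mathfrak{z} \in \{i,\rho\}$ using the ideal structure of $\mathcal{O}_{\Q(\mathfrak{z})}$. The setup is that $\mathcal{O}_{\Q(i)} = \Z[i]$ and $\mathcal{O}_{\Q(\rho)} = \Z[\rho]$ are principal ideal domains whose nonzero elements are exactly the numbers $c\mathfrak{z}+d$ with $c,d\in\Z$. Coprime pairs $(c,d)$ correspond bijectively to primitive elements of $\mathcal{O}_{\Q(\mathfrak{z})}$, and each primitive ideal $\mathfrak{b}$ possesses exactly $|\mathcal{O}_{\Q(\mathfrak{z})}^{\times}| = 2\omega_{\mathfrak{z}}$ such generators (four units for $\mathfrak{z}=i$, six for $\mathfrak{z}=\rho$). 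Grouping the sum in Theorem \ref{genFC} by principal ideals immediately produces the overall factor $2\omega_{\mathfrak{z}}$ in front of $\sum_{\mathfrak{b}}^*$.

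The main work is to show that the contribution of the $2\omega_{\mathfrak{z}}$ pairs generating a fixed primitive ideal $\mathfrak{b}$, together with the $2\omega_{\mathfrak{z}}$ pairs generating its conjugate $\overline{\mathfrak{b}}$, combines to produce the real cosine $C_{2k}(\mathfrak{b},m)$. First I observe that $|c\mathfrak{z}+d|^2 = N(\mathfrak{b})$ is unit-invariant, so the weight $(|c\mathfrak{z}+d|^2/\mathfrak{z}_2)^j$ and the real exponential $e^{2\pi m \mathfrak{z}_2/|c\mathfrak{z}+d|^2}$ are constant along each orbit. Next, writing $c\mathfrak{z}+d = \sqrt{N(\mathfrak{b})}\,e^{i\theta}$, one has $(c\mathfrak{z}+d)^{-2k} = N(\mathfrak{b})^{-k} e^{-2ki\theta}$, and multiplying the generator by a unit $u$ shifts $\theta$ by $\arg(u)$; since the unit group is cyclic of order $2\omega_{\mathfrak{z}}$, summing $e^{-2ki\theta}$ over the orbit produces $2\omega_{\mathfrak{z}}$ times $e^{-2ki\theta}$ when $\omega_{\mathfrak{z}}\mid k$ and vanishes otherwise, matching the convention that $C_{2k}\equiv 0$ unless $2\omega_{\mathfrak{z}}\mid 2k$. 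A short computation confirms that the phase $e(-m(ac|\mathfrak{z}|^2+bd+u(ad+bc))/|c\mathfrak{z}+d|^2)$ is likewise invariant under the unit action, when $(a,b)$ is adjusted accordingly among all solutions of $ad-bc=1$. Finally, pairing $\mathfrak{b}$ with $\overline{\mathfrak{b}}$ and using $\overline{B_{2k,c,d}(\mathfrak{z},m)} = B_{2k,c',d'}(\mathfrak{z},m)$ for $(c',d')$ a generator of $\overline{\mathfrak{b}}$, the complex exponentials assemble into the real cosines with the displayed angles $\arctan(c/d)$ for $\mathfrak{z}=i$ and $\arctan(c\sqrt{3}/(2d-c))$ for $\mathfrak{z}=\rho$, yielding exactly $C_{2k}(\mathfrak{b},m)$.

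The main technical obstacle is the bookkeeping in the orbit-summation step, namely verifying that $C_{2k}(\mathfrak{b},m)$ defined by \eqref{eqn:C6} and \eqref{eqn:C4} is genuinely a function of the ideal $\mathfrak{b}$: one must check that replacing a generator $c\mathfrak{z}+d$ by a unit multiple, and correspondingly changing $(a,b)$ to preserve $ad-bc=1$, produces compatible shifts in the $\arctan$ term and in the phase $(ac|\mathfrak{z}|^2+bd+u(ad+bc))/N(\mathfrak{b})$ so that the cosine is unchanged whenever $\omega_{\mathfrak{z}}\mid k$. Once these orbit-counting and well-definedness checks are in place, the theorem follows by direct substitution into Theorem \ref{genFC} and this is essentially the route taken in Theorem 3.1(2) of \cite{BKFC}, to which we would refer for the explicit verifications.
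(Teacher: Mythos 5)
Your proposal is correct and follows essentially the same route as the paper: the paper states Theorem \ref{genFC2} without proof, recalling it as Theorem 3.1\,(2) of \cite{BKFC} and noting only that it follows from Theorem \ref{genFC} by rewriting $B_{2k,c,d}(\mathfrak{z},m)$ in terms of the $C_{2k}$ functions. Your sketch (grouping the coprime pairs $(c,d)$ into unit orbits of primitive ideals to produce the factor $2\omega_{\mathfrak{z}}$, using the vanishing of the unit sum unless $\omega_{\mathfrak{z}}\mid k$, and pairing $\mathfrak{b}$ with $\overline{\mathfrak{b}}$ to obtain the cosines) is precisely that rewriting, with the explicit well-definedness checks deferred to \cite{BKFC} just as the paper does.
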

Using Theorem \ref{genFC2} instead of Theorem \ref{genFC}, we are now ready to prove Theorem \ref{Fouriercoefficients}.
\begin{proof}[Proof of Theorem \ref{Fouriercoefficients}]
The proof follows the proof of Theorem \ref{thm:FourierCoeff}, except that we plug Theorem \ref{genFC2} into \eqref{fasH} instead of Theorem \ref{genFC} to compute the Fourier coefficients of $H_{2k+2n,j}$, noting that $H_{2k+2n,j}(\tau_0,z)$ and $F_{2k+2n,j,n-j}(\tau_0,z)$ are both identically zero if $\omega_{\tau_0}\nmid k+n$.  
\end{proof}

\section{Proof of Theorem \ref{QuasiTheorem}}\label{sec:Quasi}
In this section, we explain identities between linear combinations of polar Maass forms with different eigenvalues and $\widehat{E}_2^{m}f$ for weight $2-2k$ meromorphic cusp forms $f$.  Furthermore, we show how to use these identities 
to determine the Fourier coefficients of $E_{2}^{m}f$. The following generalizes Theorem \ref{QuasiTheorem}.
\begin{theorem}\label{thm:EpowFCgen}
If $f\in\SS_{2-2k}$, $m\in\N_0$, and $n\in\N_0$ satisfies $2-2k+2n<0$, then 
the $m$th Fourier coefficient of $E_2^{n}f$ is a linear combination (for some $j,r\in \N_0$, $\tau_l\in \H$, and $k_l\in 2\N$ satisfying $k_l\geq 2k$) of 
\begin{equation}\label{eqn:EpowFCgen}
\sum_{\substack{c,d\in \Z\\ (c,d)=1}} \left(\frac{\left|c\tau_l+d\right|^2}{v_l }\right)^{j}\left(2\pi i m\right)^r B_{k_l,c,d}\left(\tau_j, m\right).
\end{equation}
\end{theorem}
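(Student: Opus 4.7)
The plan is to express the almost-meromorphic modular form $\widehat{E}_2^n f$ as a linear combination of iterated raising operators applied to meromorphic cusp forms of appropriately adjusted weight, extract its meromorphic part to obtain $E_2^n f$, and then read off the Fourier coefficients using Theorem~\ref{thm:FourierCoeff}. The key algebraic identity is that for any sufficiently differentiable function $g$ of weight $\kappa\neq 0$, one has
$$
\widehat{E}_2(z)\, g(z) = -\frac{3}{\pi\kappa}\, R_\kappa g(z) - \frac{12}{\kappa}\, \vartheta_\kappa g(z), \qquad \vartheta_\kappa := D - \tfrac{\kappa}{12} E_2,
$$
which follows directly by combining $\widehat{E}_2 = E_2 - 3/(\pi y)$ with the rearrangement $\frac{1}{y} g = \frac{1}{\kappa}\bigl(R_\kappa g + 4\pi Dg\bigr)$ coming from $R_\kappa = -4\pi D + \kappa/y$. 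For a meromorphic cusp form $g$ of weight $\kappa$, $R_\kappa g$ is a polar Maass form of weight $\kappa+2$ whose meromorphic part equals $-4\pi Dg$, and the Ramanujan--Serre derivative $\vartheta_\kappa g$ is a meromorphic cusp form of weight $\kappa+2$ whose pole orders exceed those of $g$ by at most one.

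Matching the $y^{-j}$-coefficients in the canonical polynomial-in-$y^{-1}$ expansion of an almost-meromorphic form (with the aid of the explicit formula $R_{\kappa'}^m g = \sum_{j=0}^m y^{-j}\,(\text{Pochhammer-type polynomial in }D^{\ast}g)$) allows one to prove by induction that $\widehat{E}_2^n f$ has a representation
$$
\widehat{E}_2^n(z)\, f(z) = \sum_{a=0}^{n} c_a\, R_{2-2k+2a}^{n-a}\bigl(g_a(z)\bigr),
$$
with constants $c_a\in\mathbb{C}$ and meromorphic cusp forms $g_a \in \SS_{2-2k+2a}$ constructed from $f$ by iterated applications of Serre derivatives. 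The nonvanishing of the Pochhammer-type top-depth coefficients $\prod_{i=0}^{n-a-1}(2-2k+2a+i)$, required to invert the recursion, is guaranteed by the hypothesis $2-2k+2n<0$, which forces all relevant weights to be strictly negative. Taking the meromorphic part of both sides, and using that the meromorphic part of $R^m g$ is $(-4\pi)^m D^m g$ when $g$ is meromorphic, we obtain the clean formula
$$
E_2^n(z)\, f(z) = \sum_{a=0}^{n} c_a\, (-4\pi)^{n-a}\, D^{n-a} g_a(z).
$$

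Each $g_a \in \SS_{2-2k+2a}$ has a Fourier expansion of the form~\eqref{eqn:EpowFCgen} by Theorem~\ref{thm:FourierCoeff} (applied with $k$ replaced by $k-a$), and the inflation of pole orders by each $\vartheta$ in the construction of $g_a$ forces the weights $k_l$ appearing in these expansions to satisfy $k_l\geq 2k$. Applying $D^{n-a}$ multiplies the $m$-th Fourier coefficient by $(2\pi i m)^{n-a}$, which is absorbed into the exponent $r$ of $(2\pi im)$ in \eqref{eqn:EpowFCgen}. Summing over $a$ and collecting terms completes the argument. The main obstacle is establishing the representation claim: one must show that the solutions $g_a$ of the recursive matching of $y^{-j}$-coefficients are genuinely meromorphic modular forms rather than merely quasi-modular, which follows from the modularity of both $\widehat{E}_2^n f$ and each $R^{n-a}_{2-2k+2a}(g_a)$ forcing the quasi-modular error terms to cancel in the recursive solution.
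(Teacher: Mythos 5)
Your argument is correct, and after unwinding it is the same decomposition the paper uses, reached by a different mechanism. The paper writes down the explicit combination $F_n$ of Proposition \ref{prop:E2mero}, proves it is meromorphic by a lowering-operator computation using \eqref{eqn:LR}, and then inducts on $n$: by Lemma \ref{lem:raisemp} the meromorphic part of each term $R_{2-2k+2n-2l}^{l}\left(\widehat{E}_2^{n-l}f\right)$ with $l\geq 1$ is a derivative of $E_2^{n-l}f$, which the induction hypothesis handles. You instead derive the single-step identity $\widehat{E}_2 g=-\frac{3}{\pi\kappa}R_{\kappa}(g)-\frac{12}{\kappa}\vartheta_{\kappa}(g)$ and use the structure theory of almost meromorphic modular forms (downward induction on the depth in $y^{-1}$, using that the top $y^{-j}$-coefficient is itself meromorphic modular) to obtain the fully unfolded representation $\widehat{E}_2^{n}f=\sum_{a}c_a R_{2-2k+2a}^{n-a}(g_a)$ with the $g_a$ iterated Serre derivatives of $f$; taking meromorphic parts then expresses $E_2^nf$ directly as a sum of $D^{n-a}g_a$, so no induction on $n$ is needed at the Fourier-coefficient stage. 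The two decompositions are genuinely the same objects — for $n=1$ one computes $F_1=\frac{4\pi}{2k-2}\vartheta_{2-2k}(f)$ — and the hypothesis $2-2k+2n<0$ plays the same role in both (for you, nonvanishing of the Pochhammer products $\prod_i(2-2k+2a+i)$; for the paper, negativity of the weight so that the residual harmonic form vanishes). What your version buys is explicitness of the meromorphic cusp forms entering the final formula; what the paper's buys is that one never has to invoke the structure theorem, only verify a single identity under $L_{2-2k+2n}$. One caution: your claim that pole-order inflation under $\vartheta$ forces $k_l\geq 2k$ does not follow — $g_a$ has weight $2-2(k-a)$, and its basis representation may involve $H_{2(k-a)+2n'}$ with $n'<a$, so the bound one actually gets is $k_l\geq 2(k-n)$; indeed Theorem \ref{thm:E2/E6^4}, where $2k=26$ but $C_{24}$ appears, shows that $k_l\geq 2k$ cannot hold as literally stated. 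Since the paper's own proof is equally silent on this point, this is a defect of the theorem statement rather than a gap specific to your argument.
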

\begin{remark}
The method used to prove Theorem \ref{thm:EpowFCgen} is effective in the sense that the Fourier coefficients can be computed in individual cases.  In order to demonstrate how to determine Fourier coefficients, we explicitly work out an example in Section \ref{5.1}. 
\end{remark}
Before proving Theorem \ref{thm:EpowFCgen}, we recall that the Fourier coefficients of arbitrary meromorphic cusp forms are computed in Theorem \ref{thm:FourierCoeff} and then use the following proposition to rewrite $E_2^nf$ in terms 
of $f$ multiplied by lower powers of $E_2$.
\begin{proposition}\label{prop:E2mero}
If $f\in \SS_{2-2k}$ and $n\in\N_0$ satisfies $2-2k+2n<0$, then 
\begin{equation}\label{eqn:gdef}
F_n(z):=\sum_{l=0}^{n} (-1)^{l}\binom{n}{l} \frac{(2k-2n-1)!}{(2k-2n-1+l)!} R_{2-2k+2n-2l}^{l}\left(\left(\frac{\pi}{3}\right)^{n-l}\widehat{E}_{2}^{n-l}(z)f(z)\right)\in \SS_{2-2k+2n}.
\end{equation}
\end{proposition}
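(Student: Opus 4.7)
Three properties of $F_n$ must be checked: (i) modularity of weight $2-2k+2n$, (ii) meromorphicity (cancellation of the explicit $y^{-1}$-contributions hidden in the $\widehat{E}_2$-factors), and (iii) decay at $i\infty$, so that $F_n\in\SS_{2-2k+2n}$.  Property (i) is automatic: $\widehat{E}_2$ is a modular completion of weight $2$ and $f$ has weight $2-2k$, so $\widehat{E}_2^{n-l}f$ is modular of weight $2-2k+2(n-l)$; the raising operator $R_{2-2k+2n-2l}^{l}$ then boosts the weight by $2l$, and every summand---hence $F_n$ itself---transforms with weight $2-2k+2n$.  The substance of the proposition lies in (ii), and this is where I would concentrate.

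My first attempt would be induction on $n$.  The base case $n=1$ is a short explicit computation: expanding $\widehat{E}_2=E_2-\frac{3}{\pi y}$ and $R_{2-2k}f=2if'+\frac{2-2k}{y}f$, the two explicit $y^{-1}$-contributions cancel between the two summands, leaving
\begin{equation*}
F_1=\frac{\pi}{3}E_2f-\frac{i}{k-1}f'=\frac{2\pi}{k-1}\left(D(f)+\frac{k-1}{6}E_2f\right),
\end{equation*}
a nonzero scalar multiple of the classical weight-$(2-2k)$ Serre derivative $\vartheta_{2-2k}(f)$, which is known to lie in $\SS_{4-2k}$.  Rearranging yields the key identity $\widehat{E}_2 f=\frac{3}{\pi}F_1+\frac{3}{\pi(2k-2)}R_{2-2k}(f)$, which can be used to eliminate one factor of $\widehat{E}_2$ from each summand of $F_n$.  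Iterating this elimination and reorganising the resulting expression via the combinatorial identity \eqref{eqn:binom}, I would identify $F_n$ with an explicit nonzero multiple of the $n$-fold iterated Serre derivative $\vartheta_{2-2k+2n-2}\circ\cdots\circ\vartheta_{2-2k}$ applied to $f$, which lives in $\SS_{2-2k+2n}$ by standard properties of $\vartheta$, settling (ii) and (iii) simultaneously.

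As a direct fallback, I would substitute $\widehat{E}_2=E_2-\frac{3}{\pi y}$ into every summand of $F_n$, expand binomially, and apply the commutation identity $R_\kappa(y^{-s}g)=y^{-s}R_{\kappa-s}(g)$ (an easy consequence of $\partial_z y=-i/2$) iteratively to pull each $y^{-j}$ outside the raising operators.  The result is that $F_n$ becomes a polynomial in $y^{-1}$ whose coefficients involve $z$-derivatives of various $E_2^{m}f$; collecting the coefficient of $y^{-m}$ for each $m\ge1$ reduces the vanishing to a Vandermonde/Pochhammer-type identity reminiscent of \eqref{eqn:binom}.  Once meromorphicity is in hand, cuspidality is immediate, because $f$ decays exponentially at $i\infty$, $E_2$ is bounded there, and every application of $R_\kappa h=2ih'+\kappa h/y$ preserves exponential decay.

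The principal obstacle is the combinatorial cancellation in step (ii): in the direct approach, the collapsing of the double sum in $(l,j)$ rests on a nontrivial binomial identity; in the inductive approach, the difficulty is the bookkeeping needed to match the two-parameter expression defining $F_n$ against a composition of $n$ Serre derivatives.  By comparison, (i) is immediate from the slash-compatibility of $\widehat{E}_2$, $f$, and $R$, while (iii) is a direct estimate once meromorphicity is secured.
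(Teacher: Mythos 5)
The part of the proposition that carries all the content is the meromorphy of $F_n$ (the cancellation of the $y^{-1}$-powers), and this is precisely the step your proposal does not secure. Your primary route rests on identifying $F_n$ with a nonzero multiple of the iterated Serre derivative $\vartheta_{2-2k+2n-2}\circ\cdots\circ\vartheta_{2-2k}(f)$, and that identification is false for $n\geq 2$. By Lemma \ref{lem:raisemp}, the meromorphic part of $F_n$ is
\begin{equation*}
\sum_{l=0}^{n}(-1)^{l}\binom{n}{l}\frac{(2k-2n-1)!}{(2k-2n-1+l)!}\,(2i)^{l}\left(\frac{\pi}{3}\right)^{n-l}\frac{\partial^{l}}{\partial z^{l}}\left(E_2^{n-l}(z)f(z)\right),
\end{equation*}
which lies in the span of the functions $D^{l}\left(E_2^{n-l}f\right)$; for $n=2$ it equals $\frac{8\pi^2}{(2k-3)(k-2)}\bigl(D^2f+\frac{2k-3}{6}D(E_2f)+\frac{(k-1)(2k-3)}{72}E_2^2f-\frac{2k-3}{72}E_4f\bigr)$ after expanding $D(E_2f)$ via $DE_2=\frac{E_2^2-E_4}{12}$, whereas $\vartheta_{4-2k}\circ\vartheta_{2-2k}(f)$ has the same $D^2f$, $D(E_2f)$, and $E_2^2f$ components but carries $-\frac{k-1}{72}E_4f$ instead of $-\frac{2k-3}{72}E_4f$. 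So $F_2$ is \emph{not} proportional to the second Serre derivative (both do lie in $\SS_{6-2k}$, but your proof of that fact evaporates). The inductive elimination also does not close up: substituting $\widehat{E}_2f=\frac{3}{\pi}F_1+\frac{3}{\pi(2k-2)}R_{2-2k}(f)$ into $\widehat{E}_2^{\,n-l}f$ produces terms $\widehat{E}_2^{\,n-l-1}R_{2-2k}(f)$ whose second factor is not meromorphic, so they are not instances of the proposition for smaller $n$. Your fallback (expand $\widehat{E}_2=E_2-\frac{3}{\pi y}$, commute the $y^{-1}$-powers past the raising operators, and kill each coefficient of $y^{-m}$) is viable in principle, but it defers the entire content of the proposition to an unproven combinatorial identity.

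The paper sidesteps this bookkeeping entirely: it applies the lowering operator $L_{2-2k+2n}=-2iy^{2}\frac{\partial}{\partial\overline{z}}$ to $F_n$ and shows the result vanishes. The inputs are $L_2(\widehat{E}_2)=\frac{3}{\pi}$, hence $L(\mathcal{G}_{n})=n\mathcal{G}_{n-1}$ for $\mathcal{G}_n:=(\frac{\pi}{3})^n\widehat{E}_2^nf$, together with the iterated commutation relation \eqref{eqn:LR}, which gives $L_{2-2k+2n}\circ R^{l}(\mathcal{G}_{n-l})=l(2k-2n+l-1)R^{l-1}(\mathcal{G}_{n-l})+(n-l)R^{l}(\mathcal{G}_{n-l-1})$; the resulting double sum telescopes after shifting $l\mapsto l+1$ and using $\binom{n}{l}l=n\binom{n-1}{l-1}$, so $L_{2-2k+2n}(F_n)=0$ and $F_n$ is meromorphic. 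That is the cancellation your ``Vandermonde/Pochhammer'' step would need to reproduce. Your treatment of modularity and of boundedness at $i\infty$ (which for negative weight yields membership in $\SS_{2-2k+2n}$) agrees with the paper and is fine once meromorphy is established.
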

\begin{proof}[Proof of Proposition \ref{prop:E2mero}]
Note that the function $F_n$ satisfies weight $2-2k+2n$ modularity by construction.  Furthermore, inspecting its Fourier expansion implies that for $F\in \mathbb{M}_{2-2k+2n}$ with $2-2k+2n<0$, $F\in \SS_{2-2k+2n}$ if and only if $F$ is bounded towards $i\infty$.  Since $f\in \SS_{2-2k}$, the right-hand side of \eqref{eqn:gdef} is bounded towards $i\infty$ termwise, so it remains to show that $F_n$ is meromorphic. For ease of notation, we let $\mathcal{G}_n:=(\frac{\pi}{3})^n \widehat{E}_2^nf$ and apply the lowering operator $L_{2-2k+2n}:=-2iy^2\frac{\partial}{\partial\overline{z}}$ to each term in \eqref{eqn:gdef}.  Recalling that 
\begin{equation}\label{eqn:E2lower}
L_{2}\left(\widehat{E}_2\right) =\frac{3}{\pi},
\end{equation}
we obtain, since $f$ is meromorphic, 
\begin{equation}\label{eqn:lowerE2hatf}
L_{2-2k+2n}
\left(\mathcal{G}_n\right) = n \mathcal{G}_{n-1}.
\end{equation}
This yields the image of the $l=0$ term in \eqref{eqn:gdef} and we are left to apply lowering to the other terms in \eqref{eqn:gdef}. Repeatedly commuting the lowering and raising operators via  (see (3.3) of \cite{BruinierFunke}) 
\begin{equation}\label{eqn:LR}
L_{\kappa}\circ R_{\kappa-2} + (\kappa-2) = -\Delta_{\kappa-2}= R_{\kappa-4}\circ L_{\kappa-2},
\end{equation}
we obtain that
$$
L_{2-2k+2n}\circ R_{2-2k+2n-2l}^{l}\!\left(\mathcal{G}_{n-l}\right)\! = l(2k-2n+l-1)R_{2-2k+2n-2l}^{l-1}\!\left(\mathcal{G}_{n-l}\right) + R_{-2k+2n-2l}^{l}\circ L_{2-2k+2n-2l}\!\left(\mathcal{G}_{n-l}\right).
$$
Using \eqref{eqn:lowerE2hatf}, 
this
 gives
\begin{multline}\label{eqn:lowerFn}
L_{2-2k+2n}\left(F_n\right) =  n \mathcal{G}_{n-1} + \sum_{l=1}^{n}(-1)^{l}\binom{n}{l} \frac{(2k-2n-1)!l}{(2k-2n-2+l)!} R_{2-2k+2n-2l}^{l-1}
\left(
\mathcal{G}_{n-l}
\right)\\
+ \sum_{l=1}^{n-1}(-1)^{l}\binom{n}{l} \frac{(2k-2n-1)!(n-l) }{(2k-2n-1+l)!}
R_{-2k+2n-2l}^{l}\left(
\mathcal{G}_{n-1-l}
\right).
\end{multline}
Shifting $l\mapsto l+1$ in the middle sum in \eqref{eqn:lowerFn} and using the fact that $\binom{n}{l}l = n\binom{n-1}{l-1}$, \eqref{eqn:lowerFn} implies that $L_{2-2k+2n}\left(F_n\right) = 0,$ and therefore $F_n$ is meromorphic.  
\end{proof}

Before proving Theorem \ref{thm:EpowFCgen}, we require an additional lemma about the meromorphic parts of the images of almost meromorphic modular forms under (repeated) raising.
\begin{lemma}\label{lem:raisemp}
Suppose that $f(z)=\sum_{j=0}^r y^{-j} f_j(z)$ is a weight $\kappa$ almost meromorphic modular form and $l\in\N_0$.  
Then the meromorphic part of $R_{\kappa}^{l}\left(f(z)\right)$ equals $(2i)^{l}\frac{\partial^{l}}{\partial z^{l}} f_0(z).$
\end{lemma}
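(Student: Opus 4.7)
The plan is to proceed by induction on $l$, after first carrying out a direct computation of how a single application of $R_\kappa$ acts on an almost meromorphic modular form term-by-term in the expansion $f = \sum_{j=0}^r y^{-j} f_j$. The base case $l=0$ is tautological.

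For the base case $l=1$, I would use that $\frac{\partial y}{\partial z} = \frac{1}{2i}$ to compute
\begin{equation*}
2i\frac{\partial}{\partial z}\left(y^{-j} f_j(z)\right) = 2i y^{-j} f_j'(z) - jy^{-j-1}f_j(z),
\end{equation*}
so that
\begin{equation*}
R_\kappa(f) = \sum_{j=0}^{r}\left(2iy^{-j} f_j' - jy^{-j-1}f_j\right) + \kappa y^{-1}\sum_{j=0}^{r}y^{-j}f_j = 2i f_0'(z) + \sum_{j=1}^{r+1}y^{-j}\bigl(2if_j' + (\kappa-j+1)f_{j-1}\bigr),
\end{equation*}
with the convention $f_{r+1}\equiv 0$. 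Reading off the $y^0$-coefficient gives that the meromorphic part of $R_\kappa(f)$ is exactly $2i f_0'(z)$, which is the $l=1$ case of the claim. Simultaneously, this calculation shows that $R_\kappa(f)$ is again an almost meromorphic modular form (now of weight $\kappa+2$), since each of its $y$-coefficients is meromorphic.

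For the inductive step, assume the statement for $l$. Because $R_\kappa^l(f)$ is almost meromorphic modular of weight $\kappa+2l$ with meromorphic part $(2i)^l f_0^{(l)}(z)$ by the induction hypothesis, applying the base case with $\kappa$ replaced by $\kappa+2l$ to $R_\kappa^l(f)$ yields that the meromorphic part of
\begin{equation*}
R_\kappa^{l+1}(f) = R_{\kappa+2l}\bigl(R_\kappa^l(f)\bigr)
\end{equation*}
equals $2i\cdot\frac{\partial}{\partial z}\bigl((2i)^l f_0^{(l)}(z)\bigr) = (2i)^{l+1} f_0^{(l+1)}(z)$, completing the induction.

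There is essentially no obstacle; the only point requiring care is the bookkeeping showing that $R_\kappa$ preserves the almost meromorphic class (so that the induction step is legitimate), and this is handled by the explicit formula for $R_\kappa(f)$ exhibited above, whose coefficients in $y^{-j}$ are manifestly meromorphic.
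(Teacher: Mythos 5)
Your proof is correct and is essentially identical to the paper's: both compute $R_\kappa(f)=\sum_{j=0}^{r+1}y^{-j}g_j$ with $g_0=2if_0'$ and $g_j=(\kappa-j+1)f_{j-1}+2if_j'$ for $j\geq 1$, observe that each $g_j$ is meromorphic so that $g_0$ is the meromorphic part, and then conclude by induction on $l$. The only difference is that you spell out the differentiation of $y^{-j}$ and the weight bookkeeping explicitly, which the paper leaves implicit.
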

\begin{proof}
Applying raising, we see that
$$
 R_{\kappa}\left(\sum_{j=0}^r y^{-j} f_j(z)\right) = \sum_{j=0}^{r+1} y^{-j} g_j(z),
$$
where $g_0:=2if_0'$ and
$g_j:=\left(\kappa-j+1\right)f_{j-1} + 2i f_j'$ for $j\in\N$.  Note that $g_j$ is meromorphic, and 
hence $g_0$ is the meromorphic part of $R_{\kappa}(\sum_{j=0}^r y^{-j} f_j(z))$.  By induction, the claim follows.
\end{proof}

We are now ready to prove Theorem \ref{thm:EpowFCgen}.
\begin{proof}[Proof of Theorem \ref{thm:EpowFCgen}]

We show the result by induction on $n$.  The statement for $n=0$ is Theorem \ref{thm:FourierCoeff}.  By Proposition \ref{prop:E2mero}, the functions $F_n$, defined in \eqref{eqn:gdef}, are elements of $\SS_{2-2k+2n}$.  By Theorem \ref{thm:FourierCoeff}  and Proposition \ref{prop:basis}, its Fourier expansion is  a linear combination of forms of the shape \eqref{eqn:EpowFCgen}.  Since $f$ and $\widehat{E}_2$ are both almost meromorphic modular forms and  raising preserves the shape \eqref{eqn:almostmeroexp}, there exist $r_{l}\in \N_0$ and meromorphic functions $f_{l,j}$ for $j=0,\dots,r_{l}$ such 
that 
$$
(-1)^{l}\binom{n}{l}\frac{(2k-2n-1)!}{(2k-2n-1+l)!} 
R_{2-2k+2n-2l}^{l}
\left(\left(\frac{\pi}{3}\right)^{n-l}\widehat{E}_{2}^{n-l}(z)f(z)\right)=\sum_{j=0}^{r_{l}} y^{-j} f_{l,j}(z).
$$
Taking $R:=\max_{l} r_{l}$ and defining $f_{l,j}:=0$ for $j>r_{l}$, \eqref{eqn:gdef} becomes
\begin{equation}\label{eqn:Falmostexp}
F_n(z)=\sum_{l=0}^{n} \sum_{j=0}^{R} y^{-j} f_{l,j}(z)=\sum_{j=0}^{R} y^{-j} \sum_{l=0}^{n} f_{l,j}(z).
\end{equation}
We next show that the terms in \eqref{eqn:Falmostexp} with $j>0$ vanish.  
Since the left-hand side is annihilated by the lowering operator,  lowering $J\in\N$ times yields that
$$
\sum_{j=J}^{R} \frac{j!}{(j-J)!} y^{J-j} \sum_{l=0}^n f_{l,j}(z)=0.
$$
Choosing $J=R$ yields a single non-zero term in the outer sum, so we conclude that $\sum_{l=0}^n f_{l,R}$ vanishes identically.  Plugging this back into the $J=R-1$ case, the outer sum in that case now becomes a single term, and we 
iteratively conclude that $ \sum_{l=0}^{n}f_{l,j}\equiv 0$ for all $j\in\N$.  
Hence \eqref{eqn:Falmostexp} simplifies to 
$$
f_{0,0} = F_n-\sum_{l=1}^{n} f_{l,0}.
$$
Since the meromorphic part of $\widehat{E}_2^{n}f$ equals $E_2^{n}f=(\frac{3}{\pi})^nf_{0,0}$ and by Theorem \ref{thm:FourierCoeff} the Fourier expansion of $F_n$ is a linear combination of forms of the shape \eqref{eqn:EpowFCgen}, it remains to show that all $f_{l,0}$ have expansions of the form 
 \eqref{eqn:EpowFCgen}.  By Lemma \ref{lem:raisemp}, one obtains that the meromorphic part of $R_{2-2k+2n-2l}^{l}((\frac{\pi}{3})^{n-l}\widehat{E}_{2}^{n-l}(z)f(z))$ equals $(2i)^{l}(\frac{\pi}{3})^{n-l}\frac{\partial^{l}}{\partial z^{l}}(E_2^{n-l}(z)f(z))$.  Using induction,  the forms $E_2^{n-l}f$ for $1\leq l\leq n$ all have Fourier coefficients which are linear combinations of \eqref{eqn:EpowFCgen}.  Since 
differentiation preserves
 the shape \eqref{eqn:EpowFCgen}, we conclude the claim.
\end{proof}

\section{Examples}\label{sec:examples}

\subsection{Preliminary calculations for meromorphic forms}\label{5.1}
In this section, we demonstrate how to explicitly compute the Fourier coefficients of a meromorphic cusp form given only the principal part of its Laurent expansion.  The idea is to use the principal part to rewrite the form as a linear combination of the basis elements $R_{2k,\mathfrak{z}}^n\left[H_{2k}(\mathfrak{z},z)\right]_{\mathfrak{z}=\tau_l}$ (with $\tau_l\in\H$) given in Proposition \ref{prop:basis} and then employ Proposition \ref{prop:Rtaugen} and Theorems \ref{genFC} and \ref{genFC2} to determine the Fourier coefficients of these basis elements.

The main step is to determine the representation of a meromorphic cusp form in terms of the above basis, given only the principal part of its Laurent series expansion.  To compare the principal parts, we use the following explicit version of Lemma \ref{lem:Rprincpart}.
\begin{lemma}\label{lem:Rprincpartexplicit}
The principal part of $R_{2k,\mathfrak{z}}^n\left[H_{2k}(\mathfrak{z},z)\right]_{\mathfrak{z}=\tau_0}$ at $z=\tau_0\in\mathbb{H}$ is given by 
$$
\widetilde{\varepsilon}_{2k+2n}\left(\tau_0\right)n!\sum_{j=0}^{n} \frac{(2k+n-1)!}{(2k-1+j)!(n-j)!} \frac{(2i)^{j}}{v_0^{n-j}}  \frac{1}{\left(z-\tau_0\right)^{j+1}},
$$
where $\widetilde{\varepsilon}_{2k}$ is defined in \eqref{eqn:Resztau}.
\end{lemma}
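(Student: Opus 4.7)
The plan is to apply Lemma \ref{lem:Rprincpart}, which already identifies the principal part of $R_{2k,\mathfrak{z}}^n[H_{2k}(\mathfrak{z},z)]_{\mathfrak{z}=\tau_0}$ at $z=\tau_0$ as
$$
\widetilde{\varepsilon}_{2k+2n}(\tau_0)\, R_{2k,\mathfrak{z}}^n\!\left[\frac{1}{z-\mathfrak{z}}\right]_{\mathfrak{z}=\tau_0}.
$$
Since every summand on the right-hand side of the claimed formula is a strict polar term at $z=\tau_0$, it suffices to establish the explicit closed form
$$
R_{2k,\mathfrak{z}}^n\!\left[\frac{1}{z-\mathfrak{z}}\right] = n!\sum_{j=0}^{n} \frac{(2k+n-1)!}{(2k-1+j)!(n-j)!}\,\frac{(2i)^{j}}{\mathfrak{z}_2^{n-j}(z-\mathfrak{z})^{j+1}}
$$
as an identity in $\mathfrak{z}$ (with $z$ treated as an auxiliary parameter). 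Evaluating at $\mathfrak{z}=\tau_0$ and multiplying by $\widetilde{\varepsilon}_{2k+2n}(\tau_0)$ will then yield the lemma.

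I would prove the closed form by induction on $n$. The base case $n=0$ is immediate. For the inductive step, the next raising operator to apply is $R_{2k+2n,\mathfrak{z}}=2i\,\partial_{\mathfrak{z}} + (2k+2n)/\mathfrak{z}_2$. Using the fact that $\mathfrak{z}_2=(\mathfrak{z}-\overline{\mathfrak{z}})/(2i)$ implies $\partial_{\mathfrak{z}}\mathfrak{z}_2 = 1/(2i)$, a direct calculation shows that on each summand
$$
R_{2k+2n,\mathfrak{z}}\!\left[\frac{1}{\mathfrak{z}_2^{n-j}(z-\mathfrak{z})^{j+1}}\right] = \frac{2k+n+j}{\mathfrak{z}_2^{n+1-j}(z-\mathfrak{z})^{j+1}} + \frac{2i(j+1)}{\mathfrak{z}_2^{n-j}(z-\mathfrak{z})^{j+2}}.
$$
After reindexing the second term, the coefficient of $(2i)^{j}\mathfrak{z}_2^{-(n+1-j)}(z-\mathfrak{z})^{-(j+1)}$ in $R_{2k,\mathfrak{z}}^{n+1}[1/(z-\mathfrak{z})]$ receives one contribution from the old index $j$ and one from the old index $j-1$.

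The heart of the verification is the elementary identity
$$
(2k+n+j)(n+1-j) + j(2k-1+j) = (2k+n)(n+1),
$$
which, after extracting the common factorial factor, collapses the two contributions into precisely the $(n+1)$-level coefficient predicted by the closed form. No serious obstacle arises: the shape of the expansion is preserved at each step, and the only nontrivial point is the bookkeeping needed to marry the two summation indices via the Pascal-type identity above. I expect this index-merging to be the main technical hurdle, but it is a purely computational verification. Once the induction closes, specializing $\mathfrak{z}=\tau_0$ and inserting the prefactor $\widetilde{\varepsilon}_{2k+2n}(\tau_0)$ completes the proof.
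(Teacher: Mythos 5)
Your proof is correct, and it takes a genuinely different (though related) route from the paper's. You start from Lemma \ref{lem:Rprincpart} and then evaluate $R_{2k,\mathfrak{z}}^n\left[\frac{1}{z-\mathfrak{z}}\right]$ in closed form by a direct induction on the raising operator, with the index-merging handled by the identity $(2k+n+j)(n+1-j)+j(2k-1+j)=(2k+n)(n+1)$; I checked that your one-step formula for $R_{2k+2n,\mathfrak{z}}$ acting on $\mathfrak{z}_2^{-(n-j)}(z-\mathfrak{z})^{-(j+1)}$ and the resulting recursion both close correctly, and since every term produced is genuinely polar at $z=\tau_0$, the expression you obtain is indeed the full principal part. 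The paper instead routes through Proposition \ref{prop:Rtaugen}: it expands $R_{2k,\mathfrak{z}}^{n}(H_{2k})$ as a combination of $z$-derivatives of the auxiliary series $H_{2k+2n,j}$, uses the residue $\lim_{z\to\tau_0}(z-\tau_0)H_{\kappa,j}(\tau_0,z)=\widetilde{\varepsilon}_{\kappa}(\tau_0)\mathfrak{z}_2^{-j}$, differentiates the resulting simple pole $n-j$ times in $z$, and finishes with the substitution $j\mapsto n-j$. The combinatorial content is essentially the same in both arguments (your Pascal-type identity is a reparametrization of \eqref{eqn:binom}, which the paper has already absorbed into Proposition \ref{prop:Rtaugen}), so the paper's proof is shorter \emph{given} that proposition, while yours is more self-contained: it needs only the residue of $H_{2k}$ itself rather than of the whole family $H_{\kappa,j}$, and it produces the explicit closed form of the iterated raising operator on $1/(z-\mathfrak{z})$ as a reusable by-product.
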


\begin{proof}
A straightforward calculation, following the discussion preceding (3a.10) of \cite{Pe1}, shows that for all $\kappa\in 2\N$, $j\in \N_0$, and $\tau_0\in \H$,
\begin{equation*}%\label{eqn:HkjRes}
\lim_{z\to \tau_0} \left(z-\tau_0\right)H_{\kappa,j}\left(\tau_0,z\right) =\widetilde{\varepsilon}_{\kappa}\left(\tau_0\right) \mathfrak{z}_2^{-j}.
\end{equation*}
Using Proposition \ref{prop:Rtaugen} and taking $n-j$ derivatives of the principal part of $H_{2k+2n,j}$ yields
$$
R_{2k,\mathfrak{z}}^{n}\left[H_{2k}(\mathfrak{z},z)\right]_{\mathfrak{z}=\tau_0}=  \widetilde{\varepsilon}_{2k+2n} \left(\tau_0\right) n! \sum_{j=0}^{n} \frac{(2k+n-1)!}{(2k+n-1-j)!j!} \frac{(2i)^{n-j}}{v_0^{j}} \frac{1}{\left(z-\tau_0\right)^{n-j+1}}  +O(1).
$$
The change of variables $j\to n-j$ then gives the statement of the lemma.
\end{proof}

We next assume that the only poles of $f\in \SS_{2-2k}$ occur at $\tau_0=i$ modulo $\SL_2(\Z)$ and that its principal part around $i$ is given by 
\begin{equation}\label{eqn:P0def}
P_i(z):=\frac{\alpha}{(z-i)^4} + \frac{\beta}{(z-i)^3} + \frac{\gamma}{(z-i)^2} + \frac{\delta}{z-i}
\end{equation}
with $\alpha\neq 0$.  Note that by \eqref{eqn:ellexp}, $\alpha\neq 0$ implies that $k\equiv 1\pmod{2}$, so that $\widetilde{\varepsilon}_{2k+2}(i)=\widetilde{\varepsilon}_{2k+6}(i)=i/\pi$ and $\widetilde{\varepsilon}_{2k}(i)=\widetilde{\varepsilon}_{2k+4}(i)=0$.  By Lemma \ref{lem:Rprincpartexplicit}, we have
\begin{equation*}%\label{eqn:Laurent4th}
f(z)-\frac{\alpha \pi }{48}R_{2k,\mathfrak{z}}^3\left[H_{2k}(\mathfrak{z},z)\right]_{\mathfrak{z}=i}= \frac{\beta+\frac{2k+2}{2}i\alpha}{(z-i)^3} +\frac{\gamma + 
\frac{(2k+1)(k+1)}{4}
\alpha}{(z-i)^2} + \frac{\delta - 
\frac{k(2k+1)(k+1)}{12}
i\alpha}{z-i}+O(1).
\end{equation*}
Note that the congruence condition in the elliptic expansion \eqref{eqn:ellexp} implies that $\beta=-
(k+1)
i\alpha$ .  
By \eqref{eqn:ellexp}, we thus conclude that
\begin{equation}\label{eqn:fexpandLaur}
f(z)= \frac{\alpha \pi }{48} R_{2k,\mathfrak{z}}^3\left[H_{2k}(\mathfrak{z},z)\right]_{\mathfrak{z}=i} 
-\frac{ \pi}{2}
 \left(\gamma + 
\frac{(2k+1)(k+1)}{4}
\alpha\right)R_{2k,\mathfrak{z}}\left[H_{2k}(\mathfrak{z},z)\right]_{\mathfrak{z}=i}.
\end{equation}
Theorem \ref{Fouriercoefficients} then yields the Fourier coefficients of $f$.  In the next section, we explicitly compute the principal part of $1/E_6^4$ to obtain its Fourier expansion.  

\subsection{The example $1/E_6^4$}
We now work out the details for the specific example $1/E_6^4$.  
\begin{theorem}\label{thm:1/E6^4}
For $m\in\N_0$,
 the $m$th Fourier coefficient of $1/E_6^4$ is given by 
\begin{multline*}
\sideset{}{^*}\sum_{\mathfrak{b}\subseteq\mathcal{O}_{\Q(i)}}\Bigg(\frac{
1}{2\pi^3E_4^8(i)} \frac{C_{32}\left(\mathfrak{b},m\right)}{N(\mathfrak{b})^{16}}\sum_{j=0}^{3} \binom{28}{j}\frac{
\left(4\pi  m\right)^{3-j}}{(3-j)!}N(\mathfrak{b})^j 
\\
-\frac{28}{27\pi E_4^7(i)}\frac{C_{28}\left(\mathfrak{b},m\right)}{N(\mathfrak{b})^{14}} \sum_{j=0}^{1}\binom{26}{j}\left(
4\pi m\right)^{1-j}N(\mathfrak{b})^j 
\Bigg)e^{\frac{2\pi m}{N(\mathfrak{b})}}.
\end{multline*}
\end{theorem}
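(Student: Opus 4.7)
Since $E_6$ has weight $6$ and vanishes only in the $\SL_2(\Z)$-orbit of $z = i$ with a simple zero, the form $1/E_6^4$ is a weight $-24$ meromorphic cusp form, so $k=13$ in the notation $\SS_{2-2k}$, with a unique pole of order $4$ at $\tau_0 = i$ modulo $\SL_2(\Z)$. This places us exactly in the setup of Section \ref{5.1}: by \eqref{eqn:fexpandLaur} with $k=13$ (and $(2k+1)(k+1)/4 = 189/2$),
$$
\frac{1}{E_6^4(z)} = \frac{\alpha\pi}{48}R_{26,\mathfrak{z}}^{3}[H_{26}(\mathfrak{z},z)]_{\mathfrak{z}=i} - \frac{\pi}{2}\Big(\gamma + \frac{189}{2}\alpha\Big) R_{26,\mathfrak{z}}[H_{26}(\mathfrak{z},z)]_{\mathfrak{z}=i},
$$
where $\alpha$ and $\gamma$ are the coefficients of $(z-i)^{-4}$ and $(z-i)^{-2}$ in the Laurent expansion of $1/E_6^4$ at $z = i$. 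The coefficient $\delta$ of $(z-i)^{-1}$ does not appear in \eqref{eqn:fexpandLaur}, and the coefficient $\beta$ of $(z-i)^{-3}$ is forced by the elliptic-expansion congruence \eqref{eqn:ellexp} to satisfy $\beta = -(k+1)i\alpha$.

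The computation therefore reduces to determining $\alpha$ and $\gamma$. I would proceed by iteratively applying Ramanujan's identities
$$
DE_2 = \tfrac{1}{12}(E_2^2 - E_4),\qquad DE_4 = \tfrac{1}{3}(E_2E_4 - E_6),\qquad DE_6 = \tfrac{1}{2}(E_2E_6 - E_4^2),
$$
starting from the classical values $E_2(i) = 3/\pi$ and $E_6(i)=0$. A first application gives $E_6'(i) = -\pi i E_4^2(i)$ and hence $\alpha = 1/(E_6'(i))^4 = 1/(\pi^4 E_4^8(i))$. Expanding $1/E_6^4 = a_1^{-4}(z-i)^{-4}\bigl(1 - 4(a_2/a_1)(z-i) + (10(a_2/a_1)^2 - 4 a_3/a_1)(z-i)^2 + \cdots\bigr)$ with $a_j := E_6^{(j)}(i)/j!$, a second differentiation yields $a_2/a_1 = 7i/2$, hence $\beta = -14i\alpha$ (confirming $\beta = -(k+1)i\alpha$ with $k=13$, a useful cross-check on the arithmetic). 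A third application of $D$, using the previously computed $DE_2(i)$, $DE_4(i)$, and $D^2 E_4(i)$, produces $D^3 E_6(i)$, and combining all pieces yields the identity
$$
\gamma + \frac{189}{2}\alpha = \frac{14}{27\pi^2 E_4^7(i)}.
$$

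Substituting these values into the displayed expression for $1/E_6^4$ and invoking Theorem \ref{Fouriercoefficients} together with Theorem \ref{genFC2} (with $\omega_i = 2$ and $v_0 = 1$) gives the Fourier expansion as a linear combination of $F_{32,j,3-j}(i,z)$ for $0 \le j \le 3$ and $F_{28,j,1-j}(i,z)$ for $0 \le j \le 1$. The combinatorial factor $\tfrac{(2k+n-1)!}{(2k+n-1-j)!}\binom{n}{j}$ in Theorem \ref{Fouriercoefficients} equals $n!\binom{2k+n-1}{j}/(n-j)!$; absorbing the $n!$-prefactor into the scalar and using $1!/(1-j)! = 1$ for $j \in \{0,1\}$ rewrites the inner sums in the form $\binom{28}{j}(4\pi m)^{3-j} N(\mathfrak{b})^j / (3-j)!$ and $\binom{26}{j}(4\pi m)^{1-j} N(\mathfrak{b})^j$ as stated, and gathering the scalar prefactors produces the constants $\tfrac{1}{2\pi^3 E_4^8(i)}$ and $-\tfrac{28}{27\pi E_4^7(i)}$.

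The main obstacle is the evaluation of $D^3 E_6(i)$ and the subsequent assembly of $\gamma$: the clean form of $\gamma + (189/2)\alpha$, depending only on $E_4^7(i)$, arises from a cancellation of $E_4^8(i)$-contributions between $\gamma$ and $(189/2)\alpha$ that only becomes visible after all pieces are combined. The elliptic-expansion constraint $\beta = -(k+1)i\alpha$ offers a useful safeguard at the $a_2$-stage, but $\gamma$ itself admits no analogous built-in cross-check and must be computed carefully.
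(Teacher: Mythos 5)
Your proposal is correct and takes essentially the same route as the paper: write $1/E_6^4$ in the basis via \eqref{eqn:fexpandLaur} with $k=13$, compute $\alpha=1/(\pi^4E_4^8(i))$ and $\gamma$ from the Laurent expansion at $i$ using Ramanujan's derivative identities, and then apply Theorem \ref{Fouriercoefficients} (equivalently Theorem \ref{genFC2}) with $\omega_i=2$. The only cosmetic difference is that you extract $\beta$ and $\gamma$ by formally inverting the power series of $E_6^4$ (giving $\gamma=10a_2^2a_1^{-6}-4a_3a_1^{-5}$) rather than by the paper's sixfold application of L'Hospital's rule, and all your values --- $\beta=-14i\alpha$ and $\gamma+\tfrac{189}{2}\alpha=\tfrac{14}{27\pi^2E_4^7(i)}$ --- agree with the paper's \eqref{eqn:alphabetaE64} and \eqref{eqn:gammaeval}.
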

\begin{remark}
The special case of $m=0$ yields the interesting identity
$$
\sideset{}{^*}\sum_{\mathfrak{b}=(ci+d)\subseteq\mathcal{O}_{\Q(i)}}\frac{1}{N(\mathfrak{b})^{13}}\left(9\cos\left(32\arctan\left(\frac{c}{d}\right)\right)-4\pi^2 E_4(i) \cos\left(28\arctan\left(\frac{c}{d}\right)\right) \right)=
\frac{27\pi^3 E_4^8(i)}{182}.
$$
\end{remark}

In order to prove Theorem \ref{thm:1/E6^4}, we first express $1/E_6^4$ in terms of the basis from Proposition \ref{prop:basis}.
\begin{lemma}\label{lem:1/E6^4rewrite}
We have 
$$
\frac{1}{E_6^4(z)} =\frac{1}{48\pi^3E_4^8(i)}R_{26,\mathfrak{z}}^3\left[H_{26}(\mathfrak{z},z)\right]_{\mathfrak{z}=i}-\frac{7}{27 \pi E_4^7(i)} R_{26,\mathfrak{z}}\left[H_{26}(\mathfrak{z},z)\right]_{\mathfrak{z}=i}.
$$
\end{lemma}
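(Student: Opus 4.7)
The plan is to represent $1/E_6^4$ in the basis of Proposition \ref{prop:basis} by matching principal parts, then invoke formula \eqref{eqn:fexpandLaur}. Since $1/E_6^4 \in \SS_{-24}$ (so $k=13$) and $E_6$ has a simple zero at $z=i$ while being nonvanishing on the standard fundamental domain otherwise, the only pole (modulo $\SL_2(\Z)$) occurs at $z=i$ and is of order $4$. By Lemma \ref{lem:congruence}, an element of $\SS_{-24}$ with a pole of order $n+1$ at $i$ forces $n \equiv -k \equiv 1 \pmod{\omega_i}$, i.e., $n$ odd; combined with the bound on the pole order, only $n=1$ and $n=3$ appear in the expansion of Proposition \ref{prop:basis}. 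This is precisely the setup producing \eqref{eqn:fexpandLaur}, so the problem reduces to computing $\alpha$ and $\gamma$ in the Laurent expansion \eqref{eqn:P0def} of $1/E_6^4$ at $z=i$ (the modularity constraint \eqref{eqn:ellexp} then determines $\beta$ and $\delta$, as is built into \eqref{eqn:fexpandLaur}).

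To extract $\alpha$ and $\gamma$, I would Taylor expand $E_6$ around $z=i$, writing $E_6(z) = \sum_{n\geq 1} B_n (z-i)^n$ with
$$
B_n = \frac{(2\pi i)^n}{n!}\, D^n E_6(i),
$$
where $D = q\,d/dq = \frac{1}{2\pi i}\,d/dz$. The derivatives $D^n E_6(i)$ for $n=1,2,3$ follow from iterated application of Ramanujan's identities
$$
D E_2 = \tfrac{E_2^2-E_4}{12},\qquad DE_4 = \tfrac{E_2 E_4-E_6}{3},\qquad DE_6 = \tfrac{E_2 E_6-E_4^2}{2},
$$
together with $E_6(i)=0$ and the classical evaluation $E_2(i) = \tfrac{3}{\pi}$. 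Inverting the Taylor series via
$$
\alpha = \frac{1}{B_1^4},\qquad \gamma = \frac{-4 B_3 B_1 + 10 B_2^2}{B_1^6}
$$
yields $\alpha = \frac{1}{\pi^4 E_4(i)^8}$ and a closed-form expression for $\gamma$. Substituting $k=13$ into \eqref{eqn:fexpandLaur} and simplifying then reproduces the two coefficients $\frac{1}{48\pi^3 E_4^8(i)}$ and $-\frac{7}{27\pi E_4^7(i)}$ of the lemma.

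The main mild obstacle is the algebra for $D^3 E_6(i)$: it requires $D^2 E_4(i)$ and $D^2 E_6(i)$ as intermediate ingredients, producing an expression of the form $-\tfrac{7 E_4(i)^2(6E_2(i)^2 + E_4(i))}{72}$ after full use of Ramanujan's identities. The resulting $\gamma$ decomposes as a sum of terms of shape $\pi^{-4}E_4(i)^{-8}$ and $\pi^{-2}E_4(i)^{-7}$; the cancellation between the $\pi^{-4}E_4(i)^{-8}$ contribution to $\gamma$ and $\frac{(2k+1)(k+1)}{4}\alpha = \frac{189}{2}\alpha$ inside \eqref{eqn:fexpandLaur} is exact, leaving only the $\pi^{-2}E_4(i)^{-7}$ piece that produces the clean coefficient $-\tfrac{7}{27\pi E_4^7(i)}$. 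The automatic consistency check $\beta = -(k+1)i\alpha$, built into \eqref{eqn:fexpandLaur}, provides an independent verification of the computation of $B_1$ and $B_2$.
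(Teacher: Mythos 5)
Your proposal is correct and follows essentially the same route as the paper: represent $1/E_6^4$ via the principal part at $z=i$ using \eqref{eqn:fexpandLaur}, compute $\alpha$ and $\gamma$ from the derivatives $E_6'(i),E_6''(i),E_6'''(i)$ obtained from Ramanujan's identities with $E_6(i)=0$ and $E_2(i)=3/\pi$, and observe the exact cancellation of the $\pi^{-4}E_4^{-8}(i)$ part of $\gamma$ against $\tfrac{189}{2}\alpha$. The only (cosmetic) difference is that you extract $\gamma$ by formally inverting the Taylor series of $E_6^4$, giving the clean identity $\gamma=(-4B_3B_1+10B_2^2)/B_1^6$, whereas the paper reaches the same expression by a sixfold application of L'Hospital's rule; your intermediate values ($D^3E_6(i)=-\tfrac{7}{72}E_4^2(i)(6E_2^2(i)+E_4(i))$, equivalently $E_6'''(i)=\tfrac{7\pi^3i}{9}E_4^3(i)+42\pi iE_4^2(i)$) agree with \eqref{eqn:E6''i}.
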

\begin{proof}
We write the principal part of the Laurent series
 of $1/E_6^4$ around $z=i$ as in \eqref{eqn:P0def}.
%$$
%\frac{1}{E_6^4(z)} =\frac{\alpha}{(z-i)^4}+\frac{\beta}{(z-i)^3}+\frac{\gamma}{(z-i)^2}+\frac{\delta}{z-i}+O(1).
%$$
By \eqref{eqn:fexpandLaur}, it remains to explicitly compute $\alpha$ and $\gamma$. For this, we first recall that by \eqref{eqn:ellexp}, 
$1/E_6^4$ has an elliptic expansion of the form
$$
\frac{1}{E_6^4(z)} =(z+i)^{24}\sum_{\substack{m\geq -4\\m\equiv 0\pmod{2} }}b_m\left(\frac{z-i}{z+i}\right)^m.
$$

We next write $\alpha, \beta, \gamma$, and $\delta$ in \eqref{eqn:P0def} in terms of the coefficients $
b_{-4}
$ and $
b_{-2}
$.
Expanding $(z+i)^{m}=(z-i+2i)^{m}$ with the Binomial Theorem, we obtain
\begin{align}
\label{eqn:alphabetaE64} \alpha &= 2^{28}b_{-4},& \beta & = -14i\alpha,\\
\label{eqn:deltaE64} \gamma&= -2^{26} b_{-2}-378\cdot 2^{26}b_{-4},&\delta&= -13i\gamma -819i\alpha.
\end{align}
Noting that $E_6(i)=0$, we have 
(see (30) of \cite{RamanujanArithmetic})
\begin{equation}\label{eqn:E6'i} 
E_6'(i)=\pi i \left(E_2(i)E_6(i) - E_4^2(i)\right)=-\pi i E_4^2(i)
\end{equation}
and hence 
\begin{equation}\label{eqn:alphaeval}
\alpha=\lim_{z\to i} \frac{(z-i)^4}{E_6^4(z)}= \frac{1}{E_6'(i)^4} = \frac{1}{\pi^4E_4^8(i)}.
\end{equation}

The calculation for $\gamma$ is slightly more involved.  We 
apply L'Hospital's rule $6$ times to obtain
\begin{equation}\label{eqn:gamma1}
\gamma=\lim_{z\to i}  (z-i)^2\left(\frac{1}{E_6^4(z)}- \frac{\alpha}{(z-i)^4} -\frac{\beta}{(z-i)^3}\right)
=\frac{-\alpha \frac{\partial^6}{\partial z^6}\left[ E_6^4(z)\right]_{z=i} - 6\beta\frac{\partial^5}{\partial z^5} \left[E_6^4(z)\right]_{z=i}}{720 E_6'(i)^4}.
\end{equation}
By \eqref{eqn:alphabetaE64}, we have
$$
\beta=-14i\alpha = -\frac{14i}{\pi^4E_4^8(i)},
$$
while
a straightforward 
but lengthy calculation yields that
\begin{align*}%\label{eqn:5deriv}
\frac{\partial^5}{\partial z^5}\left[E_6^4(z)\right]_{z=i}&=240E_6'(i)^3E_6''(i),\\
%\label{eqn:6deriv}
\frac{\partial^6}{\partial z^6}\left[E_6^4(z)\right]_{z=i}&= 480 E_6'(i)^3E_6'''(i)+1080E_6'(i)^2E_6''(i)^2.
\end{align*}
Hence we conclude from (\ref{eqn:gamma1}) 
that
\begin{equation}\label{eqn:gamma}
\gamma =-\frac{\alpha}{6E_6'(i)^2}\left(4 E_6'(i) E_6'''(i) +9E_6''(i)^2 -168iE_6'(i)E_6''(i)\right).
\end{equation}

This reduces the problem to computing the values of iterated derivatives of $E_6$ at $i$.
Similar to \eqref{eqn:E6'i}, one uses the formulas in (30) of \cite{RamanujanArithmetic} for $E_2'$, $E_4'$, and $E_6'$ to obtain 
(the other derivatives are needed as intermediaries)
\begin{align}
%\label{eqn:E6''} E_6''(z)&=\pi i \left(E_2'(z)E_6(z) + E_2(z)E_6'(z)-2E_4(z)E_4'(z)\right),\\
%E_6'''(z)&=\pi i \Big(E_2''(z)E_6(z)+ 2E_2'(z)E_6'(z)+ E_2(z)E_6''(z)-2E_4'(z)^2-2E_4(z)E_4''(z)\Big),\\
%\label{eqn:E2'} E_2'(z)&=\frac{\pi i}{6} \left(E_2^2(z) -E_4(z)\right),\\
\label{eqn:E2i}
E_2(i)&=\frac{3}{\pi},& E_2'(i)&=\frac{3 i}{2\pi} - \frac{\pi i}{6}E_4(i),& E_2''(i)&= -\frac{3}{2\pi} +\frac{\pi}{2}E_4(i),\\
%\label{eqn:E2''}E_2''(z)&=\frac{\pi i}{6}\left(2E_2(z)E_2'(z) - E_4'(z)\right)\\
%\nonumber &= \frac{\pi i}{6}\left(\frac{\pi i}{3}E_2^3(z) - \frac{\pi i}{3}E_2(z)E_4(z)-\frac{2\pi i }{3}\left(E_2(z)E_4(z)-E_6(z)\right)\right)\\
%\nonumber&=-\frac{\pi^2}{18}E_2^3(z) +\frac{\pi^2}{6} E_2(z) E_4(z) -\frac{\pi^2}{9}E_6(z),\\
%\label{eqn:E2''i} E_2''(i)&= -\frac{3}{2\pi} +\frac{\pi}{2}E_4(i),\\
%\label{eqn:E4'} E_4'(z)&=\frac{2\pi i }{3} \left(E_2(z) E_4(z)-E_6(z)\right),\\
\nonumber E_4'(i)&
= 2
iE_4(i),& E_4''(i)&=-5E_4(i)-\frac{5\pi^2}{9}E_4^2(i),\\
%\label{eqn:E4''}E_4''(z)&=\frac{2\pi i }{3} \left(E_2'(z) E_4(z)+E_2(z) E_4'(z)-E_6'(z)\right),\\
\label{eqn:E6''i}E_6''(i)&=7\pi E_4^2(i),
&E_6'''(i)&
=\frac{7\pi^3i}{9}
E_4^3(i)+42\pi i E_4^2(i).
\end{align}
Plugging \eqref{eqn:E6'i} 
and 
\eqref{eqn:E6''i} into \eqref{eqn:gamma} then yields
\begin{equation}\label{eqn:gammaeval}
\gamma =\frac{14\pi^2}{27}\alpha E_4(i) -\frac{189}{2}\alpha.
\end{equation}
%We next insert in \eqref{eqn:fexpandLaur} to explicitly rewrite $\frac{1}{E_6^4(z)}$ as a linear combination of $R_{2k,\mathfrak{z}}^{3}\left[H_{2k}(\mathfrak{z},z)\right]_{\mathfrak{z}=i}$ and $R_{2k,\mathfrak{z}}\left[H_{2k}(\mathfrak{z},z)\right]_{\mathfrak{z}=i}$.  Note that in the present case we 
%have $k=13$, therefore \eqref{eqn:gammaeval} implies that 
%$$
%\gamma +\frac{(2k+1)(k+1)}{4}\alpha = \frac{14\pi^2}{27}\alpha E_4(i).
%$$
%Thus
The claim then follows by substituting \eqref{eqn:alphaeval} and \eqref{eqn:gammaeval} into \eqref{eqn:fexpandLaur}.
%$$
%\frac{1}{E_6^4(z)}=\frac{1}{48\pi^3E_4^8(i)}R_{26,\mathfrak{z}}^3\left[H_{26}(\mathfrak{z},z)\right]_{\mathfrak{z}=i}-\frac{7}{27 \pi E_4^7(i)} R_{26,\mathfrak{z}}\left[H_{26}(\mathfrak{z},z)\right]_{\mathfrak{z}=i},
%$$
%as claimed. 
\end{proof}
\begin{proof}[Proof of Theorem \ref{thm:1/E6^4}]
The theorem follows directly from Lemma \ref{lem:1/E6^4rewrite} and Theorem \ref{Fouriercoefficients}.
\end{proof}

\subsection{The example $E_2/E_6^4$}

To demonstrate how to use Theorem \ref{QuasiTheorem}, we explicitly compute the coefficients of $E_2/E_6^4$.

\begin{theorem}\label{thm:E2/E6^4}
For $m\in\N_0$, the $m$th Fourier coefficient of $E_2/E_6^4$ is given by
\begin{multline*}
\sideset{}{^*}\sum_{\mathfrak{b}\subseteq\mathcal{O}_{\Q(i)}}\left(\vphantom{ \sum_{j=0}^{4} \frac{\binom{4}{j} }{(27-j)!} \left(\frac{N(\mathfrak{b})}{2}\right)^j  }\right.\frac{3C_{32}\left(\mathfrak{b},m\right)}{2\pi^4E_4^8(i)N(\mathfrak{b})^{16}} 
\sum_{j=1}^{4}
\binom{27}{j-1}
\frac{
\left(4\pi  m\right)^{4-j}}{(4-j)!} N(\mathfrak{b})
^j\\
-\frac{ C_{28}\left(\mathfrak{b},m\right)}{
468\pi^2E_4^7(i)N(\mathfrak{b})^{14}} \sum_{j=0}^{2}
\binom{26}{j}\left(59j-78\right)\frac{(4\pi m)^{2-j}}{(2-j)!}N(\mathfrak{b})^j
-\frac{47}{162 E_4^6(i)}\frac{C_{24}\left(\mathfrak{b},m\right)}{N(\mathfrak{b})^{12}} \left. \vphantom{ \sum_{j=0}^{4} \frac{\binom{4}{j}}{(27-j)!} \left(\frac{N(\mathfrak{b})}{-2i}\right)^j  }\right)e^{\frac{2\pi m}{N(\mathfrak{b})}}.
\end{multline*}
\end{theorem}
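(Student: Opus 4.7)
The strategy parallels the proof of Theorem \ref{thm:1/E6^4}, now combined with the quasi-meromorphic machinery of Section \ref{sec:Quasi}. Set $f:=1/E_6^4\in\SS_{-24}$ (weight $2-2k$ with $k=13$). Applying Proposition \ref{prop:E2mero} with $n=1$ yields
\[
F_1(z)\; :=\;\frac{\pi}{3}\widehat{E}_2(z)f(z)\;-\;\frac{1}{24}R_{-24}(f(z))\;\in\;\SS_{-22}.
\]
Substituting $\widehat{E}_2=E_2-\tfrac{3}{\pi y}$ and $R_{-24}(f)=2if'-\tfrac{24}{y}f$, the $1/y$-pieces cancel exactly, leaving $F_1=\tfrac{\pi}{3}E_2 f-\tfrac{i}{12}f'$, whence
\[
\frac{E_2(z)}{E_6^4(z)}\;=\;\frac{3}{\pi}F_1(z)\;+\;\frac{i}{4\pi}f'(z).
\]
Taking Fourier coefficients and using that the $m$th coefficient of $f'$ is $2\pi i m$ times that of $f$, the $m$th coefficient of $E_2/E_6^4$ equals $\tfrac{3}{\pi}$ times that of $F_1$ minus $\tfrac{m}{2}$ times that of $f$, the latter of which is supplied by Theorem \ref{thm:1/E6^4}.

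It remains to compute the Fourier coefficients of $F_1$. Since both $f$ and $E_2f$ have their only poles (modulo $\SL_2(\Z)$) at $i$, so does $F_1$. By Proposition \ref{prop:basis} applied with $k=12$, and since $\widetilde{\varepsilon}_{24+2n}(i)=0$ whenever $n$ is odd, we may write
\[
F_1\;=\;a_0\,R^0_{24,\mathfrak{z}}[H_{24}(\mathfrak{z},z)]_{\mathfrak{z}=i}\;+\;a_2\,R^2_{24,\mathfrak{z}}[H_{24}(\mathfrak{z},z)]_{\mathfrak{z}=i}\;+\;a_4\,R^4_{24,\mathfrak{z}}[H_{24}(\mathfrak{z},z)]_{\mathfrak{z}=i}
\]
for three unknown constants $a_0,a_2,a_4\in\C$, corresponding to leading pole orders $1,3,5$ respectively (by Lemma \ref{lem:Rprincpartexplicit}).

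To determine the $a_j$'s I would compute the principal part of $F_1=\tfrac{\pi}{3}E_2 f-\tfrac{i}{12}f'$ at $z=i$ by combining the Laurent expansion of $f=1/E_6^4$ through order $(z-i)^{-1}$ (with coefficients $\alpha,\beta,\gamma,\delta$ as determined in the proof of Lemma \ref{lem:1/E6^4rewrite}) with the Taylor expansion of $E_2$ at $i$ through $(z-i)^3$. The values $E_2(i)$, $E_2'(i)$, $E_2''(i)$ are given by \eqref{eqn:E2i}; $E_2'''(i)$ is obtained similarly by one more differentiation of Ramanujan's formula $E_2'=\tfrac{\pi i}{6}(E_2^2-E_4)$, using \eqref{eqn:E2i}--\eqref{eqn:E6''i}. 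The principal parts of the three basis elements are then read off from Lemma \ref{lem:Rprincpartexplicit}, and matching the $(z-i)^{-5},(z-i)^{-3},(z-i)^{-1}$ coefficients sequentially solves for $a_4$, then $a_2$, then $a_0$; the even-order pole coefficients are forced to agree by the elliptic expansion constraint on $F_1\in\SS_{-22}$ and serve as a consistency check.

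Once $a_0,a_2,a_4$ are known explicitly, Theorem \ref{Fouriercoefficients} applied with $k=12$ immediately expresses the Fourier coefficients of $F_1$ as linear combinations of the $C_{24},C_{28},C_{32}$ sums (the weights $24,28,32$ arising as $2k+2n$ for $n=0,2,4$). Adding $-\tfrac{m}{2}$ times the $C_{32}$ and $C_{28}$ contributions from Theorem \ref{thm:1/E6^4} and regrouping produces precisely the three-piece formula in the statement. The main obstacle is the bookkeeping in matching principal parts: one must expand $E_2f$ and $f'$ through five Laurent terms and solve a triangular linear system for $a_4,a_2,a_0$, and then reconcile the $C_{32}$ pieces contributed by $a_4R^4$ and by $-\tfrac{m}{2}\cdot(\text{Theorem }\ref{thm:1/E6^4})$ into the single $\binom{27}{j-1}$-indexed sum in the theorem --- this rests on a binomial identity of the type used in \eqref{eqn:binom}.
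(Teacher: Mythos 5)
Your proposal is correct and follows essentially the same route as the paper: the paper likewise specializes Proposition \ref{prop:E2mero} to $n=1$ to get $F_1=\frac{\pi}{3}E_2f-\frac{i}{12}f'$ (so that $E_2/E_6^4=\frac{3}{\pi}F_1+\frac{i}{4\pi}f'$), expresses $F_1$ in the basis $R_{24,\mathfrak{z}}^{n}\left[H_{24}(\mathfrak{z},z)\right]_{\mathfrak{z}=i}$ for $n=0,2,4$ by matching principal parts at $z=i$ via Lemma \ref{lem:Rprincpartexplicit} and the values $E_2(i),\dots,E_2'''(i)$, and then combines Theorem \ref{Fouriercoefficients} with the differentiated form of Theorem \ref{thm:1/E6^4}. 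The only cosmetic difference is that you verify the cancellation of the $1/y$-terms directly instead of citing Lemma \ref{lem:raisemp}, and you assemble the final answer term by term rather than first recording the intermediate identity (the paper's Lemma \ref{lem:E2/E6^4}).
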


In order to prove 
Theorem \ref{thm:E2/E6^4},
 we first rewrite $E_2/E_6^4$ as a linear combination of derivatives of $1/E_6^4$ 
and meromorphic modular forms.
\begin{lemma}\label{lem:E2/E6^4}
We have 
$$
\frac{E_2(z)}{E_6^4(z)} = \frac{R_{24,\mathfrak{z}}^4\left[H_{24}(\mathfrak{z},z)\right]_{\mathfrak{z}=i}}{
384
\pi^4 E_4^8(i)}
-
  \frac{5R_{24,\mathfrak{z}}^2\left[H_{24}(\mathfrak{z},z)\right]_{\mathfrak{z}=i}}{
432
\pi^2 E_4^7(i) } - \frac{
47
H_{24}(i,z)}{648 E_4^6(i)}+\frac{i}{4\pi}\frac{\partial}{\partial z} \frac{1}{E_6^4(z)}.
$$
\end{lemma}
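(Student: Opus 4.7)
The plan is to mirror the principal-part matching used in the proof of Lemma \ref{lem:1/E6^4rewrite}, after first stripping off an explicit derivative term. Specifically, I would apply Proposition \ref{prop:E2mero} with $f=1/E_6^4\in\SS_{-24}$ (so $k=13$) and $n=1$ to produce
\begin{equation*}
F_1:=\frac{\pi}{3}\widehat{E}_2\cdot\frac{1}{E_6^4}-\frac{1}{24}R_{-24}\!\left(\frac{1}{E_6^4}\right)\in\SS_{-22}.
\end{equation*}
Expanding $\widehat{E}_2=E_2-3/(\pi y)$ and $R_{-24}(1/E_6^4)=2i(1/E_6^4)'-(24/y)(1/E_6^4)$, the non-holomorphic $1/y$ contributions cancel and I obtain
\begin{equation*}
\frac{E_2(z)}{E_6^4(z)}=\frac{3}{\pi}F_1(z)+\frac{i}{4\pi}\frac{\partial}{\partial z}\frac{1}{E_6^4(z)}.
\end{equation*}
It therefore suffices to identify $F_1$ as the appropriate linear combination of basis elements from Proposition \ref{prop:basis}.

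Since the only poles of $F_1$ occur at $z=i$ modulo $\SL_2(\Z)$, Proposition \ref{prop:basis} yields a representation $F_1=\sum_{n\geq 0}a_n R_{24,\mathfrak{z}}^n[H_{24}(\mathfrak{z},z)]_{\mathfrak{z}=i}$. Lemma \ref{lem:congruence} (with $k=12$ and $\omega_i=2$) forces only even $n$ to contribute, while the leading pole of $(1/E_6^4)'$ at $i$ is of order $5$, cutting the sum off at $n=4$. Thus
\begin{equation*}
F_1=a_4 R_{24,\mathfrak{z}}^4[H_{24}]_{\mathfrak{z}=i}+a_2 R_{24,\mathfrak{z}}^2[H_{24}]_{\mathfrak{z}=i}+a_0 H_{24}(i,z),
\end{equation*}
and the three unknowns $a_0, a_2, a_4$ are pinned down by matching the coefficients of $(z-i)^{-5}$, $(z-i)^{-3}$, and $(z-i)^{-1}$ in the Laurent expansion of $F_1$ against the principal parts supplied by Lemma \ref{lem:Rprincpartexplicit}, noting that $\widetilde{\varepsilon}_{24+2n}(i)=i/\pi$ for $n\in\{0,2,4\}$ and $v_0=1$.

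To produce the left-hand side of this match, I would extend the Laurent expansion of $1/E_6^4$ at $z=i$ computed in the proof of Lemma \ref{lem:1/E6^4rewrite} by two more coefficients via the L'Hospital/Taylor scheme underlying \eqref{eqn:gamma1}. This requires a handful of iterated derivatives $E_6^{(j)}(i)$, each obtained from Ramanujan's differential equations recalled in (30) of \cite{RamanujanArithmetic} together with \eqref{eqn:E6'i} and \eqref{eqn:E6''i}; the Taylor expansion of $E_2$ at $i$ from \eqref{eqn:E2i}, extended by one further derivative, supplies the remaining input. Multiplying out $(\pi/3)E_2/E_6^4$ and termwise differentiating $1/E_6^4$ to form $-(i/12)(1/E_6^4)'$ then produces the Laurent coefficients of $F_1$ as explicit polynomial expressions in $E_4(i)$.

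The main obstacle will be this bookkeeping: the pole order has risen by one compared with Lemma \ref{lem:1/E6^4rewrite}, and three basis coefficients must be determined simultaneously. However, the congruence condition eliminates even-order poles and produces a triangular linear system in $a_4, a_2, a_0$ whose unique solution is $a_4=1/(1152\pi^3 E_4^8(i))$, $a_2=-5/(1296\pi E_4^7(i))$, and $a_0=-47\pi/(1944 E_4^6(i))$. Multiplying by $3/\pi$ and substituting into the decomposition above recovers the stated identity.
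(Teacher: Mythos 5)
Your proposal is correct and follows essentially the same route as the paper: Proposition \ref{prop:E2mero} with $n=1$ yields $E_2/E_6^4=\frac{3}{\pi}F_1+\frac{i}{4\pi}\frac{\partial}{\partial z}\frac{1}{E_6^4}$, and $F_1$ is then expressed in the basis by matching principal parts at $z=i$ against Lemma \ref{lem:Rprincpartexplicit}, using the coefficients $\alpha,\beta,\gamma,\delta$ already computed in the proof of Lemma \ref{lem:1/E6^4rewrite} together with $E_2(i),\dots,E_2'''(i)$. The only superfluous step is your plan to compute two further Laurent coefficients of $1/E_6^4$: since multiplication by $E_2$ and differentiation carry the regular part of $1/E_6^4$ to something regular at $i$, the principal part of $F_1$ is already determined by $P_i$ alone.
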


\begin{proof}
Using that $F_1$ is meromorphic by Proposition \ref{prop:E2mero}, Lemma \ref{lem:raisemp} implies that 
$$
F_1(z)=\frac{\pi}{3}E_2(z)f(z)-
\frac{i}{k-1}
 f^{\prime}(z).
$$
In the special case that $f=1/E_6^4$, we have $
k=13
$ and 
thus obtain
\begin{equation}\label{eqn:E2/E6^4F1}
\frac{E_2(z)}{E_6^4(z)} = 	\frac{3}{\pi} F_1(z) +\frac{i}{4\pi}\frac{\partial}{\partial z} \frac{1}{E_6^4(z)}.
%\frac{E_2^2(z)}{E_6^4(z)} &= 	\frac{9}{\pi^2} F_2(z) +\frac{9(2i)^2}{506\pi^2}\frac{\partial^2}{\partial z^2} \frac{1}{E_6^4(z)} - \frac{3(2i)}{11\pi}\frac{\partial}{\partial z} \frac{E_2(z)}{E_6^4(z)},\\
%\frac{E_2^3(z)}{E_6^4(z)}&=\frac{27}{\pi^3} F_3(z) + \frac{9(2i)}{20\pi}\frac{\partial}{\partial z} \frac{E_2^2(z)}{E_6^4(z)} 
%-\frac{9(2i)^2}{140 \pi^2}\frac{\partial^2}{\partial z^2}\frac{E_2(z)}{E_6^4(z)} 
%+\frac{27(2i)^3}{6840\pi^3}\frac{\partial^3}{\partial z^3}\frac{1}{E_6^4(z)}.
\end{equation}
We now
 proceed as in the proof of \eqref{eqn:fexpandLaur} to rewrite $
F_1
$
 in the basis given in Proposition \ref{prop:basis}.  For this, we determine the principal part of $F_1$ at $z=i$.

As computed above, the principal part of $1/E_6^4$ is $P_i$ defined in \eqref{eqn:P0def} with $\alpha$ as in \eqref{eqn:alphaeval}, $\gamma$ given by \eqref{eqn:gammaeval}, $\beta=-14i\alpha$, and $\delta=-13i\gamma-819i\alpha$ (see \eqref{eqn:alphabetaE64} and \eqref{eqn:deltaE64}).  Taking the meromorphic part of both sides of \eqref{eqn:gdef}, the principal part of $F_1$ agrees with the principal part of 
\begin{multline*}
\frac{\pi}{3}E_2(z)P_i(z) -\frac{i}{12}\frac{\partial}{\partial z} P_i(z)=\frac{i\alpha}{3(z-i)^5} + \frac{\frac{\pi}{3}E_2(i)\alpha +\frac{i}{4}\beta}{(z-i)^4} +\frac{\frac{\pi}{3}E_2(i)\beta + \frac{\pi}{3}E_2'(i)\alpha+\frac{i}{6}\gamma}{(z-i)^3}\\
 + \frac{\frac{\pi}{3}E_2(i)\gamma + \frac{\pi}{3}E_2'(i)\beta+\frac{\pi}{3}\frac{E_2''(i)\alpha}{2}+\frac{i}{12}\delta}{(z-i)^2} + \frac{\frac{\pi}{3}E_2(i)\delta + \frac{\pi}{3}E_2'(i)\gamma + \frac{\pi}{3}\frac{E_2''(i) \beta}{2} + \frac{\pi}{3}\frac{E_2'''(i)\alpha }{6}}{z-i}+O(1).
\end{multline*}

Comparing the principal parts with those in Lemma \ref{lem:Rprincpartexplicit}, $F_1$ is a linear combination of the functions $R_{24,\mathfrak{z}}^{j}\left[H_{24}(\mathfrak{z},z)\right]_{\mathfrak{z}=i}$ for $j=0,2,4$.  To compute the explicit linear combination, we simplify the principal part by plugging in $E_2(i)$, $E_2'(i)$, 
and $E_2''(i)$ 
from \eqref{eqn:E2i}, and 
\begin{equation*}%\label{eqn:E2'''i}
E_2'''(i) =-\frac{9i}{4\pi}+\frac{3\pi i}{2} E_4(i)+\frac{\pi^3 i}{12} E_4^2(i).
\end{equation*}

A lengthy but straightforward calculation, comparing the principal parts with those in Lemma \ref{lem:Rprincpartexplicit}, as in  the proof of Lemma \ref{lem:1/E6^4rewrite}, yields 
\begin{equation}\label{eqn:F1final}
F_1(z)=\frac{\pi \alpha R_{24,\mathfrak{z}}^4\left[H_{24}(\mathfrak{z},z)\right]_{\mathfrak{z}=i}}{1152} -  \frac{5\pi^3E_4(i) \alpha R_{24,\mathfrak{z}}^2\left[H_{24}(\mathfrak{z},z)\right]_{\mathfrak{z}=i}}{1296} - \frac{47\pi^5E_4^2(i)\alpha H_{24}(i,z)}{1944},
\end{equation}
where we used 
 \eqref{eqn:Resztau} to evaluate $\widetilde{\varepsilon}_{4j}(i)=i/\pi$ for all $j\in \N$.  Using \eqref{eqn:alphaeval} and plugging \eqref{eqn:F1final} into \eqref{eqn:E2/E6^4F1} then gives the claim. 
\end{proof}
We are now ready to prove 
Theorem \ref{thm:E2/E6^4}.
\begin{proof}[Proof of Theorem \ref{thm:E2/E6^4}]
We individually compute the Fourier coefficients of each term in Lemma \ref{lem:E2/E6^4}.  Differentiating 
Theorem \ref{thm:1/E6^4} 
gives 
that the $m$th Fourier coefficient of $\frac{i}{4\pi}\frac{\partial}{\partial z} 1/E_6^4 (z)$ equals
\begin{multline*}
\sideset{}{^*}\sum_{\mathfrak{b}\subseteq\mathcal{O}_{\Q(i)}}\Bigg(-\frac{C_{32}\left(\mathfrak{b},m\right)}{
16 \pi^4 E_4^8(i)N(\mathfrak{b})^{16}}  \sum_{j=0}^{3} \binom{28}{j} \frac{\left(4\pi m\right)^{4-j}}{
(3-j)!}
N(\mathfrak{b})^j
\\
+\frac{7C_{28}\left(\mathfrak{b},m\right)}{54\pi^2 E_4^7(i)N(\mathfrak{b})^{14}} \sum_{j=0}^{1}
\binom{26}{j} \left(
4\pi m\right)^{2-j}
N(\mathfrak{b})^j\Bigg)e^{\frac{2\pi m}{N(\mathfrak{b})}} .
\end{multline*}

The coefficients of the other terms are then directly determined 
by
 Theorem \ref{Fouriercoefficients}.   A lengthy but straightforward calculation combining like terms and simplifying yields the statement of the proposition. 
\end{proof}

\subsection{Proof of Theorem \ref{thm:E2^n/E10}}%A uniform formula for quasimeromorphic forms with simple poles}
In this section, we prove a uniform formula for powers of $\widehat{E}_2$ times meromorphic forms with simple poles at elliptic fixed points.
Before stating the general result, we note that $E_{10}$ has simple poles exactly at $i$ and $\rho$ modulo $\SL_2(\Z)$, and hence one may uniquely choose  $a_1,a_2\in \C$ such that
$$
\frac{1}{E_{10}(z)} = a_1 H_{12}(i,z) + a_2H_{12}(\rho,z).
$$
A direct calculation, using 
\begin{align*}
E_4(\rho)
&
=0,
&E_{10}'(i)&=E_4(i)E_6'(i)=-\pi i E_4^3(i),& E_{10}'(\rho)&=E_4'(\rho)E_6(\rho)=-\frac{2\pi i}{3}  E_6^2(\rho)
\end{align*}
 to compare
 the residues at $\mathfrak{z}=i$ and $\mathfrak{z}=\rho$ on both sides, yields 
\begin{equation}\label{a12}
a_1 =\frac1{E_4^3(i)},\qquad a_2 =\frac1{E_6^2(\rho)}.
\end{equation}

To state the general result from which Theorem \ref{thm:E2^n/E10} then follows, we recursively define 
\begin{equation}\label{eqn:Hkncompdef}
\mathcal{H}_{2k,j}(\mathfrak{z},z):=-\sum_{l=0}^{j-1} \binom{j}{l} (-y)^{l-j}\mathcal{H}_{2k,l}(\mathfrak{z},z)
+\sum_{l=0}^{j}
c_{k,l,j}
\binom{j}{l} R_{2k-2l,\mathfrak{z}}^l \circ R_{2-2k+2l,z}^{j-l} \left(\mathcal{H}_{2k-2l}(\mathfrak{z},z)\right).
\end{equation}
where $c_{k,l,j}:=\frac{(2k-l-j-2)!}{(2k-l-1)!}(2k-2l-1)$.

Since the functions $ \mathcal{H}_{2k-2l}$ satisfy weight $2k-2l$ modularity as functions of $\mathfrak{z}$ and weight $2-2k+2l$ modularity as functions of $z$ by Proposition \ref{prop:modulartau}, one inductively sees that the functions $\sum_{l=0}^{j}\binom{j}{l}(-y)^{l-j}\mathcal{H}_{2k,l}(\mathfrak{z},z)$  are also modular in both variables.  

\begin{theorem}\label{thm:E2^n}
Suppose that $\tau_1,\dots, \tau_r$ 
(with $r=1$ or $r=2$) are elliptic fixed points and $a_{1},\dots, a_r\in \C$ such that $\sum_{m=1}^{r}a_{m} \mathcal{H}_{2k}\left(\tau_{m},z\right)$ is meromorphic.  Then we have, for all $
k>j+1
$, 
\begin{equation}\label{eqn:E2gen}
 \sum_{l=0}^{j} \binom{j}{l} (-y)^{l-j}\sum_{m=1}^r a_{m} \mathcal{H}_{2k,l}\left(\tau_{m},z\right) = \left(\frac{\pi}{3}\right)^j \widehat{E}_2^j(z) \sum_{m=1}^r a_{m}H_{2k}\left(\tau_{m},z\right).
\end{equation}
Furthermore, for every $0\leq j<k-1$, we have
\begin{equation}\label{eqn:HkjE2pow}
\sum_{m=1}^r a_{m} \mathcal{H}_{2k,j}\left(\tau_{m},z\right) = \sum_{m
=1}^r a_{m} H_{2k,j}\left(\tau_{m},z\right) = \left(\frac{\pi}{3}\right)^j E_2^j(z) \sum_{m=1}^r a_{m} H_{2k}\left(\tau_{m},z\right).
\end{equation}
\end{theorem}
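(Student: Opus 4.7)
The plan is to prove Theorem \ref{thm:E2^n} by induction on $j$, first establishing \eqref{eqn:E2gen} and then deducing \eqref{eqn:HkjE2pow} from it. Both statements at $j = 0$ reduce to $\sum_m a_m \mathcal{H}_{2k}(\tau_m, z) = \sum_m a_m H_{2k}(\tau_m, z)$: by \eqref{eqn:Hkncompdef} we have $\mathcal{H}_{2k, 0} = \mathcal{H}_{2k}$ (since $c_{k, 0, 0} = 1$), and the assumed meromorphy of the left-hand side combined with Proposition \ref{prop:HkPsi} (identifying $H_{2k}$ as the meromorphic part in $z$ of $\mathcal{H}_{2k}$) forces the non-meromorphic parts to cancel.

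For the inductive step of \eqref{eqn:E2gen}, write $A_j$ and $B_j$ for the left- and right-hand sides, and assume $A_{j-1} = B_{j-1}$. I would apply the lowering operator $L_{2-2k+2j, z}$ to both. On the right, Leibniz together with $L_2(\widehat{E}_2) = 3/\pi$ from \eqref{eqn:E2lower} and the meromorphy in $z$ of $H_{2k}(\tau_m, z)$ yields $L_z(B_j) = j B_{j-1}$. On the left, combining the product rule $L_z((-y)^a h) = -a(-y)^{a+1} h + (-y)^a L_z(h)$ with the identity $\binom{j}{l}(j-l) = j\binom{j-1}{l}$ gives
\begin{equation*}
L_z(A_j) = j A_{j-1} + \sum_{l=0}^{j}\binom{j}{l}(-y)^{l-j}\sum_m a_m L_z \mathcal{H}_{2k, l}(\tau_m, z).
\end{equation*}
By the inductive hypothesis $jA_{j-1} = jB_{j-1}$, the argument reduces to showing that the remaining sum vanishes. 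This I would handle by substituting the recursive definition \eqref{eqn:Hkncompdef} for each $\mathcal{H}_{2k, l}$, distributing $L_z$ via the iterated commutation relation between $L$ and $R$ coming from \eqref{eqn:LR}, and verifying that the coefficients $c_{k, l, j}$ were chosen precisely so the resulting terms telescope, the residual $L_z \mathcal{H}_{2k-2l}$ contributions vanishing after summation against the $a_m$ by virtue of the meromorphy hypothesis. Once $L_z(A_j - B_j) = 0$ is established, the difference is meromorphic of negative weight $2-2k+2j$, bounded at $i\infty$, with principal parts at the $\tau_m$ that match via Lemma \ref{lem:Rprincpart} applied to the building blocks $R_{2k-2l, \mathfrak{z}}^l R_{2-2k+2l, z}^{j-l} \mathcal{H}_{2k-2l}$ appearing in \eqref{eqn:Hkncompdef}; it must therefore vanish, giving $A_j = B_j$.

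To derive \eqref{eqn:HkjE2pow} from \eqref{eqn:E2gen}, I would expand $\widehat{E}_2^j = (E_2 - 3/(\pi y))^j$ binomially on the right of \eqref{eqn:E2gen}, rewriting $B_j = \sum_{l=0}^{j}\binom{j}{l}(-y)^{l-j}(\pi/3)^l E_2^l \sum_m a_m H_{2k}(\tau_m, z)$. A secondary induction on $j$ (with the same base case) using $A_j = B_j$ and the $l < j$ cases of \eqref{eqn:HkjE2pow} isolates the $l = j$ term and gives the second equality of \eqref{eqn:HkjE2pow} at level $j$. The first equality then follows by verifying inductively via \eqref{eqn:Hkncompdef} that the meromorphic part in $z$ of $\mathcal{H}_{2k, j}$ equals $H_{2k, j}$ — the natural extension of Proposition \ref{prop:HkPsi}'s identification at $j = 0$ — so that the already-proved meromorphy forces the non-meromorphic parts of $\sum_m a_m \mathcal{H}_{2k, j}(\tau_m, z)$ to cancel.

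The main obstacle is verifying the vanishing of the extra sum $\sum_l \binom{j}{l}(-y)^{l-j}\sum_m a_m L_z \mathcal{H}_{2k, l}(\tau_m, z)$ in the inductive step for \eqref{eqn:E2gen}. This requires careful bookkeeping of the $L$-$R$ commutators applied inside \eqref{eqn:Hkncompdef}, and the specific choice of $c_{k, l, j} = (2k - l - j - 2)!(2k - 2l - 1)/(2k - l - 1)!$ is exactly what makes the telescoping exact.
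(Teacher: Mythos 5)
Your overall architecture (induction on $j$, kill the difference of the two sides of \eqref{eqn:E2gen} with a differential operator, then use negative weight and absence of singularities) matches the paper's, but there is a genuine gap at the crucial step: the vanishing of the leftover sum $\sum_{l=0}^{j}\binom{j}{l}(-y)^{l-j}\sum_m a_m L_z\bigl(\mathcal{H}_{2k,l}(\tau_m,z)\bigr)$. The $l=j$ term of this sum vanishes precisely when $\sum_m a_m\mathcal{H}_{2k,j}(\tau_m,z)$ is meromorphic, which is the first identity of \eqref{eqn:HkjE2pow} at level $j$ --- the very statement you plan to deduce \emph{from} \eqref{eqn:E2gen} afterwards, so this reading is circular. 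Your alternative justification, substituting \eqref{eqn:Hkncompdef} and commuting $L$ past the raising operators, produces residual terms $\sum_m a_m\,R_{2k-2l,\mathfrak{z}}^{l}\circ R_{2-2k+2l,z}^{j-l}\bigl(L_z\mathcal{H}_{2k-2l}^-(\mathfrak{z},z)\bigr)\big|_{\mathfrak{z}=\tau_m}$ for $1\le l\le j$, and these do \emph{not} vanish ``by virtue of the meromorphy hypothesis'': the hypothesis only concerns $\sum_m a_m\mathcal{H}_{2k}(\tau_m,z)$, i.e.\ the $l=0$, weight-$2k$ term, and says nothing about the lower-weight completions $\mathcal{H}_{2k-2l}$ for $l\ge 1$. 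Already at $j=1$ one is left needing $\sum_m a_m R_{2k-2,\mathfrak{z}}\bigl[L_z\mathcal{H}_{2k-2}^-(\mathfrak{z},z)\bigr]_{\mathfrak{z}=\tau_m}=0$, which is true but is essentially the statement $g_{1,1}\equiv 0$ from the paper's proof of \eqref{eqn:HkjE2pow} --- the hard content of the theorem, not a consequence of the hypothesis. The choice of $c_{k,l,j}$ does not make these terms telescope away; it only arranges the \emph{other} batch of terms into $jA_{j-1}$.

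The paper circumvents exactly this obstruction by applying the full Laplacian $\Delta_{2-2k+2j,z}$ rather than the lowering operator. Since $\Delta_z$ commutes with $R_{\mathfrak{z}}$ and annihilates each $\mathcal{H}_{2k-2l}$ in $z$, every building block $R_{2k-2l,\mathfrak{z}}^{l}\circ R_{2-2k+2l,z}^{j-l}(\mathcal{H}_{2k-2l})$ is a genuine $\Delta_{2-2k+2j,z}$-eigenfunction with eigenvalue $(j-l)(j-2k+l+1)$ (via \eqref{eqn:DelEV}), so no residual non-meromorphic contributions appear at all; the identity $c_{k,l,j-1}=(2k-l-j-1)c_{k,l,j}$ then gives $\Delta(A_j)=-jR_{-2k+2j,z}(A_{j-1})=-jR_{-2k+2j,z}(B_{j-1})=\Delta(B_j)$ by the inductive hypothesis alone, and one concludes with harmonicity plus boundedness at the $\tau_m$ (the left side via the rewriting \eqref{eqn:1/v-1/ypow}, the right side because $\widehat{E}_2$ vanishes to order $j$ at elliptic fixed points). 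The eigenvalue decomposition is then reused, \emph{after} \eqref{eqn:E2gen} is fully established, to prove the first identity of \eqref{eqn:HkjE2pow}: the distinct eigenvalues $\alpha_{l,j}$ force each $g_{l,j}$ with $\alpha_{l,j}\neq 0$ to vanish, and the remaining piece $g_{j,j}$ is simultaneously meromorphic and the non-meromorphic part of a polar harmonic Maass form, hence zero. To repair your argument you would either need to switch from $L$ to $\Delta$ as the paper does, or supply an independent proof of the meromorphy of $\sum_m a_m\mathcal{H}_{2k,l}(\tau_m,z)$ for all $l\le j$ before running the induction for \eqref{eqn:E2gen}.
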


Before proving Theorem \ref{thm:E2^n}, we first relate $H_{2k,j}$ to $\mathcal{H}_{2k,j}$ by showing that they satisfy the same recurrence relation with the polar harmonic Maass form $\mathcal{H}_{2k-2l}$ replaced by its meromorphic part $H_{2k-2l}$.
\begin{proposition}\label{prop:Hkn}
For $j<k$, we have 
\begin{equation}\label{eqn:Hkn}
\sum_{l=0}^{j}\binom{j}{l}(-y)^{l-j}H_{2k,l}(\mathfrak{z},z)=\sum_{l=0}^{j}
c_{k,l,j}
\binom{j}{l} R_{2k-2l,\mathfrak{z}}^{l} \circ R_{2-2k+2l,z}^{j-l} \left(H_{2k-2l}(\mathfrak{z},z)\right).
\end{equation}
\end{proposition}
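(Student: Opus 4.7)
The plan is to prove \eqref{eqn:Hkn} by direct expansion and Fourier coefficient matching. First I would apply Proposition \ref{prop:Rtaugen} (with $k$ replaced by $k-l$ and $n=l$) to rewrite
\[R_{2k-2l,\mathfrak{z}}^{l}\left(H_{2k-2l}(\mathfrak{z},z)\right)=\sum_{m=0}^{l}\binom{l}{m}\frac{(2k-l-1)!}{(2k-l-m-1)!}(-2i)^{l-m}\partial_z^{l-m}H_{2k,m}(\mathfrak{z},z).\]
Since $R_{2-2k+2l,z}$ commutes with $\partial_z$ and with $R_{2k-2l,\mathfrak{z}}$ (as these operators act on disjoint variables), I would then apply the standard iterated-raising formula
\[R_\kappa^n(f)=\sum_{r=0}^{n}\binom{n}{r}(2i)^{n-r}\frac{(\kappa+n-1)!}{(\kappa+n-r-1)!}\frac{f^{(n-r)}}{y^r}\]
to the meromorphic functions $\partial_z^{l-m}H_{2k,m}$, producing a triple sum in $(l,m,r)$ whose terms are $\partial_z^{j-m-r}H_{2k,m}/y^r$ times explicit factorial coefficients.

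Next, I would Fourier-expand both sides of \eqref{eqn:Hkn} in the $z$-variable via Theorem \ref{genFC}. After canceling the common factor $B_{2k,c,d}(\mathfrak{z},n)e^{2\pi inz}$ for each coprime pair $(c,d)$ and each $n\geq 0$, the LHS coefficient becomes the $n$-independent expression $(-y)^{-j}(1-y|c\mathfrak{z}+d|^2/\mathfrak{z}_2)^j$. The RHS coefficient is a triple sum in which $n$ appears only through the factor $(2\pi in)^{j-m-r}$, so matching splits into two checks: (i) the $n^0$ contribution (which forces $l=m$ and $r=j-l$) should equal the LHS, and (ii) each $n^t$ coefficient with $t\geq 1$ should vanish identically in $y$ and $\mathfrak{z}$.

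Check (i) is a telescoping of factorials: substituting $c_{k,l,j}=\frac{(2k-l-j-2)!(2k-2l-1)}{(2k-l-1)!}$ and simplifying, the contribution from each $m$ collapses to $\binom{j}{m}(-1)^{j-m}y^{m-j}(|c\mathfrak{z}+d|^2/\mathfrak{z}_2)^m$, which sums via the binomial theorem to the desired LHS. Check (ii) reduces, via $\binom{j-m}{u}\binom{j-m-u}{t-u}=\binom{j-m}{t}\binom{t}{u}$, to the combinatorial identity
\[\sum_{u=0}^{t}(-1)^u\binom{t}{u}\frac{K-2u-1}{(K-u-1)(K-u-2)\cdots(K-u-t-1)}=0 \qquad (t\geq 1),\]
with $K=2k-2m$. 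I would verify this by decomposing the summand into partial fractions $\frac{K-2u-1}{\prod_{s=1}^{t+1}(K-u-s)}=\sum_{s=1}^{t+1}\frac{A_s}{K-u-s}$ with explicit $A_s=\frac{(2s-K-1)(-1)^{t+1-s}}{(s-1)!(t+1-s)!}$, and applying the classical identity $\sum_{u=0}^{t}(-1)^u\binom{t}{u}/(x-u)=(-1)^tt!/\prod_{r=0}^{t}(x-r)$. The resulting expression, after reindexing $s\leftrightarrow u+1$, turns out to equal the negative of the original sum, forcing it to vanish.

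The main obstacle will be verifying this combinatorial identity cleanly; the self-reciprocity under $s\leftrightarrow u+1$ in the partial-fractions approach is what closes the proof. An alternative route is to induct on $j$ via the commutator relation \eqref{eqn:RtauHkj} (which expresses $H_{2k,j}$ through $H_{2k-2,j-1}$ and $H_{2k,j-1}$), but this appears to invoke essentially the same identity at each step.
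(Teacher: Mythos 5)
Your proposal is correct, but it takes a genuinely different route from the paper. The paper proves \eqref{eqn:Hkn} by induction on $j$: it first collapses the left-hand side via the binomial theorem into the single Poincar\'e series \eqref{eqn:1/v-1/ypow} with summand $\left(\tfrac{1}{\mathfrak{z}_2}-\tfrac{1}{y}\right)^j\left(1-e^{2\pi i(z-\mathfrak{z})}\right)^{-1}$, then shows that applying $R_{2k-2,\mathfrak{z}}$ and $R_{2-2k+2j,z}$ and adding produces $(2k-2-j)$ times the case $j+1$ on both sides, the right-hand side being handled by the same binomial identity \eqref{eqn:binom} used for Proposition \ref{prop:Rtaugen}. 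You instead expand everything into the ``monomials'' $y^{-r}\partial_z^{s}H_{2k,m}$ and match Fourier coefficients via Theorem \ref{genFC}; I checked your reduction and it is sound: the $n^0$ terms do telescope to the left-hand side, and the $n^t$-cancellation does reduce (via $\binom{j-m}{u}\binom{j-m-u}{t-u}=\binom{j-m}{t}\binom{t}{u}$) to exactly the alternating sum you state, whose partial-fractions proof closes as you describe (note your $A_s$ are the residues in the variable $u$, not $K$, and one must keep the factor $(-1)^t$ from the classical identity for the self-reciprocity to give $S=-S$ rather than $S=-(-1)^{2t}S$). The paper's induction is shorter and makes the eigenfunction structure of the right-hand side transparent; your computation is more mechanical but completely explicit and verifies every coefficient directly. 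Two small points to make explicit in a write-up: the factorial ratios $\frac{(\kappa+n-1)!}{(\kappa+n-r-1)!}$ with $\kappa=2-2k+2l\le 0$ must be read as falling factorials; and Theorem \ref{genFC} only converges for $y>\im(M\mathfrak{z})$ for all $M$, so you prove the identity in that region first and then extend by analytic continuation exactly as the paper does when it restricts to $\mathfrak{z}_2>\max(y,1/y)$.
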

\begin{remark}
The right-hand side of \eqref{eqn:Hkn} gives a decomposition of the left-hand side into eigenfunctions under $\Delta_{2-2k+2j,z}$ with different eigenvalues.  
\end{remark}
\begin{proof}
We prove the result by induction on $j$.  The base case $j=0$ is trivial.  

We inductively assume that \eqref{eqn:Hkn} holds for $j$ and show the claim for $j+1$.  We begin by rewriting the left-hand side of \eqref{eqn:Hkn} as 
\begin{equation}\label{eqn:1/v-1/ypow}
\sum_{M\in \Gamma_{\infty}\backslash \SL_2(\Z)} \left(\frac{\sum_{l=0}^{j}
\binom{j}{l} 
(-y)^{l-j}\mathfrak{z}_2^{-l}}{1-e^{2\pi i (z-\mathfrak{z})}}\right)\bigg|_{2k,\mathfrak{z}} M=\sum_{M\in \Gamma_{\infty}\backslash \SL_2(\Z)} \left(\frac{\left(\frac{1}{\mathfrak{z}_2}-\frac{1}{y}\right)^{
j}}{1-e^{2\pi i (z-\mathfrak{z})}}\right)\bigg|_{2k,\mathfrak{z}} M.
\end{equation}

Now note that both sides of \eqref{eqn:Hkn} are almost meromorphic modular forms as functions of $\mathfrak{z}$.  Writing them both in the form of \eqref{eqn:almostmeroexp}, they are equal if and only if the corresponding meromorphic functions $f_j$ agree for all $j$. For this, by analytic continuation, it suffices to prove the claim for $\mathfrak{z}_2>\max(y,1/y)$.  Under this assumption, 
we may expand the geometric series in each term on the right-hand side of \eqref{eqn:1/v-1/ypow} as well as in the definition of $H_{2k-2l}$ on the right-hand side of \eqref{eqn:Hkn}.  We now apply induction, rewrite the left-hand side of \eqref{eqn:Hkn} with the right-hand side of \eqref{eqn:1/v-1/ypow} and then expand the geometric series.  We then prove \eqref{eqn:Hkn} for $j+1$ by applying raising operators in $\mathfrak{z}$ and $z$ to the right-hand sides of \eqref{eqn:Hkn} and \eqref{eqn:1/v-1/ypow}.  Commuting the raising operator in $\mathfrak{z}$ with the slash operator, the image of the action of $R_{2k,\mathfrak{z}}$ on the right-hand side of \eqref{eqn:1/v-1/ypow} equals
\begin{equation}\label{eqn:Rtaugen}
\sum_{n=0}^{\infty}\sum_{M\in \Gamma_{\infty}\backslash \SL_2(\Z)} \left(\left(\frac{1}{\mathfrak{z}_2}-\frac{1}{y}\right)^{j-1}\left(-\frac{j}{\mathfrak{z}_2^2} + 4\pi n\left(\frac{1}{\mathfrak{z}_2}-\frac{1}{y}\right) +\frac{2k}{\mathfrak{z}_2}\left(\frac{1}{\mathfrak{z}_2}-\frac{1}{y}\right)\right)e^{-2\pi i n\mathfrak{z}}\right)\bigg|_{2k+2,\mathfrak{z}} M e^{2\pi i n z}.
\end{equation}
Furthermore, applying $R_{2-2k+2j,z}$ to \eqref{eqn:1/v-1/ypow} yields 
\begin{equation}\label{eqn:Rzgen}
\sum_{n=0}^{\infty}\sum_{M\in \Gamma_{\infty}\backslash \SL_2(\Z)}\!\! \bigg(\!\left(\frac{1}{\mathfrak{z}_2}-\frac{1}{y}\right)^{j-1}\!\bigg(\frac{j}{y^2} - 4\pi n\!\left(\frac{1}{\mathfrak{z}_2}-\frac{1}{y}\right) +\frac{2-2k+2j}{y}\left(\frac{1}{\mathfrak{z}_2}-\frac{1}{y}\right)\!\bigg)e^{-2\pi i n\mathfrak{z}}\bigg)\bigg|_{2k,\mathfrak{z}} M e^{2\pi i n z}.
\end{equation}
After simplifying, the sum of \eqref{eqn:Rzgen} and \eqref{eqn:Rtaugen} (with $2k\to 2k-2$) equals
\begin{equation}\label{eqn:RHk}
%R_{2k-2,\mathfrak{z}} \left(\sum_{j=0}^{n} \binom{n}{j} (-y)^{j-n}H_{2k-2,j}(\mathfrak{z},z)\right)+R_{2-2k+2n,z}\left(\sum_{j=0}^{n} \binom{n}{j} (-y)^{j-n}H_{2k,j}(\mathfrak{z},z)\right)\\
%=\sum_{l=0}^{\infty}\sum_{M\in \Gamma_{\infty}\backslash \SL_2(\Z)} \left(\left(-n\left(\frac{1}{y}+\frac{1}{\mathfrak{z}_2}\right) +\frac{2k-2}{\mathfrak{z}_2}+\frac{2-2k+2n}{y}\right)\left(\frac{1}{\mathfrak{z}_2}-\frac{1}{y}\right)^{n}e^{-2\pi i l\mathfrak{z}}\right)\bigg|_{2k,\mathfrak{z}} M e^{2\pi i l z}\\ =
(2k-2-j)\sum_{n=0}^{\infty}\sum_{M\in \Gamma_{\infty}\backslash \SL_2(\Z)} \left(\left(\frac{1}{\mathfrak{z}_2}-\frac{1}{y}\right)^{j
+1}e^{-2\pi i n\mathfrak{z}}\right)\bigg|_{2k,\mathfrak{z}} M e^{2\pi i n z},
\end{equation}
which is up to the constant $2k-2-j$ the right-hand side of \eqref{eqn:1/v-1/ypow} (and hence the left-hand side of \eqref{eqn:Hkn}) with $j\mapsto j+1$.  

We next apply raising in $\mathfrak{z}$ and $z$ to the right-hand side of \eqref{eqn:Hkn}.  Shifting $2k\mapsto 2k-2$ and $l\mapsto l-1$ and then applying $R_{2k-2,\mathfrak{z}}$ to it yields
\begin{equation}\label{eqn:RtauRgen}
%\sum_{j=0}^{n}\frac{(2k-j-n-4)!}{(2k-j-3)!}(2k-2j-3)\binom{n}{j} R_{2k-2-2j,\mathfrak{z}}^{j+1} \circ R_{4-2k+2j,z}^{n-j} \left(H_{2k-2-2j}(\mathfrak{z},z)\right)\\ =
\sum_{l=1}^{j+1}
c_{k-1,l-1,j}
\binom{j}{l-1} R_{2k-2l,\mathfrak{z}}^{l} \circ R_{2-2k+2l,z}^{j+1-l} \left(H_{2k-2l}(\mathfrak{z},z)\right).
\end{equation}
Applying $R_{2-2k+2j,z}$ to the right-hand side of \eqref{eqn:Hkn}, we obtain
\begin{equation}\label{eqn:RzRgen}
\sum_{l=0}^{j}c_{k,l,j}\binom{j}{l} R_{2k-2l,\mathfrak{z}}^{l} \circ R_{2-2k+2l,z}^{j+1-l} \left(H_{2k-2l}(\mathfrak{z},z)\right).
\end{equation}
Since $c_{k,l,j+1}=c_{k,l,j}/(2k-l-j-2)=c_{k-1,l-1,j}/(2k-l-1)$, the sum of \eqref{eqn:RtauRgen} and \eqref{eqn:RzRgen} (note that the sums in $l$ can be extended because the binomial coefficients vanish for the extra terms) then equals 
\begin{multline}\label{eqn:Rtosimplify}
\sum_{l=0}^{j+1}c_{k,l,j+1}\left( (2k-l-1)\binom{j}{l-1}+ (2k-l-j-2) \binom{j}{l}\right)\\
\times R_{2k-2l,\mathfrak{z}}^{l} \circ R_{2-2k+2l,z}^{j+1-l} \left(H_{2k-2l}(\mathfrak{z},z)\right).
\end{multline}
We then rewrite \eqref{eqn:Rtosimplify} using \eqref{eqn:binom} (with $2k\mapsto 2k-2n-2$ and then $j\mapsto l$ and $n\mapsto j$).  This yields $2k-2-j$ times the right-hand side of \eqref{eqn:Hkn} with $j\mapsto j+1$.  Comparing with \eqref{eqn:RHk} gives that \eqref{eqn:Hkn} holds for $\mathfrak{z}_2>\max(y,1/y)$.
\end{proof}

\begin{proof}[Proof of Theorem \ref{thm:E2^n}]
By Proposition \ref{prop:Hkn} and definition \eqref{eqn:Hkncompdef}, the left-hand side of \eqref{eqn:E2gen} is modular of weight $2-2k+2j$, while this is also true for the right-hand side by construction.  In order to prove the identity, we show that the difference of both sides is an element of $\H_{2-2k+2j}^{\operatorname{cusp}}$ as a function of $z$ and then prove that it has no principal part, implying that it is identically zero.

To see that that the difference is harmonic, we apply $\Delta_{2-2k+2j,z}$ to both sides.  Rewriting $\Delta_{2-2k+2j,z}$  with \eqref{eqn:LR} and recalling that  $\sum_{m=1}^r a_{m}\mathcal{H}_{2k}\left(\tau_{m},z\right)$ is meromorphic by assumption, \eqref{eqn:E2lower} implies that
\begin{equation}\label{eqn:E2genDelRHS}
\Delta_{2-2k+2j,z}\left(\frac{\pi}{3} \widehat{E}_2^j(z) \sum_{m=1}^r a_{m}H_{2k}\left(\tau_{m},z\right)\right)= -jR_{-2k+2j,z}\left(\widehat{E}_2^{j-1}(z)\sum_{m=1}^r a_{m}H_{2k}\left(\tau_{m},z\right)\right).
\end{equation}

We then substitute \eqref{eqn:Hkncompdef} into the left-hand side of \eqref{eqn:E2gen} in order to compute the image of $\Delta_{2-2k+2j,z}$ 
on \eqref{eqn:E2gen}
 which hence equals
\begin{equation}\label{eqn:E2genDelLHS1}
%\Delta_{2-2k+2n,z}\left(\sum_{j=0}^{n} \binom{n}{j} (-y)^{j-n}\sum_{l=1}^r a_{l} \mathcal{H}_{2k,j}\left(\tau_{l},z\right)\right)\\
\sum_{l=0}^{j}
c_{k,l,j}
\binom{j}{l} \sum_{m=1}^ra_{m} R_{2k-2l,\mathfrak{z}}^l \left[ \Delta_{2-2k+2j,z}\left(R_{2-2k+2l,z}^{j-l} \left(\mathcal{H}_{2k-2l}\left(\mathfrak{z},z\right)\right)\right)\right]_{\mathfrak{z}=\tau_{m}}.
\end{equation}

We next simplify \eqref{eqn:E2genDelLHS1} by showing that $R_{2-2k+2l,z}^{j-l} \left(\mathcal{H}_{2k-2l}\left(\mathfrak{z},z\right)\right)$ is an eigenfunction under $\Delta_{2-2k+2j,z}$. For this, note that Proposition \ref{prop:modulartau} yields that for every $\kappa$ the function $z\mapsto \mathcal{H}_{\kappa}(\mathfrak{z},z)\in\H_{2-\kappa}^{\operatorname{cusp}}$, so in particular it is annihilated by $\Delta_{2-\kappa,z}$.  However, for any weak Maass form $\mathcal{F}$ of weight $\kappa$ with eigenvalue $\lambda$ we have 
\begin{equation}\label{eqn:DelEV}
\Delta_{\kappa+2}\left(R_{\kappa}(\mathcal{F})\right) = \left(\lambda+\kappa\right) R_{\kappa}(\mathcal{F}).
\end{equation}
Evaluating $\sum_{n=1}^{j-l} (2-2k+2l+2(n-1))=(j-l)(j-2k+l+1)$  hence gives
$$
\Delta_{2-2k+2j,z}\left( R_{2-2k+2l,z}^{j-l} \left( \mathcal{H}_{2k-2l}(\mathfrak{z},z)\right)\right) = (j-l)(j-2k+l+1) R_{2-2k+2l,z}^{j-l} \left( \mathcal{H}_{2k-2l}(\mathfrak{z},z)\right).
$$
Using $\binom{j}{l}(j-l) = j\binom{j-1}{l}$ and $c_{k,l,j-1}=(2k-l-j-1)c_{k,l,j}$ then yields that \eqref{eqn:E2genDelLHS1} equals 
$$
%\begin{multline*}
%\Delta_{2-2k+2n,z}\left(\sum_{j=0}^{n} \binom{n}{j} (-y)^{j-n}\sum_{l=1}^r a_{l}\mathcal{H}_{2k,j}\left(\tau_{l},z\right)\right)\\
% =
-jR_{-2k+2j,z}\Bigg(\sum_{l=0}^{j-1}c_{k,l,j-1}\binom{j-1}{l}R_{2k-2l,\mathfrak{z}}^l \!\!\left[R_{2-2k+2l,z}^{j-1-l} \!\left(\sum_{m=1}^r a_{m}\mathcal{H}_{2k-2l}(\mathfrak{z},z)\right)\right]_{\mathfrak{z}=\tau_{m}}\!\Bigg).
$$
%\end{multline*}
We use \eqref{eqn:Hkncompdef} and then induction on $j$ in \eqref{eqn:E2gen} to rewrite this as 
\begin{multline}\label{eqn:E2genDelLHS}
-j R_{-2k+2j,z}\left( \sum_{l=0}^{j-1} \binom{j-1}{l} (-y)^{l+1-j}\sum_{m=1}^r a_{m} \mathcal{H}_{2k,l}\left(\tau_{m},z\right)\right)\\
=-j\left(\frac{\pi}{3}\right)^{j-1} R_{-2k+2j,z}\left(\widehat{E}_2^{j-1}(z)\sum_{m=1}^r a_{m} H_{2k}\left(\tau_{m},z\right)\right).
\end{multline}

Combining \eqref{eqn:E2genDelLHS} and \eqref{eqn:E2genDelRHS}, we see that the difference between the left-hand and right-hand sides of \eqref{eqn:E2gen} is a weight $2-2k+2j<0$ polar harmonic Maass form, while \eqref{eqn:xiH2} implies that it is an element of $\H_{2-2k+2j}^{\operatorname{cusp}}$.  
In order to prove the identity, it remains to show that the meromorphic part of this difference has no singularities in $\H$.  Rewriting
 the meromorphic part of the left-hand side of \eqref{eqn:E2gen} as the right-hand side of \eqref{eqn:1/v-1/ypow}, we see that it is bounded at each $z=\tau_{m}$.  The right-hand side of \eqref{eqn:E2gen} is also bounded at each $z=\tau_m$, since $\widehat{E}_2^j$ has a zero of order $j$ at each $\tau_{m}$, while $H_{2k}(\tau_{m},z)$ has only a simple pole at $z=\tau_{m}$.  Hence the difference is a negative-weight harmonic Maass form with no singularities, and thus vanishes identically, proving \eqref{eqn:E2gen}.

To obtain \eqref{eqn:HkjE2pow}, first note that the $j=0$ case is trivial.  We then apply induction to assume 
the first identity in \eqref{eqn:HkjE2pow} for $l<j$.
Rearranging \eqref{eqn:E2gen} and applying induction to solve for $\sum_{m=1}^r a_{m}\mathcal{H}_{2k,j}\left(\tau_{m},z\right)$, we see that 
\begin{equation}\label{eqn:tolower}
\sum_{m=1}^r a_{m}\mathcal{H}_{2k,j}\left(\tau_{m},z\right)= -\sum_{l=0}^{j-1} \binom{j}{l} (-y)^{l-j}\sum_{m=1}^r a_{m} H_{2k,l}\left(\tau_{m},z\right) +\left(\frac{\pi}{3}\right)^j \widehat{E}_2^j(z) \sum_{m=1}^r a_{m}H_{2k}\left(\tau_{m},z\right)
\end{equation}
is an almost meromorphic modular form.   By \eqref{eqn:E2genDelRHS} and the fact that $
z\mapsto
H_{2k,l}(\mathfrak{z},z)$ is meromorphic, lowering applied to the right-hand side of \eqref{eqn:tolower} yields
$$
j\sum_{l=0}^{j-1} \binom{j-1}{l}(-y)^{l-j+1}\sum_{m=1}^r a_{m} H_{2k,l}\left(\tau_{m},z\right)
 -j\left(\frac{\pi}{3}\right)^{j-1} \widehat{E}_2^{j-1}(z)\sum_{m=1}^r a_{m}H_{2k}\left(\tau_{m},z\right)=0,
$$
where we used \eqref{eqn:E2gen} (with the first identity in \eqref{eqn:HkjE2pow} inductively plugged in) in the last step.  By \eqref{eqn:tolower}, it follows that $\sum_{m=1}^r a_{m}\mathcal{H}_{2k,j}\left(\tau_{m},z\right)$ is meromorphic.  Denoting
$$
g_{l,j}(z):=\sum_{m=1}^r a_m 
c_{k,l,j}
\binom{j}{l} R_{2k-2l,\mathfrak{z}}^l \left[R_{2-2k+2l,z}^{j-l} \left(\mathcal{H}_{2k-2l}^-(\mathfrak{z},z)\right)\right]_{\mathfrak{z}=\tau_m}
$$
and noting that $
z\mapsto H_{2k,j}(\mathfrak{z},z)
$ is meromorphic, Proposition \ref{prop:Hkn} hence implies that 
\begin{equation}\label{eqn:Hkjdiff}
z\mapsto 
\sum_{m=1}^r a_{m}\mathcal{H}_{2k,j}(\tau_m,z)-\sum_{m=1}^r a_{m}H_{2k,j}(\tau_m,z)=\sum_{l=0}^{j}g_{l,j}(z)
\end{equation}
is meromorphic.  We next show that the right-hand side of \eqref{eqn:Hkjdiff} is zero, implying the first identity in \eqref{eqn:HkjE2pow}.  To do so, we first recognize each $g_{l,j}$ as an eigenfunction under $\Delta_{2-2k+2j,z}$, so that \eqref{eqn:Hkjdiff} may be seen as a decomposition into eigenspaces.  The projection of \eqref{eqn:Hkjdiff} into each eigenspace is then simultaneously an eigenfunction under $\Delta_{2-2k+2j,z}$ and meromorphic, from which we conclude 
that 
it is zero unless the eigenvalue is zero.  The projection to the eigenspace with eigenvalue zero we then show is both meromorphic and also the non-meromorphic part of a polar harmonic Maass form, implying that it vanishes.

To realize \eqref{eqn:Hkjdiff} as a decomposition into eigenspaces, we recall that since $\mathcal{H}_{2k-2l}^-$ is annihilated by $\Delta_{2-2k+2l,z}$, \eqref{eqn:DelEV} implies that $g_{l,j}$ is an eigenfunction under $\Delta_{2-2k+2j,z}$ with eigenvalue
$$
\alpha_{l,j}:=\sum_{n=0}^{j-l} \left(2-2k+2l+2(n-1)\right)=(j-l)(j-2k+l+1).
$$
Since $\sum_{l=0}^jg_{l,j}$ is meromorphic, it is in particular annihilated by $\Delta_{2-2k+2j,z}$.  Thus, for every $0\leq l_0\leq j$, the function 
$$
\prod_{\substack{0\leq l\leq j\\ \alpha_{l,j}\neq \alpha_{l_0,j}}}\left(\Delta_{2-2k+2j,z}-\alpha_{l,j}\right)\left(\sum_{l=0}^jg_{l,j}(z)\right) = \prod_{\substack{0\leq l\leq j\\ \alpha_{l,j}\neq \alpha_{l_0,j}}}\left(\alpha_{l_0,j}-\alpha_{l,j}\right)\sum_{\substack{0\leq l\leq j\\ \alpha_{l,j}=\alpha_{l_0,j}}}g_{l,j}(z)
$$
is both meromorphic (and hence in particular annihilated by $\Delta_{2-2k+2j,z}$) and an eigenfunction with eigenvalue $\alpha_{l_0,j}$ under $\Delta_{2-2k+2j,z}$.  If $\alpha_{l_0,j}\neq 0$,  then we conclude that 
$$
\sum_{\substack{0\leq l\leq j\\ \alpha_{l,j}=\alpha_{l_0,j}}} g_{l,j}(z)=0.
$$
Hence 
$$
\sum_{l=0}^j g_{l,j}=\sum_{\substack{0\leq l\leq j\\ \alpha_{l,j}=0}} g_{l,j},
$$
but $\alpha_{l,j}=0$ if and only if $l=j$ or $l=2k-j-1$.  Since $l\leq j<k-1$, we have $l<2k-j-2$, and thus the second case cannot occur.  Therefore
$$
\sum_{l=0}^j g_{l,j}(z)=g_{j,j}(z)=\sum_{m=1}^ra_m
c_{k,j,j}
R_{2k-2j,\mathfrak{z}}^j \left[\mathcal{H}_{2k-2j}^-(\mathfrak{z},z)\right]_{\mathfrak{z}=\tau_m}
$$
is meromorphic.  But by Proposition \ref{prop:modulartau}, $g_{j,j}$ is the non-meromorphic part of the polar harmonic Maass form
$$
\sum_{m=1}^ra_m
c_{k,j,j} R_{2k-2j,\mathfrak{z}}^j \left[\mathcal{H}_{2k-2j}(\mathfrak{z},z)\right]_{\mathfrak{z}=\tau_m}.
$$
Since the non-meromorphic part of a polar harmonic Maass form can only be meromorphic if it is zero, we conclude that the right-hand side of \eqref{eqn:Hkjdiff} is zero, implying the first identity in \eqref{eqn:HkjE2pow}.   

Substituting the first identity in \eqref{eqn:HkjE2pow} into \eqref{eqn:E2gen} and comparing the mermorphic parts on both sides yields the second identity in \eqref{eqn:HkjE2pow}.  

\end{proof}
We finally prove Theorem \ref{thm:E2^n/E10}.
\begin{proof}[Proof of Theorem \ref{thm:E2^n/E10}]

Since $1/E_{10}(z)=a_1H_{12}(i,z)+a_2H_{12}(\rho,z)$ is meromorphic (write $a_1$ and $a_2$ as in \eqref{a12}), 
\eqref{eqn:HkjE2pow} yields
$$
\left(\frac{\pi}{3}\right)^n\frac{E_2^n
(z)
}{E_{10}
(z)
}=\left(\frac{\pi}{3}\right)^nE_2^n (z) \left(a_1H_{12}(i,z)+a_2H_{12}(\rho,z)\right) =a_1H_{
12
,n}(i,z)+a_2H_{12,n}(\rho,z).
$$
Plugging in Theorem \ref{genFC2}, we then obtain the desired result.
\end{proof}

\end{document}